\newcommand{\rrvert}{\vert}
\newcommand{\llvert}{\vert}
\def\mathbbm{\mathbb}
\newcommand{\eqref}[1]{(\ref{#1})}
\newcommand{\iint}{\int\!\!\!\int}
\newcommand{\defeq}{\mathrel{\mathop:}=}
\newcommand{\eqdef}{=\mathrel{\mathop:}}
\newcommand{\F}{\mathcal{F}}
\newcommand{\uarg}{ \cdot}
\newcommand{\ud}{\mathrm{d}}
\newcommand{\R}{\mathbb{R}}
\newcommand{\N}{\mathbb{N}}
\renewcommand{\P}{\mathbb{P}}
\newcommand{\E}{\mathbb{E}}
\newcommand{\B}{\mathcal{B}}
\newcommand{\esssup}{\mathop{\mathrm{ess\,sup}}}
\newcommand{\given}{:}
\newcommand{\Gap}{\mathop{\operatorname{Gap}}}
\newcommand{\var}{\operatorname{var}}
\newtheorem{theorem}{Theorem}
\newtheorem{corollary}[theorem]{Corollary}
\newtheorem{proposition}[theorem]{Proposition}
\newtheorem{lemma}[theorem]{Lemma}
\begin{document}
\begin{frontmatter}

\title{Convergence properties of pseudo-marginal Markov chain Monte
Carlo algorithms}
\runtitle{Convergence properties of pseudo-marginal MCMC}

\begin{aug}
\author[A]{\fnms{Christophe}~\snm{Andrieu}\thanksref{T1}\ead[label=e1]{C.Andrieu@bristol.ac.uk}}
\and
\author[B]{\fnms{Matti}~\snm{Vihola}\corref{}\thanksref{T2}\ead[label=e2]{matti.vihola@iki.fi}}
\thankstext{T1}{Supported in part by an EPSRC advanced research
fellowship and a Winton Capital research award.}
\thankstext{T2}{Supported by the Academy of Finland (project 250575) and by the
   Finnish Academy of Science and Letters, Vilho, Yrj{\"o} and Kalle
   V{\"a}is{\"a}l{\"a} Foundation.}
\runauthor{C.~Andrieu and M.~Vihola}
\affiliation{University of Bristol and University of Jyv{\"a}skyl{\"a}}
\address[A]{School of Mathematics\\
University of Bristol\\
University Walk\\
Bristol, BS8 1TW\\
United Kingdom\\
\printead{e1}}
\address[B]{Department of Statistics\\
University of Oxford\\
1 South Parks Road\\
Oxford\\
OX1 3TG\\
United Kingdom\\
\printead{e2}}
\end{aug}

\received{\smonth{6} \syear{2012}}
\revised{\smonth{3} \syear{2014}}

%
\begin{abstract} 
We study convergence properties of pseudo-marginal Markov\break chain Monte
Carlo algorithms (Andrieu and Roberts [\textit{Ann.~Statist.}
\textbf{37} (2009) 697--725]). We find that the
asymptotic variance of the pseudo-marginal algorithm is always at
least as large as that of the marginal algorithm. We show that if the
marginal chain admits a (right) spectral gap and the weights (normalised
estimates of the target density) are uniformly bounded, then the
pseudo-marginal chain has a spectral gap. In many cases, a similar
result holds for the absolute spectral gap, which is equivalent to
geometric ergodicity. We consider also unbounded weight distributions
and recover polynomial convergence rates in more specific cases, when
the marginal algorithm is uniformly ergodic or an independent
Metropolis--Hastings or a random-walk Metropolis targeting a
super-exponential density with regular contours. Our results on
geometric and polynomial convergence rates imply central limit
theorems. We also prove that under general conditions, the asymptotic
variance of the pseudo-marginal algorithm converges to the asymptotic
variance of the marginal algorithm if the accuracy of the estimators
is increased.
\end{abstract}
%

%
\begin{keyword}[class=AMS]
\kwd[Primary ]{65C40}
\kwd[; secondary ]{60J05}
\kwd{65C05}
\end{keyword}

\begin{keyword}
\kwd{Asymptotic variance}
\kwd{geometric ergodicity}
\kwd{Markov chain Monte Carlo}
\kwd{polynomial ergodicity}
\kwd{pseudo-marginal algorithm}
\end{keyword}
%
\end{frontmatter}

\section{Introduction} 

Assume that one is interested in sampling from a probability
distribution $\pi$ defined on some measurable space
$(\mathsf{X},\B(\mathsf{X}))$. One practical recipe to achieve
this in complex scenarios consists of using Markov chain Monte Carlo
(MCMC) methods, of which the Metropolis--Hastings update is the main
workhorse \cite{metropolis,hastings}.
We may write the Markov kernel related to a
Metropolis--Hastings algorithm in the form
%
\begin{equation}
P(x,\ud y)\defeq\min \bigl\{ 1,r(x, y) \bigr\} q(x, \ud y)+\delta_{x}(
\ud y)\rho(x), \label{eq:marginal-kernel}
\end{equation}
where $r(x,y)$ is the Radon--Nikodym derivative as defined in
\cite{tierney-note}
%
\begin{equation}
r(x, y) \defeq \frac{\pi(\ud y)q(y, \ud x) }{\pi(\ud x) q(x, \ud y)}
\quad \mbox{and}\quad \rho(x) \defeq1- \int\min\bigl
\{1,r(x,y)\bigr\} q(x,\ud y),\hspace*{-10pt} \label{eq:r-and-rho}
\end{equation}
where $q$ is the so-called proposal kernel (or proposal distribution).
We follow the terminology of \cite{andrieu-roberts} and call this method
the \emph{marginal algorithm}.

In some situations, the marginal algorithm cannot be implemented due
to the intractability of the distribution $\pi$. For example, assuming
that $\pi$ and $q$ have densities (also denoted $\pi$ and $q$) with
respect to some $\sigma$-finite measure, it may be that $\pi$ cannot be
evaluated point-wise, and although $r(x,y)$ may be well defined
theoretically, it cannot be evaluated either. However, in some situations
unbiased nonnegative estimates $\hat{\pi}(x) = W_x\pi(x)$ may be
available; that is, $W_x\sim Q_x(\uarg)\ge0$ and $\E[W_x]=1$ for any
$x\in\mathsf{X}$ (we will refer to $W_x$ as a ``weight'' throughout the
paper). A naive idea may be to use such estimates in place of the true
values in order to compute the acceptance probability. A remarkable
property is that such an algorithm is in fact
correct \cite{andrieu-roberts}.
This can be seen by considering the following probability
distribution:
%
\begin{equation}
\tilde{\pi}(\ud x, \ud w)\defeq \pi(\ud x)\pi_{x}(\ud w) \qquad\mbox{with }
\pi_x(\ud w) \defeq Q_{x}(\ud w)w \label{eq:pseudo-target}
\end{equation}
on the product space $(\mathsf{X}\times\mathsf{W},
\B(\mathsf{X})\times\B(\mathsf{W}))$ where
$\mathsf{W}$ is a Borel subset of $\mathbb{R}_{+}$ and
$\B(\mathsf{W})$ are the Borel
sets on $\mathsf{W}$.
Here $\pi_x(\ud w)$
is a probability measure for each
$x\in\mathsf{X}$, and therefore $\pi$ is a marginal distribution of
$\tilde{\pi}$.

It is possible to implement a Metropolis--Hastings algorithm targeting
$\tilde{\pi}(\ud x, \ud w)$ using a proposal kernel
$\tilde{q}(x,w; \ud y, \ud u) \defeq q(x, \ud y)Q_{y}(\ud u)$ by
defining
%
\begin{eqnarray}\label{eq:def-pseudo-kernel}
&&\tilde{P}(x,w; \ud y, \ud u)
\nonumber
\\[-8pt]
\\[-8pt]
\nonumber
&&\qquad \defeq\min \biggl\{1,r(x, y)\frac{u}{w}
\biggr\} q(x, \ud y)Q_{y}(\ud u ) +\delta_{x,w}(\ud y, \ud
u)\tilde{\rho}(x,w),
\end{eqnarray}
where the probability of rejection is given as
\[
\tilde{\rho}(x,w) \defeq 1 - \iint\min \biggl\{1,r(x, y)\frac{u}{w} \biggr
\} q(x, \ud y)Q_{y}(\ud u ).
\]
This is the \emph{pseudo-marginal algorithm} \cite{andrieu-roberts},
which targets $\pi$ marginally since it is a marginal distribution of
$\tilde{\pi}$, and may be implemented in situations where the marginal
algorithm may not. As a particular instance of the Metropolis--Hastings
algorithm, the pseudo-marginal
algorithm converges to $\tilde{\pi}$ under mild assumptions
(e.g., \cite{nummelin-mcmcs}), and although it may be seen as a ``noisy'' version
of the marginal algorithm, it is exact since it allows us to target
the distribution of interest $\pi$.
The aim of this paper is to study some of the theoretical properties
of such algorithms in terms of the properties of the
weights and those of the marginal algorithm. More precisely we
investigate the rate of convergence of the pseudo-marginal algorithm to
equilibrium and characterise
the approximation of the marginal algorithm by the pseudo-marginal
algorithm in terms of the variability
of their respective ergodic averages.

The apparently abstract structure of the pseudo-marginal algorithm is
in fact shared
by several practical algorithms which have recently been proposed in
order to sample from
intractable distributions. The distribution of $w$ is most often
implicit, as we illustrate now with one of the simplest examples.
Assume for simplicity that the space $\mathsf{X}$ is (a Borel subset of)
$\R^d$, and $\B(\mathsf{X})$ consists of the Borel subsets of
$\mathsf{X}$ and that both $\pi$ and $q(x,\cdot)$ (for any $x\in
\mathsf
{X}$) have densities with respect to the Lebesgue
measure. Consider a situation where the target density is of the form
$\pi(x) = \int\pi(x,z) \,\ud z$ where the integral cannot be
computed analytically. One can suggest approximating this density with
an importance sampling estimate of the integral,
%
\begin{equation}
W_x\pi(x) = \hat{\pi}(x) = \frac{1}{N}\sum
_{n=1}^N \frac{\pi(x, Z_k)}{h_x(Z_k)},\qquad Z_k\sim
h_x(\uarg)\mbox{ independently}, \label{eq:is-w}
\end{equation}
where $h_x$ is a probability density for each $x\in\mathsf{X}$. Note
that it is in fact possible to consider unbiased estimators up to a
normalising constant since such a constant cancels in the acceptance
ratio of the pseudo-marginal algorithm, and without loss of generality,
we will assume this constant to be equal to one throughout.
This setting was considered by Beaumont in the seminal paper
\cite{beaumont} and various extensions proposed in \cite{andrieu-roberts}.
There are more involved applications of this idea. In the context of
state-space models, it has been shown in
\cite{andrieu-doucet-holenstein} that $W_x$ can be obtained with a
particle filter---resulting in ``particle MCMC'' algorithms. In
\cite{beskos-papaspiliopoulos-roberts-fearnhead} it was shown how
exact sampling methods can be used to carry out inference in
discretely observed diffusion models for which the transition
probability is intractable. See also the discussion
\cite{lee-andrieu-doucet} on the connection with pseudo-marginal MCMC
and approximate Bayesian computation.

We now summarise our main findings, which are of two different types,
although some of their
underpinnings and consequences are related.
\subsection*{Rates of convergence} In previous work \cite
{andrieu-roberts} it has been shown
that a pseudo-marginal chain is uniformly ergodic whenever
the marginal algorithm targeting $\pi(x)$ is uniformly ergodic, and the
weights are bounded uniformly in $x$. It was also shown that geometric
ergodicity
is not possible as soon as the weights $W_x$ are unbounded on a set
of positive $\pi$-probability.
We extend the analysis of the convergence
rates of pseudo-marginal algorithms in several directions.

In Section~\ref{13131313131131311}, we show that if the marginal chain admits a
nonzero (right) spectral gap, and the weights are bounded uniformly in $x$,
then the pseudo-marginal chain has also a nonzero spectral gap. Our
proof relies on an explicit lower bound on the spectral gap
(Propositions \ref{prop:gap-p-vs-pbar} and \ref{prop:pseudo-bar-p}).
Our results imply that geometric ergodicity
of a marginal algorithm is inherited by the pseudo-marginal chain
as soon as the weights are uniformly bounded,
either through a slight modification (Remark~\ref{rem:laziness})
or directly in many cases by observing that the pseudo-marginal Markov operator
is positive (Proposition~\ref{prop:positivity}).

We also restate in a more explicit form a result of Andrieu and Roberts
\cite{andrieu-roberts} which establishes the necessity of the
existence of a function $\bar{w}\dvtx\mathsf{X}\rightarrow[0,\infty)$
such that $Q_x ([0,\bar{w}(x)] )=1$ for the geometric ergodicity
of pseudo-marginal algorithms to hold. Assuming that $Q_x$ has
positive mass in any neighbourhood of $\bar{w}(x)$, we show through
specific examples that $\sup_{x\in\mathsf{X}}\bar{w}(x)<\infty$
may in
some cases be a necessary condition for geometric ergodicity of a
pseudo-marginal algorithm to hold (second part of Remark~\ref{rem:rwm-geometrically-ergodic}) while in other situations the
existence of such a uniform upper bound is not a requirement (Remark~\ref{rem:pm-imh-uniformly-ergodic} and the first part of Remark~\ref{rem:rwm-geometrically-ergodic}). Intuitively, the latter will
correspond to situations where the marginal algorithm possesses some
robustness properties which allow it to counter, up to a limit, the
perturbations brought in by the pseudo-marginal approximation.

In Section~\ref{151515151515} we consider the particular case where the
pseudo-marginal algorithm is an
independent Metropolis--Hastings (IMH) algorithm. The primary interest
of this example is pedagogical,
since the corresponding pseudo-marginal implementation is also an IMH,
which lends itself to a straightforward,
yet very instructive, analysis. For example it allows us to establish
that the existence of
(not necessarily uniformly bounded) moments for the weights
leads to polynomial convergence rates, while the existence of
exponential moments leads to sub-exponential rates.

In the light of this pedagogical example, we pursue our analysis by
considering more general scenarios where the
supports of the weight distributions may be unbounded, that is, such
that on some
set of positive $\pi$-probability $Q_x ([0,\bar{w}] )<1$ for any
$\bar{w}<\infty$, implying that the corresponding pseudo-marginal
algorithms cannot be geometric.

In Section~\ref{sec:uniform-marginal}, we only assume that the
marginal algorithm is uniformly ergodic (together with a mild
additional condition) and that the weight distributions are uniformly
integrable. We establish the existence of a Lyapunov function
satisfying a sub-geometric drift condition toward a small set
(Proposition~\ref{prop:w-drift} and Lemma~\ref{lem:uniform-marginal-small-set}). In particular, if the weight
distributions possess finite power moments, we establish polynomial
ergodicity (Corollary~\ref{cor:poly-drift}).

In Section~\ref{171717171717} we consider the popular random-walk
Metropolis (RWM).
Assuming standard tail conditions on $\pi$ which ensure the geometric
ergodicity of the RWM \cite{jarner-hansen} and the existence of uniformly
bounded moments we show that the corresponding pseudo-marginal algorithm
is polynomially ergodic (Theorem~\ref{thm:rwm-bounded-moments}). We
extend this result to the situation where moments of the weights are
assumed to exist but are not
necessarily uniformly bounded in $x$ (i.e., we allow
them to grow in the tails of $\pi$) in Theorem~\ref{thm:rwm-unbounded-moments}. We note in Remark~\ref{rem:rwm-geometrically-ergodic}
that one of the intermediate
results (Lemma~\ref{rem:rwm-geometrically-ergodic}) in fact implies
the existence of a geometric drift when $Q_x ([0,\bar{w}(x)]
)=1$ for
some appropriate function $\bar{w}$, possibly divergent in the tails of
$\pi$, which is a consequence of the fast vanishing assumptions
on the tails of $\pi$.

\subsection*{Asymptotic variance}

It is natural to compare the asymptotic performance of ergodic
averages obtained from a marginal algorithm and its pseudo-marginal
counterpart. One can in fact ask a more general question of practical
relevance. In practice, it is often possible to choose the weight
distributions $Q_x$ from a family $\{Q_x^N\}_{N\in\N}$ indexed by an
accuracy parameter $N$, as for example in \eqref{eq:is-w}. In such
situations $\pi_x^N(\ud w)=Q_x^N(\ud w)w$ converge weakly to
$\delta_1(\ud w)$ as $N\to\infty$, and one may wonder if the asymptotic
variance of the corresponding ergodic averages converge to that of the
marginal algorithm.

In Section~\ref{12121212121212121} we first show that the pseudo-marginal and
marginal algorithms are ordered both in terms of the mean acceptance
probability (Corollary~\ref{cor:acc-prob-order}) and the asymptotic
variance (Theorem~\ref{thm:as-var-order}). The latter result relies on
a generalisation of the argument due to Peskun
\cite{peskun,tierney-note}, which may be of independent interest.
This supports and generalises the empirical observation on
examples that the pseudo-marginal algorithm cannot be more efficient
than its marginal version.

When the weights are uniformly bounded in $x$, we start Section~\ref
{141414141414} with a simple upper bound on the asymptotic variance of
the pseudo-marginal algorithm (Corollary~\ref{cor:autocorr-geom}) from
which it is straightforward to deduce that it converges to that of the
marginal when the weight upper bound goes to one. We generalise this
result to the situation where the weights are unbounded, but
$\pi_x^N(\ud w)$ converges weakly to $\delta_1(\ud w)$ as
$N\rightarrow\infty$ (Theorem~\ref{th:autocorr-conv-coupling}). We
also show how the sub-geometric ergodicity results proved earlier are
essential to establish the conditions of this theorem in practice
(Proposition~\ref{prop:drift-implies-tailcond}).

We conclude in Section~\ref{sec:conclusion} where we briefly discuss
additional implications of our results such as the existence of
central limit theorems, the possibility to compute quantitative
expressions for the asymptotic variance, and the analysis of
generalisations of pseudo-marginal algorithms.


\section{Ordering of the marginal and pseudo-marginal algorithms}
\label{12121212121212121} 

We first introduce some standard notation related to probability measures
and Markov transition probabilities. For $\Pi$ a Markov kernel
and $\mu$ a probability measure defined on some measurable
space $(\mathsf{E},\B(\mathsf{E}))$ and $f$ a measurable
real-valued function on~$\mathsf{E}$, we let for any $x\in\mathsf{E}$,
$\Pi^0 f(x) \defeq f(x)$,
\[
\mu(f) \defeq\int f(x) \mu(\ud x) \quad\mbox{and}\quad \Pi^n f(x) \defeq\int
\Pi(x,\ud y) \Pi^{n-1}f(y)\qquad\mbox{for $n\ge1$.}
\]
We will also denote the inner product between two real-valued functions
$f$ and $g$ on $\mathsf{E}$ as $ \langle f, g  \rangle
_{\mu} \defeq\int f(x)
g(x) \mu(\ud x)$ and the associated norm
$\|f\|_{\mu} \defeq \langle f, f  \rangle_\mu^{1/2}$.

We start by a simple lemma, which plays a key role in the ordering of
the marginal and the pseudo-marginal algorithms.

%
\begin{lemma}
\label{lem:mean-acc-rate} 
For any $x,y\in\mathsf{X}$, we have
\[
\iint Q_{x}(\ud w)w Q_{y}(\ud u) \min \biggl\{ 1,r(x,y)
\frac{u}{w} \biggr\} \leq\min\bigl\{1,r(x,y)\bigr\}.
\]
\end{lemma}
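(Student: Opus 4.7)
The plan is to bound the integrand by the two obvious majorants of $\min\{1,ru/w\}$ (with $r\defeq r(x,y)$) and then use that $Q_x$ and $Q_y$ are weight distributions with unit mean, i.e.\ $\int w\, Q_x(\ud w)=\int u\, Q_y(\ud u)=1$.

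Concretely, I would first rewrite the integrand in the symmetric form
\[
    w\min\{1,ru/w\}=\min\{w,ru\},
\]
which is unambiguous for $w=0$ under the usual convention $0\cdot\infty=0$; this also justifies that the expression of interest is well defined without worrying about the event $\{w=0\}$. Then I would observe the two pointwise bounds $\min\{w,ru\}\le w$ and $\min\{w,ru\}\le ru$. Integrating the first bound against $Q_x(\ud w)Q_y(\ud u)$ and using $\int w\,Q_x(\ud w)=1$ and $\int Q_y(\ud u)=1$ yields an upper bound of $1$; integrating the second bound and using $\int Q_x(\ud w)=1$ together with $\int u\,Q_y(\ud u)=1$ yields an upper bound of $r$. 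Since the left-hand side is dominated by both $1$ and $r$, it is dominated by their minimum, which is the required inequality.

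There is no real obstacle here: the only mild point of care is the formal handling of $w=0$, which is immediately resolved by the rewriting $w\min\{1,ru/w\}=\min\{w,ru\}$. The proof then reduces to two applications of Fubini together with the normalisation $\E[W_x]=\E[W_y]=1$, and does not require anything beyond the definition of the weight distributions given in the introduction.
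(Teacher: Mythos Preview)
Your proof is correct. The paper's own argument is slightly different in presentation: it observes that $t\mapsto\min\{1,t\}$ is concave and applies Jensen's inequality with the probability measure $Q_x(\ud w)\,w\,Q_y(\ud u)$ to the random variable $r(x,y)\,u/w$, yielding
\[
\iint Q_x(\ud w)\,w\,Q_y(\ud u)\,\min\Big\{1,r(x,y)\tfrac{u}{w}\Big\}
\le \min\Big\{1,\iint Q_x(\ud w)\,w\,Q_y(\ud u)\,r(x,y)\tfrac{u}{w}\Big\}
= \min\{1,r(x,y)\}.
\]
Your approach is essentially the elementary unpacking of this Jensen step for the specific function $\min\{1,\cdot\}$: the two pointwise bounds $\min\{w,ru\}\le w$ and $\min\{w,ru\}\le ru$ are exactly what make $\min\{1,t\}$ lie below the supporting lines $1$ and $t$ at every point. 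The rewriting $w\min\{1,ru/w\}=\min\{w,ru\}$ is a nice touch that sidesteps any $w=0$ issue, whereas the paper implicitly relies on $Q_x(\ud w)\,w$ placing no mass at $w=0$. Both arguments are equally short; yours is marginally more self-contained, the paper's is marginally more conceptual.
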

%

\begin{pf} 
Notice that $t\mapsto\min\{1,t\}$ is a concave function. Therefore,
one can apply Jensen's inequality, with the probability measure
$Q_{x}(\ud w)wQ_{y}(\ud u)$, to get the desired inequality.
\end{pf}
%

In order to facilitate the comparison of $P$ and $\tilde{P}$ we follow
\cite{andrieu-roberts} and
introduce an auxiliary transition probability $\bar{P}$ which is
defined on the same space as the pseudo-marginal kernel $\tilde{P}$
and is reversible with respect to $\tilde{\pi}$, 
%
\begin{equation}
\bar{P}(x,w; \ud y, \ud u ) \defeq q(x, \ud y) \pi_{y}(\ud u) \min
\bigl\{1,r(x,y)\bigr\}+\delta_{x,w}(\ud y, \ud u) \rho(x). \label{eq:bar-p}
\end{equation}
Application of Lemma~\ref{lem:mean-acc-rate} leads to the generic result
below, which in turn implies
an order between the expected acceptance rates (Corollary~\ref
{cor:acc-prob-order})
and the asymptotic variances (Theorem~\ref{thm:as-var-order}) of the
marginal and pseudo-marginal algorithms.

%
\begin{proposition}
\label{prop:order-generic} 
Let $g\dvtx\mathsf{X}^2\to[0,\infty)$ be a symmetric measurable
function, that is, such that
$g(x,y)=g(y,x)$ for all $x,y\in\mathsf{X}$. Define
\begin{eqnarray*}
\Delta_{\bar{P}}(g) &\defeq&\int\tilde{\pi}(\ud x, \ud w) \int q(x,\ud y)
\pi_y(\ud u) \min\bigl\{1,r(x,y)\bigr\} g(x,y),
\\
\Delta_{\tilde{P}}(g) &\defeq&\int\tilde{\pi}(\ud x, \ud w) \int q(x,\ud y)
Q_y(\ud u) \min \biggl\{1,r(x,y)\frac{u}{w} \biggr\} g(x,y).
\end{eqnarray*}
Then we have $\Delta_{\bar{P}}(g) \ge\Delta_{\tilde{P}}(g)$ and
whenever these quantities are finite,
\[
\Delta_{\bar{P}}(g) - \Delta_{\tilde{P}}(g) \le\int\pi(\ud x)
Q_{x}(\ud w)\llvert w-1\rrvert \int q(x,\ud y) \min\bigl\{ 1,r(x,y)
\bigr\}g(x,y).
\]
\end{proposition}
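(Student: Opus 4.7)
My plan is to split the claim into its two parts. For $\Delta_{\bar{P}}(g)\ge\Delta_{\tilde{P}}(g)$, I would use $\int Q_y(\ud u)=1$ to rewrite $\Delta_{\bar{P}}(g)$ over the same product reference measure $\pi(\ud x)Q_x(\ud w)w\,q(x,\ud y)Q_y(\ud u)g(x,y)$ as $\Delta_{\tilde{P}}(g)$; Lemma~\ref{lem:mean-acc-rate} then applies pointwise in $(x,y)$ to the inner $w,u$-integral of the acceptance ratio, giving the inequality. Symmetry of $g$ is not needed at this stage.

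For the quantitative bound I would first establish the one-variable estimate
\[
\min\{1,r\}-\min\{1,rt\}\le \min\{1,r\}(1-t)^{+},\qquad r,t\ge 0,
\]
by a short case analysis: for $t\ge 1$ the left-hand side is non-positive while the right vanishes; for $t<1$ the three sub-cases $r\le 1$, $rt\le 1<r$, and $rt>1$ are each immediate, the middle one being the only one needing an argument ($1-rt\le 1-t$ when $r\ge 1$). Specialising to $r=r(x,y)$ and $t=u/w$ and multiplying by $w$ yields $w[\alpha(x,y)-\tilde{\alpha}(x,y,w,u)]\le \alpha(x,y)(w-u)^{+}$ with $\alpha\defeq\min\{1,r\}$ and $\tilde{\alpha}(x,y,w,u)\defeq\min\{1,r(x,y)u/w\}$. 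Integration against $\pi(\ud x)Q_x(\ud w)q(x,\ud y)Q_y(\ud u)g(x,y)$ together with the elementary sub-additivity $(w-u)^{+}\le(w-1)^{+}+(1-u)^{+}$ then gives
\[
\Delta_{\bar{P}}(g)-\Delta_{\tilde{P}}(g)\le \int\pi(\ud x)q(x,\ud y)\alpha(x,y)g(x,y)\bigl[\psi(x)+\phi(y)\bigr],
\]
where $\psi(x)\defeq\int Q_x(\ud w)(w-1)^{+}$ and $\phi(y)\defeq\int Q_y(\ud u)(1-u)^{+}$.

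The final step exploits detailed balance of the marginal chain together with the symmetry of $g$. Since $\pi(\ud x)q(x,\ud y)\alpha(x,y)$ is symmetric in $(x,y)$ and $g(x,y)=g(y,x)$, for any non-negative measurable $\varphi$ on $\mathsf{X}$ one has
\[
\int\pi(\ud x)q(x,\ud y)\alpha(x,y)g(x,y)\varphi(y)=\int\pi(\ud x)q(x,\ud y)\alpha(x,y)g(x,y)\varphi(x).
\]
Applying this with $\varphi=\phi$ transfers the $y$-side correction to the $x$ side, and the constraint $\int Q_x(\ud w)(w-1)=0$ forces $\psi(x)=\int Q_x(\ud w)(1-w)^{+}$, so $\psi(x)+\phi(x)=\int Q_x(\ud w)[(w-1)^{+}+(1-w)^{+}]=\int Q_x(\ud w)|w-1|$, which is precisely the stated bound.

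I expect the main obstacle to be pinning down the one-variable inequality in the second paragraph, as it is the only step that is not mere manipulation; the symmetry argument used to transfer the $y$-dependent factor onto the $x$ side is elegant but essentially mechanical once detailed balance and the symmetry of $g$ are invoked, and it is here that the symmetry hypothesis on $g$ is genuinely used.
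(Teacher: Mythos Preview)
Your proof is correct and follows essentially the same route as the paper. Your pointwise inequality $\min\{1,r\}-\min\{1,rt\}\le\min\{1,r\}(1-t)^{+}$ is just a rearrangement of the paper's $\min\{1,ab\}\ge\min\{1,a\}\min\{1,b\}$ (take $a=r$, $b=t$ and note $1-\min\{1,t\}=(1-t)^{+}$), and multiplying by $w$ gives the same bound $w(\alpha-\tilde\alpha)\le\alpha(w-u)^{+}$ that the paper obtains via $w\min\{1,u/w\}=\min\{u,w\}$. The paper then writes the correction as $\tfrac12|u-w|$ and bounds it by $\tfrac12(|1-u|+|1-w|)$, while you keep $(w-u)^{+}$ and bound it by $(w-1)^{+}+(1-u)^{+}$; after the detailed-balance symmetrisation these two intermediate bounds coincide, and both collapse to $\int Q_x(\ud w)\,|w-1|$ exactly as you observe using $\int Q_x(\ud w)(w-1)=0$.
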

%

\begin{pf} 
Denote $a(x,y,u,w) \defeq\min\{1,r(x,y)\} -
\min \{1,r(x,y)\frac{u}{w} \}$.
Since $\int\pi_y(\ud u) = 1 = \int Q_y(\ud u)$, we may write
for a bounded function $g$
\begin{eqnarray*}
\Delta_{\bar{P}}(g) - \Delta_{\tilde{P}}(g) &= &\int\pi(\ud x) q(x,\ud
y) g(x,y)
\int Q_x(\ud w)w Q_y(\ud u) a(x,y,u,w)
\\
&\ge&0,
\end{eqnarray*}
where the inequality is a consequence of Lemma~\ref{lem:mean-acc-rate}.
The general case follows by a truncation argument.

For the second bound, note that
$\min \{1,r(x,y) \frac{u}{w} \} \ge\min\{1,r(x,y)\}
\min \{1,\frac{u}{w} \}$ and $2\min\{u,w\} = u+w - |u - w|$,
and observe that $\Delta_{\tilde{P}}(g)$ can be lower bounded by
\begin{eqnarray*}
&& \int\pi(\ud x)q(x,\ud y)Q_{x}(\ud w) Q_{y}(
\ud u) \min \bigl\{ 1,r(x,y) \bigr\} \min\{u,w\} g(x,y)
\\
&&\qquad= \Delta_{\bar{P}}(g) \\
&&\qquad\quad{}- \frac{1}{2}\int\pi(\ud x)q(x,\ud
y)Q_{x}(\ud w) Q_{y}(\ud u) \min \bigl\{ 1,r(x,y) \bigr\}
|u-w| g(x,y)
\\
&&\qquad\ge\Delta_{\bar{P}}(g) - \int\pi(\ud x) Q_x(\ud w) |1-w| \int
q(x,\ud y) \min\bigl\{1,r(x,y)\bigr\} g(x,y),
\end{eqnarray*}
where the last inequality follows by the bound $|u-w|\le|1 -u | + |1 - w|$,
the symmetry of $g(x,y)$ and because
\[
\pi(\ud x) q(x,\ud y) \min\bigl\{1,r(x,y)\bigr\} = \pi(\ud y) q(y,\ud x) \min
\bigl\{1,r(y,x)\bigr\}. 
\]
\upqed\end{pf}

%
\begin{remark} 
The upper bound $|u-w|\le|1-w| + |1-u|$ used in Proposition~\ref
{prop:order-generic}
adds at most a factor of two, because $\int Q_x(\ud w) |u-w|
\ge|1-w|$.
\end{remark}
%

%
\begin{corollary}
\label{cor:acc-prob-order} 
Let us denote the expected acceptance rates of the marginal and the
pseudo-marginal algorithms as
\begin{eqnarray*}
\alpha_P &\defeq&\int\pi(\ud x) \int q(x,\ud y) \min\bigl\{1,r(x,y)
\bigr\},
\\
\alpha_{\tilde{P}} &\defeq&\int\tilde{\pi}(\ud x, \ud w) \int q(x,\ud y)
Q_y(\ud u) \min \biggl\{1,r(x,y)\frac{u}{w} \biggr\},
\end{eqnarray*}
respectively. Then we have
\[
0 \le\alpha_P - \alpha_{\tilde{P}} \le\int\llvert w-1\rrvert
\pi(\ud x) \bigl(1-\rho(x) \bigr)Q_{x}(\ud w) \le\int\llvert w-1
\rrvert \pi(\ud x)Q_{x}(\ud w).
\]
\end{corollary}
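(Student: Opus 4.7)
The plan is to apply Proposition \ref{prop:order-generic} to the constant symmetric function $g \equiv 1$ on $\mathsf{X}^2$.

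First I would identify $\Delta_{\bar{P}}(1)$ with $\alpha_P$. Since $\int \pi_y(\ud u) = 1$ and the marginal of $\tilde{\pi}(\ud x,\ud w)$ in $x$ is $\pi(\ud x)$, the double integrals in the definition of $\Delta_{\bar{P}}$ collapse to
\[
    \Delta_{\bar{P}}(1) = \int \pi(\ud x) \int q(x,\ud y)\min\{1,r(x,y)\} = \alpha_P.
\]
Likewise $\Delta_{\tilde{P}}(1)=\alpha_{\tilde{P}}$ directly from the definition. Proposition \ref{prop:order-generic} therefore gives the nonnegativity $\alpha_P - \alpha_{\tilde{P}} \ge 0$ as well as the upper bound
\[
    \alpha_P - \alpha_{\tilde{P}}
    \le \int \pi(\ud x) Q_x(\ud w)\,|w-1|\int q(x,\ud y)\min\{1,r(x,y)\}.
\]

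Next I would recognise the inner integral in $y$ as $1-\rho(x)$ by the definition of $\rho$ in \eqref{eq:r-and-rho}, which yields the middle bound of the corollary. The rightmost bound then follows trivially from $1-\rho(x) \le 1$.

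There is essentially no obstacle here: the only things to check are that $g\equiv 1$ is a valid (symmetric, nonnegative) choice in Proposition \ref{prop:order-generic} and that the quantities are finite so that the second bound of that proposition applies — both are immediate, since all the integrands are bounded by $1$ or by $|w-1|$ whose $\pi\otimes Q_x$-integral appears in the final bound anyway (and if it is infinite the inequality is vacuous).
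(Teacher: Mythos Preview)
Your proposal is correct and essentially identical to the paper's own proof: both apply Proposition~\ref{prop:order-generic} with $g\equiv 1$, identify $\Delta_{\bar{P}}(1)=\alpha_P$ and $\Delta_{\tilde{P}}(1)=\alpha_{\tilde{P}}$, recognise the inner $y$-integral as $1-\rho(x)$, and finish with $\rho(x)\in[0,1]$.
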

%

\begin{pf} 
Observe first that
\[
\alpha_{\bar{P}} \defeq\int\tilde{\pi}(\ud x, \ud w) \int q(x,\ud y)
Q_y(\ud u) \min\bigl\{1,r(x,y)\bigr\} =\alpha_P.
\]
Applying then Proposition~\ref{prop:order-generic} with $g \equiv1$ implies
\[
0  \le\alpha_P - \alpha_{\tilde{P}} \le\int\llvert w-1\rrvert
\pi(\ud x) \bigl(1-\rho(x) \bigr)Q_{x}(\ud w).
\]
The last inequality follows because $\rho(x)\in[0,1]$ for all
$x\in\mathsf{X}$.
\end{pf}

%
%
\begin{remark}
Corollary~\ref{cor:acc-prob-order} implies also the following bounds:
\[
\alpha_P - \alpha_{\tilde{P}} \le\cases{ \displaystyle\alpha_P
\biggl( \sup_{x\in\mathsf{X}} \int Q_x(\ud w) |1-w| \biggr),
\vspace*{2pt}
\cr
\displaystyle\alpha_P^{1/p} \biggl( \int\pi(\ud x)
Q_x(\ud w) |1-w|^q \biggr)^{1/q}, }
\]
where $p,q>1$ with $1/p+1/q=1$.
\end{remark}

We now define the notion of asymptotic variance for scaled ergodic
averages of a Markov chain.

%
\begin{definition}
\label{def:asvar} 
Let $\Pi$ be a reversible Markov kernel with invariant distribution
$\mu$ defined on some measurable space $(\mathsf{E},\B(\mathsf{E}))$,
and denote by $(X_k)_{k\ge0}$ the corresponding Markov chain
at stationarity, that is, such that $X_0\sim\mu$. Suppose
$f\dvtx\mathsf{E}\to\R$ satisfies $\mu(f^2)<\infty$. The \emph{asymptotic
variance} of $f$ under $\Pi$ is defined as
%
\begin{equation}
\var(f,\Pi) \defeq\lim_{n\to\infty} \frac{1}{n} \E \Biggl(\sum
_{k=1}^n f(X_k) -\mu(f)
\Biggr)^2\in[0,\infty]. \label{eq:def-asvar}
\end{equation}
Whenever the
\emph{integrated autocorrelation time}
\begin{eqnarray}
\tau(f,\Pi) \defeq1 + 2\sum_{k=1}^\infty
\frac{\E[f(X_0)f(X_k)]-\pi(f)^2}{\var_{\mu}(f)}\nonumber\\
\eqntext{\mbox{with } \var_\mu(f) \defeq\mu \bigl(f-\mu(f)\bigr)^2,}
\end{eqnarray}
exists and is finite,
then $\var(f,\Pi) = \tau(f,\Pi) \var_{\mu}(f)\in[0,\infty)$.
\end{definition}

%
Lemma~\ref{lem:asvar-expressions} in Appendix \ref{sec:spectral} shows
that the limit in \eqref{eq:def-asvar} always exists (but may be
infinite) and proves the relation between $\tau(f,\Pi)$ and
$\var(f,\Pi)$. We now show that a pseudo-marginal algorithm is always
dominated by its associated marginal algorithm in terms of asymptotic
variance. The result can be regarded as an extension
of Peskun's approach \cite{peskun,tierney-note}. We point out in the proof
what makes the result not straightforward.

%
\begin{theorem}
\label{thm:as-var-order} 
Assume $f\dvtx\mathsf{X}\to\R$ satisfies $\pi(f^2)<\infty$.
Denote $\var(f,\tilde{P})=\var(\tilde{f},\tilde{P})$ where
$\tilde{f}(x,\uarg)\equiv f(x)$.
\begin{longlist}[(ii)]
\item[(i)]
Then $\var(f,\tilde{P}) \ge\var(f,{P})$.
\item[(ii)]
More specifically,
\[
\var(f,\tilde{P}) \ge \var(f,P) + \liminf_{\lambda\to1-} \bigl[
\Delta_{\bar{P}}(g_\lambda) - \Delta_{\tilde{P}}(g_\lambda)
\bigr],
\]
where $\Delta_{\bar{P}}(g_\lambda)$ and
$\Delta_{\tilde{P}}(g_\lambda)$ are defined in Proposition~\ref{prop:order-generic} and
$g_\lambda(x,y) \defeq[\phi_\lambda(x)-\phi_\lambda(y)]^2$
with
$\phi_\lambda(x) \defeq\sum_{k=0}^\infty\lambda^k [P^k f(x)
- \pi(f)]$ for $\lambda\in[0,1)$.
\end{longlist}
\end{theorem}
%

\begin{pf} 
Our proof is inspired by the proof of Tierney \cite{tierney-note}, Theorem~4, but we cannot use his argument directly
because Proposition~\ref{prop:order-generic} does not apply
to functions depending also on $u$ and $w$.
Observe first from the definition of
$\bar{P}$
that a Markov chain $(\bar{X}_n,\bar{W}_n)_{n\ge0}$
with the kernel $\bar{P}$ and with $(\bar{X}_0,\bar{W}_0)\sim
\tilde{\pi}$ coincides marginally with the marginal chain; that is,
$(X_n)_{n\ge0}$ following $P$ with $X_0\sim\pi$ and
$(\bar{X}_n)_{n\ge0}$ have the same distribution.
Therefore, $\var(f,\bar{P}) = \var(f,P)$. We denote
\[
\bar{f}(x) \defeq f(x) - \pi(f) \in L_0^2(\mathsf{X},
\pi) \defeq\bigl\{f\dvtx\mathsf{X}\to\R\given\pi(f)=0, \pi\bigl(f^2
\bigr)<\infty\bigr\},
\]
and with
a slight abuse of notation define $\bar{f}(x,w) \defeq\bar{f}(x)$
for all
$(x,w)\in\mathsf{X}\times\mathsf{W}$. Notice that $\bar{f}\in
L_0^2(\mathsf{X}\times\mathsf{W},\tilde{\pi})$.
For $\lambda\in[0,1)$, we define the
auxiliary quantities
\[
\var_\lambda(\bar{f},H) = \bigl\langle\bar{f}, (I-\lambda
H)^{-1} (I+\lambda H)f \bigr\rangle_{\tilde{\pi}},
\]
for any Markov kernel $H$ reversible with respect to $\tilde{\pi}$,
where $I$ stands for the identity operator.
We note that from Lemma~\ref{lem:operator-calculus} in Appendix
\ref{sec:spectral}, the quantity $\var_\lambda(\bar{f},H)$ is well
defined and
that from Lemma~\ref{lem:asvar-expressions}, it is sufficient
to show that
$\var_\lambda(\bar{f},\bar{P})\le\var_\lambda(\bar{f},\tilde{P})$
holds for
all $\lambda\in[0,1)$ in order to establish (i).

Using the notation of Lemma~\ref{lem:operator-calculus} with $P_1 =
\bar{P}$ and $P_2 = \tilde{P}$, we can write
%
\begin{eqnarray}\label{eq:var-bound}
\var_\lambda(\bar{f},\tilde{P})- \var_\lambda(\bar{f},\bar{P})
&= &\bigl\langle\bar{f}, A_\lambda(1) \bar{f} \bigr\rangle
_{\tilde{\pi}} - \bigl\langle\bar{f}, A_\lambda(0) \bar{f} \bigr
\rangle_{\tilde
{\pi}}
\nonumber
\\
&= &\int_0^1 \bigl\langle\bar{f},
A'_\lambda(\beta) \bar{f} \bigr\rangle_{\tilde{\pi}} \,\ud
\beta
\\
&=& \int_0^1 \int_0^\beta
\bigl\langle\bar{f}, A''_\lambda(\gamma)
\bar{f} \bigr\rangle _{\tilde{\pi}} \,\ud\gamma\,\ud\beta + \int
_0^1 \bigl\langle\bar{f}, A'_\lambda(0)
\bar{f} \bigr\rangle_{\tilde{\pi
}} \,\ud \beta. \nonumber
\end{eqnarray}
Note that if $\tilde{P}$ and $\bar{P}$ would satisfy
Peskun's order, then the second line is sufficient to conclude
\cite{tierney-note}. We show now
that both terms on the right-hand side of the last line are
nonnegative.

First observe that by Lemma~\ref{lem:operator-calculus},
\begin{eqnarray*}
\bigl\langle\bar{f}, A'_\lambda(0) \bar{f} \bigr\rangle
_{\tilde{\pi}} &=& 2 \lambda \bigl\langle\bar{f}, \smash{(I-\lambda
\bar{P})^{-1} (\tilde{P}-\bar{P}) (I-\lambda\bar{P})^{-1}
\bar{f} } \bigr\rangle_{\tilde{\pi}}
\\
&= &2 \lambda \bigl\langle\vphantom{\bar{f}}\phi_\lambda, \smash {(
\tilde{P}-\bar{P}) \phi_\lambda} \bigr\rangle_{\tilde{\pi}},
\end{eqnarray*}
due to the reversibility of $\bar{P}$, where
$\phi_\lambda\defeq(I-\lambda\bar{P})^{-1} \bar{f}
=\sum_{k=0}^\infty\lambda^k \bar{P}^k \bar{f}$ is well defined by
Lemma~\ref{lem:operator-calculus}.
We notice that $\bar{P}^k \bar{f}(x,w) = P^k \bar{f}(x)$ implying
$\phi_\lambda(x,w) = \phi_\lambda(x)$, and a straightforward
calculation [cf. \eqref{eq:dirichlet-form}] shows that
\begin{eqnarray*}
&&\bigl\langle\phi_\lambda, (\tilde{P}-\bar{P}) \phi_\lambda
\bigr\rangle_{\tilde{\pi}}
\\
&&\qquad= \int\tilde{\pi}(\ud x,\ud w) \phi_\lambda(x) \phi_\lambda(y)
\bigl( \tilde{P}(x,w; \ud y, \ud u) - \bar{P}(x,w; \ud y, \ud u) \bigr)
\\
&&\qquad= \frac{1}{2}\int \bigl(\phi_\lambda(x)- \phi_\lambda(y)
\bigr)^2 \tilde{\pi}(\ud x,\ud w) \bigl( \bar{P}(x,w; \ud y, \ud u) -
\tilde{P}(x,w; \ud y, \ud u) \bigr)
\\
&&\qquad= \frac{1}{2} \bigl[ \Delta_{\bar{P}}(g_\lambda) -
\Delta_{\tilde{P}}(g_\lambda) \bigr],
\end{eqnarray*}
with $g_\lambda(x,y)
=  (\phi_\lambda(x)-\phi_\lambda(y) )^2$, and
Proposition~\ref{prop:order-generic} yields
$ \langle\bar{f}, A'_\lambda(0) \bar{f}  \rangle
_{\tilde{\pi}}\ge0$.
We therefore turn our attention to
\begin{eqnarray*}
&&\bigl\langle\bar{f}, A''_\lambda(\gamma)
\bar{f} \bigr\rangle _{\tilde
{\pi}}
\\
&&\qquad= 4\lambda^2 \bigl\langle\bar{f}, \smash{ (I-\lambda
H_\gamma)^{-1}(\tilde{P}-\bar{P}) (I-\lambda
H_\gamma)^{-1}(\tilde{P}-\bar{P}) (I-\lambda
H_\gamma)^{-1} \bar{f} } \bigr\rangle_{\tilde{\pi}}
\\
&&\qquad= 4\lambda^2 \bigl\langle\varphi, (I-\lambda H_\gamma
\smash{)^{-1}}\varphi \bigr\rangle_{\tilde{\pi}},
\end{eqnarray*}
where $\varphi\defeq(\tilde{P}-\bar{P})(I-\lambda H_\gamma)^{-1}
\bar{f}$, by the reversibility of $\bar{P}$ and $\tilde{P}$ and the
interpolated kernel $H_\gamma= \bar{P} + \gamma(\tilde{P}-\bar{P})$.
It is possible to check that $\varphi\in
L_0^2(\mathsf{X}\times\mathsf{W},\tilde{\pi})$, so we
may conclude (i) by applying Lemma~\ref
{lem:asvar-expressions}
implying $ \langle\varphi, (I-\lambda H_\gamma)^{-1}\varphi
 \rangle_{\tilde{\pi}}\ge0$.

The specific lower bound (ii) follows
from \eqref{eq:var-bound} because the first term is always nonnegative.
\end{pf}
%


\section{Inheritance of the spectral gaps when the weights are
uniformly bound\-ed}
\label{13131313131131311} 

We consider now an order between the spectral gaps
of the pseudo-marginal kernel $\tilde{P}$ and the auxiliary kernel
$\bar{P}$ defined in \eqref{eq:bar-p}.
In particular, we find that if
$w$ is always bounded from above
by $\bar{w}\in[1,\infty)$, that is, $\mathsf{W}=(0,\bar{w}]$,
and $P$ has a nonzero (right) spectral gap (i.e., $P$ is
variance bounding; see \cite{roberts-rosenthal-geometric}, Theorem~14),
then $\tilde{P}$ has a nonzero spectral gap as well. We will also examine
the asymptotic variance constants using the spectral gap bound, and
conclude the section by a discussion on how our results on the spectral
gap can imply geometric ergodicity of $\tilde{P}$.

Suppose $f\dvtx\mathsf{X}\times\mathsf{W}\to\R$ is integrable with
respect to $\tilde{\pi}$.
We denote in this section the function centred with
respect to $w$ as
\begin{eqnarray}
\bar{f}(x,w)\defeq f(x,w)-f_0(x) \nonumber\\
\eqntext{\mbox{with } \displaystyle f_0(x)
\defeq \pi_{x} \bigl(f(x,\uarg) \bigr) = \int_{0}^{\infty}
f(x,w)\pi _{x}(\ud w).}
\end{eqnarray}
The Dirichlet form related to a Markov kernel $\Pi$ with
invariant distribution $\mu$ and a function $g$ is given as
%
\begin{equation}
\mathcal{E}_{\Pi}(g) \defeq \bigl\langle g, (I-\Pi)g \bigr\rangle
_\mu = \frac{1}{2} \int\mu(\ud x) \Pi(x,\ud y) \bigl[g(x)-g(y)
\bigr]^2, \label{eq:dirichlet-form}
\end{equation}
where $I$ is the identity operator.
The spectral gap is defined through
%
\begin{equation}
\Gap(\Pi) \defeq\inf_{g\given\var_\mu(g)>0} \frac{\mathcal{E}_{\Pi}(g)}{\var_\mu(g)} = \inf
_{g\given\mu(g)=0, \|g\|_{\mu}=1} \mathcal{E}_{\Pi}(g), \label{eq:spectral-gap}
\end{equation}
where $\var_\mu(g)$ is given in Definition~\ref{def:asvar}.

%
\begin{proposition}
\label{prop:gap-p-vs-pbar} 
The spectral gap of $\bar{P}$ defined in \eqref{eq:bar-p} satisfies
\[
\Gap(P)\wedge \Bigl(1-\esssup_{x\in\mathsf{X}}\rho(x) \Bigr) \le\Gap(\bar{P})\le
\Gap(P),
\]
where the essential supremum is with respect to $\pi$.
\end{proposition}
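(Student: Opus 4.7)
The strategy is to use the fact that $\bar{P}$ has a product-type structure: conditionally on a move (which happens with probability $1-\rho(x)$) it draws the new weight $u$ from the conditional $\tilde{\pi}$-distribution $\pi_y(\ud u)$, so the $w$-coordinate is ``refreshed'' upon acceptance and the $x$-marginal of a chain driven by $\bar{P}$ is exactly the $P$-chain. This suggests decomposing any test function $f \in L^2(\tilde{\pi})$ orthogonally as $f = f_0 + \bar{f}$, where $f_0(x) = \pi_x\big(f(x,\uarg)\big)$ depends only on $x$ and $\bar{f}(x,w) = f(x,w) - f_0(x)$ integrates to zero against $\pi_x(\ud w)$ for every $x$. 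By construction $\langle f_0,\bar{f}\rangle_{\tilde{\pi}} = 0$, so $\|f\|_{\tilde{\pi}}^2 = \|f_0\|_\pi^2 + \|\bar{f}\|_{\tilde{\pi}}^2$.

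The central step is a direct computation of $\bar{P} f$: because $\int \pi_y(\ud u) f(y,u) = f_0(y)$, one obtains
\[
\bar{P}f(x,w) = \int q(x,\ud y)\min\{1,r(x,y)\} f_0(y) + \rho(x) f(x,w) = P f_0(x) + \rho(x) \bar{f}(x,w),
\]
and consequently $(I-\bar{P})f(x,w) = (I-P)f_0(x) + (1-\rho(x)) \bar{f}(x,w)$. Plugging this into the Dirichlet form and using that both cross-terms $\langle f_0,(1-\rho)\bar{f}\rangle_{\tilde{\pi}}$ and $\langle \bar{f},(I-P)f_0\rangle_{\tilde{\pi}}$ vanish (they factor through $\int \pi_x(\ud w)\bar{f}(x,w)=0$) yields the clean identity
\[
\mathcal{E}_{\bar{P}}(f) = \mathcal{E}_P(f_0) + \int \pi(\ud x)\,(1-\rho(x)) \int \pi_x(\ud w)\,\bar{f}(x,w)^2.
\]

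For the lower bound, assume $\tilde{\pi}(f)=0$ and $\|f\|_{\tilde{\pi}}=1$; note $\pi(f_0)=\tilde{\pi}(f)=0$, so $\mathcal{E}_P(f_0) \ge \Gap(P)\|f_0\|_\pi^2$ by \eqref{eq:spectral-gap}. Bounding $1-\rho(x) \ge 1 - \esssup_x \rho(x)$ on the second term and combining gives
\[
\mathcal{E}_{\bar{P}}(f) \ge \Gap(P)\|f_0\|_\pi^2 + \big(1-\esssup_x\rho(x)\big)\|\bar{f}\|_{\tilde{\pi}}^2 \ge \Gap(P)\wedge\big(1-\esssup_x\rho(x)\big),
\]
since $\|f_0\|_\pi^2 + \|\bar{f}\|_{\tilde{\pi}}^2 = 1$. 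Taking the infimum over admissible $f$ establishes the lower bound. The upper bound is the easy direction: for any $g \in L^2(\pi)$ with $\pi(g)=0$, $\|g\|_\pi=1$, set $\tilde{g}(x,w) \defeq g(x)$; then $\tilde{\pi}(\tilde{g})=0$, $\|\tilde{g}\|_{\tilde{\pi}}=1$, the rejection term contributes nothing to the Dirichlet form, and the $u$-integral is trivial, giving $\mathcal{E}_{\bar{P}}(\tilde{g}) = \mathcal{E}_P(g)$; infimising yields $\Gap(\bar{P})\le\Gap(P)$.

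The only non-routine ingredient is recognising the orthogonal decomposition induced by the conditional law $\pi_x$ and verifying the cancellation of the cross-terms; once the explicit formula for $\bar{P}f$ is in hand, the rest is bookkeeping. I do not anticipate any real obstacle beyond being careful that the essential supremum in the bound $1-\rho(x)\ge 1-\esssup\rho$ is taken with respect to $\pi$, which is consistent with the $\pi(\ud x)$ integration in the Dirichlet form.
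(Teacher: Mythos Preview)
Your proof is correct and arrives at exactly the same key identity as the paper, namely $\mathcal{E}_{\bar{P}}(f) = \mathcal{E}_P(f_0) + \int \pi(\ud x)(1-\rho(x))\int \pi_x(\ud w)\bar{f}(x,w)^2$, after which the lower and upper bounds follow identically. The only difference is in how that identity is derived: the paper expands the integral form $\tfrac{1}{2}\int \tilde{\pi}(\ud x,\ud w)\bar{P}(x,w;\ud y,\ud u)\big([f(x,w)-f(y,u)]^2-[f_0(x)-f_0(y)]^2\big)$ directly, whereas you compute $\bar{P}f(x,w)=Pf_0(x)+\rho(x)\bar{f}(x,w)$ explicitly and use the operator form $\langle f,(I-\bar{P})f\rangle_{\tilde{\pi}}$ together with the orthogonality $\int\pi_x(\ud w)\bar{f}(x,w)=0$ to kill the cross-terms---a slightly more structural route to the same place.
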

%

\begin{pf} 
Let $f\dvtx\mathsf{X}\times\mathsf{W}\to\R$ with
$\tilde{\pi}(f)=0$ and $\|f\|_{\tilde{\pi}}=1$, and compute
\begin{eqnarray*}
\mathcal{E}_{\bar{P}}(f) - \mathcal{E}_{P}(f_0) &=&
\frac{1}{2}\int\pi(\ud x) \pi_x(\ud w) q(x,\ud y)
\pi_y(\ud u) \min\bigl\{1,r(x,y)\bigr\}
\\
&&{}\times \bigl(\bigl[f(x,w)-f(y,u)\bigr]^2-
\bigl[f_0(x)-f_0(y)\bigr]^2 \bigr)
\\
&=& \int\pi(\ud x) \pi_x(\ud w) q(x,\ud y)\min\bigl\{1,r(x,y)\bigr\}
\bigl[f^2(x,w)-f_0^2(x)\bigr]
\\
&=& \int\pi(\ud x) \pi_x(\ud w) \bigl[f(x,w)-f_0(x)
\bigr]^2 \bigl(1-\rho(x) \bigr).
\end{eqnarray*}
In other words,
%
\begin{equation}
\mathcal{E}_{\bar{P}}(f) = \mathcal{E}_{P}(f_0) +
\int\pi(\ud x) \pi_x(\ud w) \bigl(1 - \rho(x)\bigr)
\bar{f}^2(x,w). \label{eq:dirichlet-bar-p}
\end{equation}
If $\var_\pi(f_0)>0$, then we have by
\eqref{eq:dirichlet-bar-p},
%
\begin{eqnarray}
\label{eq:dirichlet-bar-p-bound} \mathcal{E}_{\bar{P}}(f) & \ge&\Gap(P) \var_\pi(f_0)+
\int\pi(\ud x) \pi_x(\ud w) \bigl(1 - \rho(x)\bigr)
\bar{f}^2(x,w)
\nonumber
\\[-8pt]
\\[-8pt]
\nonumber
&\ge&\Gap(P) \bigl(1-\tilde{\pi}\bigl(\bar{f}^2\bigr) \bigr) +
\Bigl(1-\esssup_{x\in\mathsf{X}} \rho(x) \Bigr) \tilde{\pi}\bigl(
\bar{f}^2\bigr),
\nonumber
\end{eqnarray}
where we have used that
$1=\var_{\tilde{\pi}}(f) = \var_\pi(f_0) +
\tilde{\pi}(\bar{f}^2)$ by the variance decomposition identity.
We notice that \eqref{eq:dirichlet-bar-p-bound} holds also when
$\var_\pi(f_0)=0$. We conclude with
the bound $\mathcal{E}_{\bar{P}}(f)
\ge\Gap(P)\wedge (1-\esssup_{x\in\mathsf{X}} \rho(x) )$ which
holds for all $\|f\|_{\tilde{\pi}}=1$ with $\tilde{\pi}(f)=0$,
implying the first inequality.

For the second inequality, note that if $f(x,w)=f_0(x)$ for all
$(x,w)\in\mathsf{X}\times\mathsf{W}$,
then $\pi(f_0)=0$ and $\pi(f_0^2)=1$. Consequently,
$\mathcal{E}_{\bar{P}}(f) = \mathcal{E}_P(f_0)$. Therefore,
$\Gap(\bar{P})\le\Gap(P)$.
\end{pf}
%

%
\begin{remark} 
In the case where $\pi$ is not concentrated on points, that is,\vspace*{1pt}
$\pi(\{x\})=0$ for all $x\in\mathsf{X}$, the statement of Proposition~\ref{prop:gap-p-vs-pbar} simplifies to $\Gap(\bar{P}) = \Gap(P)$,
because then $1-\esssup_{x\in\mathsf{X}} \rho(x) \ge\Gap(P)$ by
Lem\-ma~\ref{lemma:gap-vs-accprob}(ii) in
Appendix~\ref{13131313131131311-lemmas}.
\end{remark}
%

%
\begin{proposition}
\label{prop:pseudo-bar-p} 
Suppose that there exists a constant $\bar{w}\in[1,\infty)$ such that
%
\begin{equation}
Q_x \bigl([0,\bar{w}] \bigr) = 1 \qquad\mbox{for $\pi$-almost every $x\in
\mathsf{X}$.} \label{eq:weight-as-bounded}
\end{equation}
Then, the Dirichlet form of the pseudo-marginal algorithm satisfies
\[
\mathcal{E}_{\tilde{P}}(f) \ge\bar{w}^{-1} \mathcal{E}_{\bar{P}}(f),
\]
for any function with $\tilde{\pi}(f^2)<\infty$, implying
$\Gap(\tilde{P})\geq\bar{w}^{-1}\Gap(\bar{P})$.
\end{proposition}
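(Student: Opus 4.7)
My plan is to compare the two Dirichlet forms directly by bringing them to a common dominating measure. Since the rejection atoms of both $\tilde{P}$ and $\bar{P}$ contribute nothing to $\mathcal{E}_\Pi(f)$ (the integrand contains $[f(x,w)-f(x,w)]^2$), only the proposal-acceptance parts matter. I will substitute $\tilde{\pi}(\ud x, \ud w) = \pi(\ud x) Q_x(\ud w) w$ in both expressions, and additionally $\pi_y(\ud u) = Q_y(\ud u) u$ for $\bar{P}$. Using the identity $w \min\{1, r(x,y) u / w\} = \min\{w, r(x,y) u\}$, this yields
\[
\mathcal{E}_{\bar{P}}(f) = \tfrac{1}{2} \int \pi(\ud x) Q_x(\ud w) q(x,\ud y) Q_y(\ud u) \, w u \min\{1, r(x,y)\} [f(x,w)-f(y,u)]^2
\]
and
\[
\mathcal{E}_{\tilde{P}}(f) = \tfrac{1}{2} \int \pi(\ud x) Q_x(\ud w) q(x,\ud y) Q_y(\ud u) \, \min\{w, r(x,y) u\} [f(x,w)-f(y,u)]^2,
\]
so the claim reduces to the pointwise inequality $\min\{w, r u\} \ge \bar{w}^{-1} w u \min\{1, r\}$ for all $w, u \in (0, \bar{w}]$ and $r \ge 0$.

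This pointwise bound is elementary: the assumption $u \le \bar{w}$ gives $w \ge \bar{w}^{-1} w u$, and $w \le \bar{w}$ gives $r u \ge \bar{w}^{-1} r u w$; taking the minimum of the two left-hand sides and then factoring out $wu$ yields
\[
\min\{w, r u\} \ge \bar{w}^{-1} \min\{wu, r u w\} = \bar{w}^{-1} w u \min\{1, r\}.
\]
Multiplying through by the non-negative $[f(x,w)-f(y,u)]^2$ and integrating (finiteness is ensured by $\tilde{\pi}(f^2) < \infty$ together with $w, u \le \bar{w}$) gives $\mathcal{E}_{\tilde{P}}(f) \ge \bar{w}^{-1} \mathcal{E}_{\bar{P}}(f)$.

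The spectral gap bound is then a direct consequence of the variational characterisation \eqref{eq:spectral-gap}: for any $f$ with $\tilde{\pi}(f)=0$ and $\|f\|_{\tilde{\pi}}=1$ we have $\mathcal{E}_{\tilde{P}}(f) \ge \bar{w}^{-1} \mathcal{E}_{\bar{P}}(f) \ge \bar{w}^{-1} \Gap(\bar{P})$, whence $\Gap(\tilde{P}) \ge \bar{w}^{-1} \Gap(\bar{P})$. I anticipate no substantive obstacle: the only mildly delicate step is rewriting $\tilde{P}$ and $\bar{P}$ against the common reference measure $\pi(\ud x) Q_x(\ud w) q(x,\ud y) Q_y(\ud u)$ so that a factor of $w$ on the $\tilde P$ side and factors of $wu$ on the $\bar P$ side are exposed; once this is done, the comparison reduces to a one-line consequence of $w, u \le \bar{w}$, with no convexity, Jensen-type, or coupling argument required, unlike the proof of Proposition \ref{prop:order-generic}.
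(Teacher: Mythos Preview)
Your proof is correct and follows essentially the same approach as the paper's. The paper writes $\min\{1,r(x,y)\tfrac{u}{w}\}\ge \min\{1,r(x,y)\}\min\{1,\tfrac{u}{w}\}$, absorbs a factor of $u$ into $Q_y(\ud u)$ to obtain $\pi_y(\ud u)$, and then bounds $\min\{1/u,1/w\}\ge \bar{w}^{-1}$; your version brings both Dirichlet forms to the common reference measure $\pi(\ud x)Q_x(\ud w)q(x,\ud y)Q_y(\ud u)$ first and then compares $\min\{w,r u\}$ with $\bar{w}^{-1}wu\min\{1,r\}$, which is the same pointwise inequality after multiplying through by $w$.
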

%

\begin{pf} 
Because $\min\{1,ab\}\ge
\min\{1,a\}\min\{1,b\}$ for all $a,b\ge0$, we have,
denoting $\Delta^2f(x,w;y,u) \defeq [f(x,w)-f(y,u) ]^2$
\begin{eqnarray*}
2\mathcal{E}_{\tilde{P}}(f)& =&\int\tilde{\pi}(\ud x, \ud w) q(x,\ud y)
Q_{y}(\ud u) \min \biggl\{ 1,r(x,y)\frac{u}{w} \biggr\}
\Delta^2f(x,w;y,u)
\\
& \ge& \int
_{u>0} \tilde{\pi}(\ud x, \ud w) q(x,\ud y)
\pi_y(\ud u) \min\bigl\{1,r(x,y)\bigr\}\\
&&{}\times \min \biggl\{
\frac{1}{u},\frac{1}{w} \biggr\} 
\Delta^2f(x,w;y,u)
\\
& \geq&2\bar{w}^{-1}\mathcal{E}_{\bar{P}}(f). 
\end{eqnarray*}
\upqed\end{pf}
%

%
\begin{corollary}
\label{cor:autocorr-geom} 
Assume $\Gap(P)>0$, and there exists some $\bar{w}\in[1,\infty)$ such
that \eqref{eq:weight-as-bounded} holds.
Let $g\dvtx\mathsf{X}\to\R$ satisfy $\pi(g^2)<\infty$.
Then the asymptotic variances (Definition~\ref{def:asvar}) satisfy
\[
\var(g,P) \le \var(g,\tilde{P}) \le\bar{w}\var(g,P) + (\bar{w}-1)
\var_\pi(g), 
\]
where 
$\var(g,\tilde{P})\defeq\var(\tilde{g},\tilde{P})$ with
$\tilde{g}(x,\uarg) \equiv g(x)$.
\end{corollary}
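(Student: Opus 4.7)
The lower bound $\var(g,P) \le \var(g,\tilde{P})$ follows immediately from Theorem~\ref{thm:as-var-order}\eqref{item:plain-order} and the identification $\var(g,\tilde{P}) = \var(\tilde{g},\tilde{P})$ with $\tilde{g}(x,\uarg) \equiv g(x)$. The task is therefore to establish the upper bound. The natural plan is to upgrade the Dirichlet form comparison $\mathcal{E}_{\tilde{P}}(h) \ge \bar{w}^{-1}\mathcal{E}_{\bar{P}}(h)$ obtained in Proposition~\ref{prop:pseudo-bar-p} into a resolvent comparison
\[
(I-\lambda\tilde{P})^{-1} \preceq \bar{w}(I-\lambda\bar{P})^{-1}
\qquad\text{on } L^2(\tilde{\pi}),\ \lambda\in[0,1),
\]
apply it to $\bar{g}(x,w) \defeq g(x) - \pi(g) \in L_0^2(\mathsf{X}\times\mathsf{W},\tilde{\pi})$, and then pass to the limit $\lambda\uparrow 1$ using the auxiliary quantities $\var_\lambda$ introduced in the proof of Theorem~\ref{thm:as-var-order}.

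For the first step I would use the decomposition $\inner{h}{(I-\lambda\Pi)h}_{\tilde{\pi}} = (1-\lambda)\|h\|_{\tilde{\pi}}^2 + \lambda\mathcal{E}_\Pi(h)$, valid for any $\tilde{\pi}$-reversible $\Pi$. Combining this with Proposition~\ref{prop:pseudo-bar-p} and using $\bar{w}\ge 1$ (so that $(1-\lambda) \ge \bar{w}^{-1}(1-\lambda)$) yields
\[
\inner{h}{(I-\lambda\tilde{P})h}_{\tilde{\pi}} \ge \bar{w}^{-1}\inner{h}{(I-\lambda\bar{P})h}_{\tilde{\pi}}
\quad\text{for all } h\in L^2(\tilde{\pi}).
\]
Since $\tilde{P}$ and $\bar{P}$ are self-adjoint contractions with spectra in $[-1,1]$, both $I-\lambda\tilde{P}$ and $\bar{w}^{-1}(I-\lambda\bar{P})$ are bounded below by a strictly positive multiple of the identity for $\lambda\in[0,1)$, so operator monotonicity of $A\mapsto A^{-1}$ on positive operators delivers the resolvent inequality above.

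Evaluating the resolvent inequality at $\bar{g}$ and using the identity
\[
\var_\lambda(\bar{g},\Pi) + \var_{\tilde{\pi}}(\bar{g}) = 2\inner{\bar{g}}{(I-\lambda\Pi)^{-1}\bar{g}}_{\tilde{\pi}},
\]
(which is a direct consequence of $(I-\lambda\Pi)^{-1}(I+\lambda\Pi) = 2(I-\lambda\Pi)^{-1} - I$ and the definition of $\var_\lambda$ in the proof of Theorem~\ref{thm:as-var-order}), one obtains
\[
\var_\lambda(\bar{g},\tilde{P}) \le \bar{w}\var_\lambda(\bar{g},\bar{P}) + (\bar{w}-1)\var_{\tilde{\pi}}(\bar{g}).
\]
Taking $\lambda\uparrow 1$ via Lemma~\ref{lem:asvar-expressions}, and noting that $\var(\bar{g},\bar{P}) = \var(g,P)$ (because the first marginal of the $\bar{P}$-chain is the $P$-chain, as used in the proof of Theorem~\ref{thm:as-var-order}) and $\var_{\tilde{\pi}}(\bar{g}) = \var_\pi(g)$, gives the advertised upper bound. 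I expect the only delicate step to be the passage from the form inequality to the resolvent inequality via operator monotonicity; the bookkeeping is clean precisely because $\bar{w}\ge 1$ absorbs the $(1-\lambda)\|h\|^2$ term without damage, and the spectral gap assumption $\Gap(P)>0$ combined with Propositions~\ref{prop:gap-p-vs-pbar} and \ref{prop:pseudo-bar-p} makes the limit $\lambda\uparrow 1$ well behaved.
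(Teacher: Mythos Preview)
Your proposal is correct and follows essentially the same approach as the paper: both combine the Dirichlet form comparison of Proposition~\ref{prop:pseudo-bar-p} with operator monotonicity of the inverse (Lemma~\ref{lem:operator-order} in the paper) to obtain a resolvent inequality, then translate this into an asymptotic variance bound via the identity $\var(f,\Pi)+\var_\mu(f)=2\inner{\bar f}{(I-\Pi)^{-1}\bar f}_\mu$ and the identification $\var(\tilde g,\bar P)=\var(g,P)$. The only cosmetic difference is that the paper works directly with $(I-\tilde P)^{-1}$ and $(I-\bar P)^{-1}$ on $L_0^2(\tilde\pi)$ (their boundedness being guaranteed by the spectral gap hypothesis together with Propositions~\ref{prop:gap-p-vs-pbar} and~\ref{prop:pseudo-bar-p}), whereas you insert the regularisation $\lambda\in[0,1)$ and pass to the limit via Lemma~\ref{lem:asvar-expressions}; this is harmless and arguably a touch more explicit.
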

%

\begin{pf} 
Proposition~\ref{prop:pseudo-bar-p} implies
$ \langle f, (I-\smash{\tilde{P}})f  \rangle_{\tilde{\pi
}} \ge
 \langle f, \bar{w}^{-1}(I-\smash{\bar{P}})f  \rangle
_{\tilde{\pi}}$ for all
functions $\tilde{\pi}(f^2)<\infty$, and Lemma~\ref{lem:operator-order}
in Appendix \ref{13131313131131311-lemmas} implies
\[
 \bigl\langle\tilde{g}, (I-\smash{\tilde{P}})^{-1}
\tilde{g} \bigr\rangle _{\tilde
{\pi}} \le\bar{w} \bigl\langle\tilde{g}, (I-
\smash{\bar {P}})^{-1}\tilde {g} \bigr\rangle_{\tilde{\pi}}.
\]
Now note that $\var_{\tilde{\pi}}(\tilde{g})=\var_\pi(g)$ and
$\var(\tilde{g},\bar{P})=\var(g,P)$ hold because
$\bar{P}$ and $P$ coincide marginally; see the proof of
Theorem~\ref{thm:as-var-order}.
The above, together with Theorem~\ref{thm:as-var-order}, imply
\[
\var_{\pi}(g) + \var(g,P) \le \var_{\tilde{\pi}}(\tilde{g}) + \var(
\tilde{g}, \tilde{P}) \le\bar{w} \bigl(\var_{\tilde{\pi}}(\tilde{g})+\var(\tilde
{g},\bar {P}) \bigr),
\]
and allows us to conclude.
\end{pf}
%

%
\begin{remark} 
From the proof of Proposition~\ref{prop:pseudo-bar-p}, one observes
that in fact
\[
\Gap(\tilde{P}) \ge\Gap(\check{P}) \ge\bar{w}^{-1}\Gap(\bar{P}),
\]
where $\check{P}$ is the Markov kernel
with the proposal $q(x,\ud y)Q_y(\ud u)$ and the acceptance
probability $\min\{1,r(x,y)\}\min\{1,u/w\}$ reversible with respect to
$\tilde{\pi}$.
This implies, repeating the arguments in the proof of Corollary~\ref{cor:autocorr-geom},
that $\var(f,\tilde{P})\le\var(f,\check{P})$
for all $\tilde{\pi}(f^2)<\infty$.

We also note that in our follow-up work
\cite{andrieu-vihola-monotone}, we upper bound the spectral gap of the
pseudo-marginal algorithm by that of the marginal,
$\Gap(\tilde{P})\le\Gap(P)$.
\end{remark}
%

Next we show that the boundedness of the support of the weight
distributions $Q_x$ for essentially all $x\in\mathsf{X}$ is a
necessary condition for the spectral gap of the pseudo-marginal
algorithm. The result is similar to Theorem~8 in
\cite{andrieu-roberts}, but its proof is different and the statement
more explicit.

%
\begin{proposition}
\label{prop:geom-bound-necessity} 
If the pseudo-marginal kernel $\tilde{P}$ has a nonzero
spectral gap, then there exists a function
$\bar{w}\dvtx\mathsf{X}\to[1,\infty)$ such that $Q_x ([0,\bar
{w}(x)] )=1$
for
$\pi$-a.e. $x\in\mathsf{X}$.
\end{proposition}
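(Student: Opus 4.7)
The plan is to prove the contrapositive: assuming no such function $\bar{w}$ exists means the set $\mathsf{X}_\infty\defeq\{x\in\mathsf{X}:Q_x((M,\infty))>0\text{ for every }M>0\}$ has positive $\pi$-measure, and I need to deduce $\Gap(\tilde{P})=0$. The strategy is to exhibit a family of sets $A_M\subset\mathsf{X}\times\mathsf{W}$ with $\tilde{\pi}(A_M)\in(0,1/2]$ for large $M$ and conductance $\Phi(A_M)\defeq \tilde{\pi}(A_M)^{-1}\int_{A_M}\tilde{P}(x,w;A_M^c)\tilde{\pi}(\ud x,\ud w)\le 1/M$. The test function $f_M=\mathbb{1}_{A_M}-\tilde{\pi}(A_M)$ then has Rayleigh quotient $\mathcal{E}_{\tilde{P}}(f_M)/\|f_M\|_{\tilde{\pi}}^2=\Phi(A_M)/(1-\tilde{\pi}(A_M))\le 2\Phi(A_M)$, and the variational characterization \eqref{eq:spectral-gap} forces $\Gap(\tilde{P})=0$.

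The main ingredient is a careful choice of $A_M$. I would first introduce the auxiliary function
\[
R(x)\defeq \int q(x,\ud y)\,r(x,y),
\]
and observe that the detailed-balance identity $\pi(\ud x)q(x,\ud y)r(x,y)=\pi(\ud y)q(y,\ud x)$ (which underlies \eqref{eq:r-and-rho}) yields $\int R\,\ud\pi=\int\pi(\ud y)q(y,\mathsf{X})=1$, so $R$ is finite $\pi$-almost everywhere. I would then define the \emph{adaptively scaled} tail sets
\[
A_M\defeq\bigl\{(x,w)\in\mathsf{X}\times\mathsf{W}:R(x)>0,\ w>MR(x)\bigr\}.
\]

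The key estimate is that for $(x,w)\in A_M$, bounding $\min\{1,r(x,y)u/w\}\le r(x,y)u/w$ and using $\int u\,Q_y(\ud u)=1$ gives
\[
\tilde{P}(x,w;A_M^c)\le \iint\frac{r(x,y)u}{w}\,q(x,\ud y)Q_y(\ud u)=\frac{R(x)}{w}.
\]
Integrating against $\tilde{\pi}$ and using the Markov-type inequality $Q_x((MR(x),\infty))\le \pi_x((MR(x),\infty))/(MR(x))$,
\[
\int_{A_M}\tilde{P}(x,w;A_M^c)\,\tilde{\pi}(\ud x,\ud w)\le \int\pi(\ud x)R(x)Q_x((MR(x),\infty))\le \frac{\tilde{\pi}(A_M)}{M},
\]
which gives $\Phi(A_M)\le 1/M$. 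It remains to verify the size of $A_M$: since $\pi(\mathsf{X}_\infty\cap\{0<R<\infty\})>0$, and for each such $x$ the measure $Q_x$ has unbounded support so $\pi_x((MR(x),\infty))>0$, one has $\tilde{\pi}(A_M)>0$; while dominated convergence yields $\tilde{\pi}(A_M)\to 0$, so $\tilde{\pi}(A_M)\in(0,1/2]$ for large $M$.

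The hard part is precisely the choice of the sets $A_M$. The naive uniform truncation $\{w>M\}$ also gives the escape bound $\tilde{P}(x,w;\cdot)\le R(x)/w$, but the resulting conductance estimate involves the conditional expectation of the merely $L^1(\pi)$ function $R$ over $\{w>M\}$, which need not be bounded---for instance when the $\tilde{\pi}$-mass at large $w$ is concentrated where $R$ is large. Rescaling the threshold by $R(x)$ cancels this factor exactly and turns an $R$-dependent bound into a uniform $1/M$ decay of the conductance, which is the crux of the argument.
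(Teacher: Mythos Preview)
Your proof is correct and follows essentially the same route as the paper: both argue by contrapositive, exhibit tail sets in $w$ with an $x$-dependent threshold on which the acceptance probability is uniformly small, and use the indicator test function (equivalently, a conductance bound) to force $\Gap(\tilde P)=0$. The paper takes the threshold $\tilde w_\epsilon(x)\defeq\inf\{w\in\N:1-\tilde\rho(x,w)\le\epsilon\}$ directly from the pseudo-marginal rejection probability, which is finite because $\tilde\rho(x,w)\uparrow 1$ as $w\to\infty$, and then invokes Lemma~\ref{lemma:gap-vs-accprob}\eqref{item:p-upper-bound}. You instead bound $1-\tilde\rho(x,w)\le R(x)/w$ and use the explicit cutoff $MR(x)$. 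Your construction is more concrete and explains transparently why an $x$-dependent threshold is needed, at the cost of the auxiliary integrability facts about $R$; the paper's choice is self-contained and sidesteps those.

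One small point is left unjustified: the claim $\pi(\mathsf{X}_\infty\cap\{R>0\})>0$ does not follow from $\pi(\mathsf{X}_\infty)>0$ and $R\in L^1(\pi)$ alone. The fix is immediate: if $\pi(\{R=0\})>0$, then since $\pi(R)=1$ forces $\pi(\{R>0\})>0$, the set $\{R=0\}\times\mathsf{W}$ is absorbing for $\tilde P$ with $\tilde\pi$-mass in $(0,1)$, and $\Gap(\tilde P)=0$ follows at once; so one may assume $R>0$ $\pi$-a.e., after which $\pi(\mathsf{X}_\infty\cap\{R>0\})=\pi(\mathsf{X}_\infty)>0$. Incidentally, the Markov-inequality step is simpler than you wrote: on $A_M$ one has $R(x)/w<1/M$, so $\int_{A_M}(R(x)/w)\,\tilde\pi(\ud x,\ud w)\le M^{-1}\tilde\pi(A_M)$ directly.
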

%

\begin{pf} 
We prove the claim by contradiction.
Assume that there exists a set $A\in\B(\mathsf{X})$
with $\pi(A)>0$ such that $Q_x( ([0,\tilde{w}] )<1$ for all
$x\in A$ and all
$\tilde{w}\in[1,\infty)$.
Fix $\varepsilon>0$ and define a measurable function
$\tilde{w}_\varepsilon(x) \defeq\inf\{w\in\N\given1-\tilde{\rho
}(x,w)\le
\varepsilon\}$, which is finite everywhere, because the term
$\tilde{\rho}(x,w)\to1$ as $w\to\infty$ (monotonically) for all
$x\in\mathsf{X}$. Observe that
$\tilde{\pi}(\tilde{A}_\varepsilon)>0$ where
$\tilde{A}_\varepsilon\defeq
\{(x,w)\in A\times\mathsf{W}\given w\ge
\tilde{w}_\varepsilon(x)\}$.
Because $\tilde{w}_\varepsilon$ increases to infinity as $\varepsilon
\to0$,
we have $\tilde{\pi}(\tilde{A}_\varepsilon)\in(0,1/2)$ for small enough
$\varepsilon>0$.
For such $\varepsilon>0$, we may apply
Lemma~\ref{lemma:gap-vs-accprob}(i)
in Appendix \ref{13131313131131311-lemmas}
with the set $\tilde{A}_\varepsilon$,
to conclude that $\Gap(\tilde{P}) \le
(1-\tilde{\pi}(\tilde{A}_\varepsilon))^{-1}\varepsilon\le
2\varepsilon$.
\end{pf}
%

%
\begin{remark}
\label{rem:uniform-bound-necessity} 
Proposition~\ref{prop:geom-bound-necessity} implies the necessity
of the existence of $\bar{w}\dvtx\mathsf{X}\to[1,\infty)$ for
spectral gap and consequently
geometric ergodicity to hold, but does not require the existence
of a uniform upper bound $\bar{w}$ as in Proposition~\ref{prop:pseudo-bar-p}. Uniformity is indeed not necessary as
illustrated in Remarks \ref{rem:pm-imh-uniformly-ergodic} and
\ref{rem:rwm-geometrically-ergodic} with the independent MH and
random walk MH algorithms, respectively; see also \cite{lee-latuszynski},
Remark~1.
However, the second part of Remark~\ref{rem:rwm-geometrically-ergodic} implies that in some cases the
existence of a uniform upper bound $\bar{w}$ is indeed necessary.
\end{remark}
%

The above results are statements on the (right) spectral gap of
$\tilde{P}$ only, which is equivalent to variance bounding property of
$\tilde{P}$ \cite{roberts-rosenthal-variance-bounding}.
In some applications, geometric ergodicity may be
more desirable than variance boundedness.
We first note that in general, geometricity
can be enforced by a slight
algorithmic modification.

%
\begin{remark}
\label{rem:laziness} 
Suppose that $\tilde{P}$ is variance bounding.\vspace*{1pt} Then,
for any $\varepsilon\in(0,1)$, the lazy version of the pseudo-marginal
algorithm $\tilde{P}_\varepsilon\defeq\varepsilon I + (1-\varepsilon
)\tilde{P}$
is geometrically
ergodic \cite{roberts-rosenthal-variance-bounding}, Theorem~2.
\end{remark}
%

In many cases, however, such a modification is unnecessary, because
the pseudo-marginal algorithm can be shown to exhibit also a
nonzero left spectral gap, defined
using the notation in \eqref{eq:spectral-gap}
\[
{ \Gap_L}(\Pi) \defeq\inf_{g:\mu(g)=0, \|g\|_\mu=1} \bigl( 2 +
\mathcal{E}_{\Pi}(g) \bigr) = 1 + \inf_{g:\mu(g)=0, \|g\|_\mu=1} \langle
g, \Pi g\rangle_{\mu}.
\]
Nonzero left and right spectral gaps, or in other words the existence
of an
absolute spectral gap, is equivalent to geometric ergodicity
of a reversible chain (e.g., \cite{roberts-rosenthal-geometric},
Theorem~2.1).

Of particular interest are positive Markov operators $\Pi$
which satisfy\break $\langle g, \Pi g\rangle_{\mu}\ge0$ for all functions
$g$ with
$\|g\|_\mu<\infty$. For positive $\Pi$, clearly $\Gap_L(\Pi)\ge1$ and
establishing geometric ergodicity
only requires focusing on the right spectral gap.
We record the following easy proposition summarising two situations
where the pseudo-marginal algorithm inherits the positivity of the
marginal algorithm.

\begin{proposition}
\label{prop:positivity} 
The pseudo-marginal Markov operator is positive and therefore admits a
left spectral gap in the following cases:
\begin{longlist}[(a)]
\item[(a)]
if the marginal algorithm is an independent
Metropolis--Hastings (IMH);
\item[(b)]
if the marginal algorithm is a random-walk Metropolis (RWM)
with a proposal distribution, which can be written in the form
%
\begin{equation}
q(x,y)=\int\eta(z,x)\eta(z,y)\,\ud z. \label{eq:convolution-proposal}
\end{equation}
\end{longlist}
\end{proposition}
%

\begin{pf} 
Case (a) holds because
the pseudo-marginal version of an IMH is also an IMH
(see also Section~\ref{151515151515}),
which is positive  (e.g., \cite{gasemyr-imh}).
Case (b) follows by using
an argument of Baxendale \cite{baxendale-bounds}, Lemma~3.1, by writing
for $f\dvtx\mathsf{X}\times\mathsf{W}\to\R$ with
$\|f\|_{\tilde{\pi}}<\infty$,
\begin{eqnarray*}
\langle f,\tilde{P}f \rangle_{\tilde{\pi}} &\ge&\int\tilde{\pi}(\ud x,\ud w)
q(x,y) Q_y(\ud u) \min \biggl\{1,\frac{\pi(y)}{\pi(x)}
\frac{u}{w} \biggr\} f(x,w)f(y,u)
\\
&=&\int\phi^{2}(t,z)\,\mathrm{d}t\,\mathrm{d}z\ge0,
\end{eqnarray*}
where $\phi(t,z)\defeq\int f(x,w)\mathbbm{I} \{t\leq\pi
(x)w \}\eta(z,x)Q_{x} (\mathrm{d}w ) \,\ud x$.
\end{pf}
%

%
\begin{remark}
\label{rem:divisability} 
Condition \eqref{eq:convolution-proposal}
holds, in particular, with $q(x,y) =
\tilde{q}(y-x)$ where $\tilde{q}$ is ``divisible;'' that is,
it is the density of the sum of two independent random
variables sharing the same symmetric density $q_0$. Indeed, in such a scenario
$\tilde{q}(y-x) = \int q_0(u) q_0(y-x-u) \,\ud u
= \int q_0(z-x) q_0(y-z) \,\ud z$, and we may take
$\eta(z,x) = q_0(z-x) = q_0(x-z)$. This covers
the case where $\tilde{q}$ is a (possibly multivariate)
Gaussian or Student.
\end{remark}
%

We conjecture that geometric ergodicity is inherited in general
as soon as the weights are uniformly bounded. More precisely,
we believe that if the marginal algorithm is geometrically ergodic
(admits a nonzero absolute spectral gap)
and the weights are uniformly bounded, then the pseudo-marginal
algorithm is also geometrically ergodic. We have not been able to
prove this in general, but we have not found counter-examples either.

For completeness, we, however, provide the following counter-example
which shows that the left spectral gap of the marginal algorithm
may not be inherited by the pseudo-marginal algorithm without the
uniform upper
bound assumption on the weights.

%
\begin{example} 
Let $\mathsf{X}=\N$, $\pi(x) =
2^{-x-1}$ and $q(x,x+1) = q(x,x-1) = 1/2$ for all $x\in\mathsf{X}$.
Direct calculation yields a geometric drift with function
$V(x) = (3/2)^x$ toward an atom $\{0\}$, which shows
that $P$ is geometrically ergodic.

Let us then consider $\tilde{P}$ with the weight distributions $\{Q_x\}
_{x\in\mathsf{X}}$
defined for $x=10^k+n$ with $k\ge1$ and $n\in[1,10^k]$ by
\[
Q_x(w) \defeq(1-\varepsilon_k)\delta_{a(k,n)}(w)
+ \varepsilon_k \delta_{b(k,n)}(w),
\]
and $Q_x(w) \defeq\delta_1(w)$ otherwise,
where $\varepsilon_k \defeq10^{-k}$ and $a(k,n) \defeq2^{-10^k+n}$,
and the constants $b(k,n)\in(1,\infty)$ are chosen so that $Q_x(w)$
have expectation
one. Define the functions
\[
f_k(x,w) \defeq\cases{ +1, &\quad $\mbox{if $x=10^k + n$ with
$n\in\bigl[1,10^k\bigr]$ odd and $w=a(k,n)$},$ \vspace*{2pt}
\cr
-1, &\quad
$\mbox{if $x=10^k + n$ with $n\in\bigl[1,10^k\bigr]$ even
and $w=a(k,n)$},$ \vspace*{2pt}
\cr
0,&\quad $\mbox{otherwise.}$ }
\]
A straightforward calculation shows that
$\lim_{k \rightarrow\infty}
\langle f_k, \tilde{P} f_k \rangle_{\tilde{\pi}} / \|f_k\|_{\tilde
{\pi
}}^2 =-1$,
which shows that there is no left spectral gap.
See \cite{andrieu-vihola-pseudo-arxiv}, Appendix E, for details.
\end{example}
%


\section{Convergence of the asymptotic variance}
\label{141414141414} 

In standard applications of the pseudo-marginal algorithm, one typically
selects $Q_x$ from a family of possible proposal distributions
$Q_{x}^N$ indexed by some precision parameter $N$ which reflects the
concentration of $W$ on $1$.
In most relevant scenarios we are aware of, $N\in\N$ corresponds to
the number
of samples, particles or iterates of an algorithm used to compute an
unbiased estimator of the density
value, as exemplified in \eqref{eq:is-w}. It should be clear that this
is not a restriction. Hereafter, we denote the pseudo-marginal kernels
and the
invariant measures associated with $Q_x^N$ as $\tilde{P}_N$ and
$\tilde{\pi}_N$, respectively.

It is easy to see that if for all $x\in\mathsf{X}$, $Q_x^N(\ud w)w
\to
\delta_1(\ud w)$ as $N\to\infty$ weakly, then $\tilde{\pi}_N(\ud
x,\ud
w)\to\pi(\ud x)\delta_1(\ud w)$ weakly, suggesting that a
pseudo-marginal algorithm with invariant distribution $\tilde{\pi}_N$
may become similar to the marginal algorithm with invariant
distribution $\pi$ as $N\rightarrow\infty$. As pointed out earlier,
whenever $W_x$ is not bounded uniformly, a pseudo-marginal algorithm
cannot be geometric, although its marginal algorithm may be. In fact
it was shown in \cite{andrieu-roberts}, Remark~1, that even in
situations where the weights are uniformly bounded and the
pseudo-marginal algorithm is uniformly ergodic, increasing $N$ may
not improve the rate of convergence of the algorithm, that is, there is
not convergence in terms of rate of convergence.

In this section we, however, show that in many
situations such a convergence takes place in terms of the
asymptotic variance, or equivalently, the integrated autocorrelation
time; see Definition~\ref{def:asvar}. More precisely, we show here that
under simple conditions $\var(g,\tilde{P}_N) \rightarrow\var(g,P)$ as
$N\rightarrow\infty$. We start with a very
simple result, which is a direct consequence of Corollary~\ref{cor:autocorr-geom}.

%
\begin{proposition} 
Suppose that the marginal kernel $P$ has a nonzero spectral gap
and the weight distributions are bounded uniformly in $x\in\mathsf{X}$
by $\bar{w}^N\in(1,\infty)$, that is, $Q_x^N([0,\bar{w}^N])=1$
for all $x\in\mathsf{X}$ and $N\ge N_0$ for some $N_0\in\mathbb
{N}$, and
$\lim_{N\to\infty} \bar{w}^N = 1$.
Then, $\lim_{N\to\infty} \var(g,\tilde{P}_N) = \var(g,P)$
for any $g\dvtx\mathsf{X}\to\R$ with $\pi(g^2)<\infty$.
\end{proposition}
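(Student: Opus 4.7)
The plan is to obtain the conclusion as a direct application of Corollary \ref{cor:autocorr-geom}, with the sandwich theorem doing the remaining work. Specifically, for each $N\ge N_0$ the hypothesis $Q_x^N([0,\bar{w}^N])=1$ (uniformly in $x$) together with $\Gap(P)>0$ puts us exactly in the setting of that corollary, applied with $\bar{w}\defeq \bar{w}^N\in[1,\infty)$. Thus for every such $N$ we obtain the two-sided bound
\[
    \var(g,P) \;\le\; \var(g,\tilde{P}_N) \;\le\; \bar{w}^N\var(g,P) + (\bar{w}^N-1)\var_\pi(g).
\]

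Next I would check that the upper bound is finite and tends to $\var(g,P)$. Finiteness of $\var_\pi(g)$ is immediate from $\pi(g^2)<\infty$. Finiteness of $\var(g,P)$ follows from the standard spectral-gap inequality $\var(g,P)\le \frac{2-\Gap(P)}{\Gap(P)}\var_\pi(g)$, which is a consequence of Lemma \ref{lem:asvar-expressions} together with $\Gap(P)>0$ (and this is already implicit in Corollary \ref{cor:autocorr-geom} being non-vacuous). Given $\bar{w}^N\to 1$, the right-hand side of the displayed inequality therefore converges to $\var(g,P)$, while the left-hand side is constant equal to $\var(g,P)$. Squeezing yields $\lim_{N\to\infty}\var(g,\tilde{P}_N)=\var(g,P)$.

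There is essentially no obstacle here: all the real work is already absorbed into Corollary \ref{cor:autocorr-geom}, which itself rests on the Dirichlet-form comparison of Proposition \ref{prop:pseudo-bar-p} and the operator-theoretic inequality comparing resolvents. The only detail to articulate carefully in writing is that the assumption $Q_x^N([0,\bar{w}^N])=1$ for all $x\in\mathsf{X}$ (not only $\pi$-a.e.) trivially implies the $\pi$-a.e.\ bound \eqref{eq:weight-as-bounded} needed by Proposition \ref{prop:pseudo-bar-p} and Corollary \ref{cor:autocorr-geom}, and that eventual validity (for $N\ge N_0$) is sufficient for the limit statement.
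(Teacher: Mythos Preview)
Your proposal is correct and follows exactly the same approach as the paper, which simply states that the result is a direct consequence of Corollary \ref{cor:autocorr-geom}. Your write-up is in fact more detailed than the paper's one-line proof, and the added remarks on finiteness of $\var(g,P)$ and the a.e.\ versus everywhere hypothesis are accurate and harmless.
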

%

\begin{pf} 
The result is direct consequence of Corollary~\ref{cor:autocorr-geom}.
\end{pf}
%

We now extend this result to situations where the distributions
$\{Q_x^N\}_{N\in\N}$ may have an unbounded support, and therefore
$\{\tilde{P}_N\}_{N\in\N}$ may not be geometrically ergodic.
We formulate our result in terms of the following technical
condition assuming uniform convergence of the integrated
autocorrelation series. We will return to this assumption toward the
end of this section and show that it can be
checked in practice with for example Lyapunov type drift conditions;
see Proposition~\ref{prop:drift-implies-tailcond}.

%
\begin{condition}
\label{cond:implicit-autocorr} 
For $g\dvtx\mathsf{X}\rightarrow\mathbb{R}$, suppose that the integrated
autocorrelation time $\tau(g,P)$
(Definition~\ref{def:asvar}) is well defined and finite.
Denote by $(\tilde{X}_k^N)_{k\ge0}$
the Markov chain with initial distribution $\tilde{\pi}_N$
and kernel $\tilde{P}_N$. Assume that
there exists a constant $N_0<\infty$ such that
\[
\lim_{n\to\infty} \sup_{N\ge N_0}\Biggl | \sum
_{k=n}^\infty \E\bigl[ \bar{g}\bigl(
\tilde{X}_0^N\bigr)\bar{g}\bigl(\tilde{X}_k^N
\bigr)\bigr] \Biggr| 
= 0 \qquad\mbox{where } \bar{g} = g -
\pi(g).
\]
\end{condition}

%
The main result of this section is the following:

\begin{theorem}
\label{th:autocorr-conv-coupling} 
Assume that
$g\dvtx\mathsf{X}\to\R$ satisfies $\pi(|g|^{2+\delta})<\infty$,
and Condition \ref{cond:implicit-autocorr} holds for $g$.
Suppose also that
%
\begin{equation}
\lim_{N\to\infty} \int Q_x^N(\ud w) |1-w| =
0 \qquad\mbox{for all $x\in\mathsf{X}$.} 
\label{eq:L1-convergence}
\end{equation}
Then, $\lim_{N\to\infty} \var(g,\tilde{P}_N) = \var(g,P)$.
\end{theorem}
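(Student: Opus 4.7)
The plan is to expand both asymptotic variances as autocovariance series and show they match in the limit. Writing $\bar g=g-\pi(g)$ and setting $c_k^N\defeq\E[\bar g(\tilde X_0^N)\bar g(\tilde X_k^N)]$ at stationarity under $\tilde P_N$, with $c_k$ analogous under $P$, Definition \ref{def:asvar} combined with Lemma \ref{lem:asvar-expressions} (and noting that $\var_{\tilde\pi_N}(\tilde g)=\var_\pi(g)$ because $\tilde g(x,\cdot)\equiv g(x)$ and the $x$-marginal of $\tilde\pi_N$ is $\pi$) gives
\[
  \var(g,\tilde P_N)=\var_\pi(g)+2\sum_{k\ge 1}c_k^N,\qquad
  \var(g,P)=\var_\pi(g)+2\sum_{k\ge 1}c_k.
\]
Condition \ref{cond:implicit-autocorr} together with the finiteness of $\tau(g,P)$ then yields a uniform truncation: for any $\epsilon>0$ there is $n$ with $|\sum_{k>n}c_k^N|<\epsilon$ for all $N\ge N_0$ and $|\sum_{k>n}c_k|<\epsilon$. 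It therefore suffices to prove $c_k^N\to c_k$ as $N\to\infty$ for each fixed $k\ge 1$.

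To establish the latter, I would construct a coupling of the two chains on a common probability space. Start with $X_0=\tilde X_0^N\sim\pi$ and $W_0^N\mid X_0\sim\pi_{X_0}^N$; at each step $j$, while $\tilde X_{j-1}^N=X_{j-1}$, draw a common proposal $Y\sim q(X_{j-1},\cdot)$, a common auxiliary $U\sim Q_Y^N$, and a common uniform $V$, setting $X_j=Y$ iff $V\le\min\{1,r(X_{j-1},Y)\}$ and $\tilde X_j^N=Y$ iff $V\le\min\{1,r(X_{j-1},Y)U/W_{j-1}^N\}$; after any disagreement, the two chains evolve independently. Letting $T_k$ be the event that $\tilde X_j^N=X_j$ for every $j\le k$,
\[
  c_k^N-c_k=\E\bigl[\bar g(X_0)\bigl(\bar g(\tilde X_k^N)-\bar g(X_k)\bigr)\charfun{T_k^c}\bigr],
\]
since the chains share $X_0$ and agree at step $k$ on $T_k$. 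H\"{o}lder's inequality (this is where I use $\pi(|g|^{2+\delta})<\infty$) then bounds $|c_k^N-c_k|$ by $C(g)\,\P(T_k^c)^{\delta/(2+\delta)}$ with $C(g)=2\pi(|\bar g|^{2+\delta})^{2/(2+\delta)}$, so the task reduces to $\P(T_k^c)\to 0$.

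By a union bound, $\P(T_k^c)\le k D_N$ where the per-step disagreement probability is
\[
  D_N=\int\tilde\pi_N(\ud x,\ud w)q(x,\ud y)Q_y^N(\ud u)\bigl|\min\{1,r(x,y)u/w\}-\min\{1,r(x,y)\}\bigr|.
\]
Using the Lipschitz bound $|\min\{1,ru/w\}-\min\{1,r\}|\le r|u-w|/w$ and unfolding $\tilde\pi_N(\ud x,\ud w)=\pi(\ud x)Q_x^N(\ud w)w$ cancels the $w$ in the denominator; the Radon-Nikodym identity $\pi(\ud x)q(x,\ud y)r(x,y)=\pi(\ud y)q(y,\ud x)$ absorbs the potentially unbounded factor $r$; and a triangle bound $|u-w|\le|u-1|+|w-1|$ yields
\[
  D_N\le\int\pi(\ud y)\int Q_y^N(\ud u)|u-1|+\int(\pi q)(\ud x)\int Q_x^N(\ud w)|w-1|.
\]
Each inner integrand is bounded by $2$ and tends to zero pointwise by \eqref{eq:L1-convergence}, so dominated convergence against the probability measures $\pi$ and $\pi q$ delivers $D_N\to 0$, hence $\P(T_k^c)\to 0$ for every fixed $k$.

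The main obstacle is that $r(x,y)$ is generally unbounded, so a naive Lipschitz estimate on the acceptance probabilities is not integrable on its own; the reversibility identity is precisely what saves the argument, trading the $r$-weight for an exchange of $\pi$-mass between $x$ and $y$ and reducing everything to quantities controlled by \eqref{eq:L1-convergence}. A secondary subtlety is that the push-forward $\pi q$ need not be absolutely continuous with respect to $\pi$, but this is harmless: the integrand is uniformly bounded, so dominated convergence applies against any probability measure. The role of the $(2+\delta)$-moment assumption is purely quantitative, converting the probability bound $\P(T_k^c)\to 0$ into the required $c_k^N\to c_k$ for unbounded $g$.
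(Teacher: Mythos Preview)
Your argument is correct and is in fact a cleaner, more direct route than the paper's. Both proofs share the same skeleton: expand into autocovariances, truncate the tails uniformly via Condition \ref{cond:implicit-autocorr}, and control the remaining finite block by coupling. The difference lies in how the coupling is organised. The paper couples $\tilde P_N$ not to $P$ but to the auxiliary kernel $\bar P_N$ of \eqref{eq:bar-p}, exploiting that $(\bar X_k^N)$ and $(X_k)$ have the same law; it then bounds the path-space total variation $\|\tilde\mu-\bar\mu\|$ via Lemma \ref{lem:total-variation-bounds} (which invokes Egorov's theorem to upgrade pointwise to uniform convergence on a large set $\check C$) and Lemma \ref{lem:coupling-details}. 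You instead couple $P$ and $\tilde P_N$ directly with common proposals and uniforms, bound the per-step disagreement probability $D_N$ in closed form, and show $D_N\to 0$ by dominated convergence alone. The key device that makes your direct approach work is the reversibility identity $\pi(\ud x)q(x,\ud y)r(x,y)=\pi(\ud y)q(y,\ud x)$, which absorbs the unbounded factor $r$ into a probability measure; the paper circumvents this issue differently, by working with the tighter bound $|\min\{1,ab\}-\min\{1,a\}|\le\min\{1,a\}|1-b|$ inside its total variation lemma. Your route buys simplicity (no Egorov, no auxiliary kernel, no path-TV lemma), while the paper's modular lemmas are reusable and make the rate dependence on $N$ somewhat more explicit.
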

%

\begin{pf} 
If $\var_\pi(g)=0$, the claim is trivial. If $\var_\pi(g)>0$,
our conditions imply that the autocorrelation times exist and are
finite for both the marginal kernel $P$ and the pseudo-marginal kernels
$\tilde{P}_N$ for $N\ge N_0$; this follows from the finiteness of the
terms in the autocorrelation series ensured by the Cauchy--Schwarz
inequality and Condition \ref{cond:implicit-autocorr}.
Therefore,
without loss of generality, we prove the claim for autocorrelation times
$\tau(g,\tilde{P}_N)\to\tau(g,P)$ for a function $g$ with
$\tilde{\pi}_N(g)=\pi(g)=0$ and $\tilde{\pi}_N(g^2)=\pi(g^2)=1$.

Consider the Markov kernels $\bar{P}_N$ defined as in \eqref{eq:bar-p}
with $Q_x^N$ and $\pi_x^N(\ud w)\defeq Q_x^N(\ud w) w$.
Denote by $(\bar{X}_k^N,\bar{W}_k^N)_{k\ge0}$ the corresponding stationary
Markov chain with $(\bar{X}_0^N,\bar{W}_0^N)\sim\tilde{\pi}_N$. Denote
similarly
$(\tilde{X}_k^N,\tilde{W}_k^N)_{k\ge0}$ the stationary
Markov chain corresponding to the kernel
$\tilde{P}_N$ with $(\tilde{X}_0^N,\tilde{W}_0^N)\sim\tilde{\pi}_N$.
Notice that $\bar{P}_N$ and $\tilde{\pi}_N$ coincide marginally with
$P$ and $\pi$, respectively; that is,
$(\bar{X}_k^N)_{k\ge0}$ has the same distribution as that of the stationary
marginal chain $(X_k)_{k\ge0}$ with kernel $P$ and such that $X_0\sim
\pi$.

Choose $\varepsilon\in(0,1)$ and
let $n_0=n_0(\varepsilon)<\infty$ be
such that for all $N\ge N_0$,
%
\begin{equation}
\Biggl|\sum_{k=n_0}^\infty \E\bigl[ g\bigl(
\tilde{X}_0^N\bigr)g\bigl(\tilde{X}_k^N
\bigr)\bigr] \Biggr| \le\varepsilon\quad\mbox{and}\quad
\Biggl |\sum_{k=n_0}^\infty
\E\bigl[ g(X_0)g(X_k)\bigr] \Biggr| \le\varepsilon,
\label{eq:tail-autocorr-sum}
\end{equation}
where the existence of $n_0$ follows from Condition \ref
{cond:implicit-autocorr}.
We have for $N\ge N_0$,
\[
\bigl|\tau(g,P) - \tau(g,\tilde{P}_{N})\bigr|
\le4\varepsilon + 2\Biggl |\sum
_{k=1}^{n_0-1} \E \bigl[ g\bigl(
\tilde{X}_0^N\bigr)g\bigl(\tilde {X}_k^N
\bigr) \bigr] - \E \bigl[g(\bar{X}_0)g(\bar{X}_k) \bigr] \Biggr|.
\]

In order to control the last term,
we consider a coupling argument.
Denote $q\defeq(2 + \delta)/\delta\in(1,\infty)$.
Lemma~\ref{lem:total-variation-bounds} applied with $\check
{\varepsilon}
= \varepsilon n_0^{-q-1}/2$ implies the existence of
$N_1<\infty$ and a set
$\bar{C}\in\B(\mathsf{X})\times\B(\mathsf{W})$ such that
for all $N\ge N_1$,
\begin{eqnarray*}
\tilde{\pi}_N\bigl(\bar{C}^\complement\bigr)&\le&\varepsilon
n_0^{-q-1}/2,
\\
\bigl\|\tilde{P}_N(x,w;\uarg) - \bar{P}_N(x,w;\uarg)\bigr\| & \le&
\varepsilon n_0^{-q-1}/2\qquad \mbox{for all } (x,w)\in\bar{C}.
\end{eqnarray*}
Lemma~\ref{lem:coupling-details} in Appendix \ref{141414141414-lemmas}
applied to $(\tilde{X}^N_k,\tilde{W}^N_k)_{0\le k\le n_0-1}$ and
$(\bar{X}^N_k,\break \bar{W}^N_k)_{0\le k\le n_0-1}$ with the set
$\bar{C}$ shows that the laws of these processes,
$\tilde{\mu}$ and $\bar{\mu}$, respectively,
satisfy the following total variation inequality
for all $N\ge N_1$,
\[
\| \tilde{\mu} - \bar{\mu} \| \le2n_0 \tilde{\pi}_N
\bigl(\bar{C}^\complement\bigr) + n_0 \sup_{(x,w)\in\bar{C}}
\bigl\| \tilde{P}^N(x,w;\uarg) - \bar{P}^N(x,w; \uarg) \bigr\|
\le2 \varepsilon n_0^{-q}.
\]
Therefore, for all $N\ge N_1$, there exists a probability space
$(\bar{\Omega}_N,\bar{\P}_N,\bar{\F}_N)$ where both
$(\tilde{X}_k^N,\tilde{W}_k^N)_{0\le k\le n_0-1}$
and $(\bar{X}_k^N,\bar{W}_k^N)_{0\le k\le n_0-1}$ are defined,
and the set
\[
\bar{A}_N \defeq\bigl\{ \bigl(\tilde{X}_k^N,
\tilde{W}_k^N\bigr)\equiv\bigl(\bar{X}_k^N,
\bar{W}_k^N\bigr), 0 \le k \le n_0-1 \bigr\}
\]
satisfies $\bar{\P}_N(\bar{A}_N^\complement) = \frac{1}{2}\|
\tilde{\mu
} -
\bar{\mu} \| \le\varepsilon n_0^{-q}$ (e.g., \cite{lindvall}, Theorem~5.2).
Denote $p=1+\delta/2$, and note that $p^{-1} + q^{-1} = 1$. Now
for $N\ge N_1$,
\begin{eqnarray*}
&&\Biggl|\sum_{k=1}^{n_0-1} \E \bigl[ g\bigl(
\tilde{X}_0^N\bigr)g\bigl(\tilde {X}_k^N
\bigr) \bigr] - \E \bigl[g\bigl(\bar{X}_0^N\bigr)g\bigl(
\bar{X}_k^N\bigr) \bigr]\Biggr |
\\
&&\qquad=\Biggl |\bar{\E}_N \Biggl[\sum_{k=1}^{n_0-1}
g\bigl(\tilde{X}_0^N\bigr)g\bigl(\tilde{X}_k^N
\bigr) - g\bigl(\bar{X}_0^N\bigr)g\bigl(
\bar{X}_k^N\bigr) \Biggr] \Biggr|
\\
&&\qquad\le \bigl(\bar{\P}_N\bigl(\bar{A}_N^\complement
\bigr) \bigr)^{1/q} \Biggl\{ \Biggl(\bar{\E}_N \Biggl|\sum
_{k=1}^{n_0-1} g\bigl(\tilde{X}_0^N
\bigr)g\bigl(\tilde{X}_k^N\bigr) - g\bigl(
\bar{X}_0^N\bigr)g\bigl(\bar{X}_k^N
\bigr) \Biggr|^p \Biggr)^{1/p} \Biggr\}
\\
&&\qquad\le \bigl(\bar{\P}_N\bigl(\bar{A}_N^\complement
\bigr) \bigr)^{1/q} (n_0-1)\\
&&\qquad\quad{}\times
\max_{1\le
k\le n_0-1} \bigl[ \bigl(\E\bigl| g\bigl(
\tilde{X}_0^N\bigr)g\bigl(\tilde{X}_k^N
\bigr)\bigr|^p \bigr)^{1/p} + \bigl(\E\bigl|g(X_0)g(X_k)\bigr|^p
\bigr)^{1/p} \bigr]
\\
&&\qquad\le2 \varepsilon^{1/q} \bigl(\pi\bigl(|g|^{2+\delta}\bigr)
\bigr)^{1/(2p)},
\end{eqnarray*}
by the H\"{o}lder, Minkowski and Cauchy--Schwarz inequalities.
\end{pf}

Let $\mu_1$ and $\mu_2$ be two probability distributions on
the space $(\mathsf{E},\B(\mathsf{E}))$. We define
the total variation distance
$
\| \mu_1 - \mu_2 \| \defeq
\sup_{|f|\le1} |\mu_1(f)-\mu_2(f)|
= 2 \sup_{0\le f\le1} |\mu_1(f)-\mu_2(f)|
= 2 \sup_{A\in\B(\mathsf{E})}
|\mu_1(A)-\mu_2(A)|$.

%
\begin{lemma}
\label{lem:total-variation-bounds} 
Assume that \eqref{eq:L1-convergence} is satisfied.
Then, for any $\check{\varepsilon}>0$ there exists a
$N_1<\infty$ and a set $\check{C}\in\B(\mathsf{X})\times\B
(\mathsf{W})$
such that
for all $N\ge N_1$,
\begin{eqnarray*}
\tilde{\pi}_N\bigl(\check{C}^\complement\bigr)&\le&\check{
\varepsilon},
\\
\bigl\|\tilde{P}_N(x,w;\uarg) - \bar{P}_N(x,w;\uarg)\bigr\| & \le&
\check{\varepsilon} \qquad\mbox{for all } (x,w)\in\check{C}.
\end{eqnarray*}
\end{lemma}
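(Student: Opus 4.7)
The plan is to bound the total variation directly from the kernel expressions. Writing
\[
\tilde{P}_N(x,w;\ud y,\ud u) - \bar{P}_N(x,w;\ud y,\ud u) = q(x,\ud y)Q_y^N(\ud u)\Delta_N + \bigl(\tilde{\rho}_N(x,w)-\rho(x)\bigr)\delta_{x,w}(\ud y,\ud u)
\]
with $\Delta_N(x,y,u,w) \defeq \min\{1,r(x,y)u/w\} - u\min\{1,r(x,y)\}$, and noting that $\int Q_y^N(\ud u)u=1$ forces $\rho(x)-\tilde{\rho}_N(x,w) = \int q(x,\ud y)Q_y^N(\ud u)\Delta_N$ by conservation of total mass, one obtains
\[
\|\tilde{P}_N(x,w;\uarg) - \bar{P}_N(x,w;\uarg)\| \le 2\int q(x,\ud y)Q_y^N(\ud u)|\Delta_N|.
\]

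Next I would control $|\Delta_N|$ via the decomposition $\Delta_N = [\min\{1,r(x,y)u/w\}-\min\{1,r(x,y)\}] + (1-u)\min\{1,r(x,y)\}$ and the elementary inequality $|\min\{1,ab\}-\min\{1,a\}| \le \min\{1,a\}|b-1|$ for $a,b\ge 0$, which can be established by case analysis on whether $a\le 1$ and $ab\le 1$. Applied with $a=r(x,y)$ and $b=u/w$, together with $|u/w-1|\le (|u-1|+|1-w|)/w$, this yields, for $w\ge 1/2$,
\[
|\Delta_N| \le \min\{1,r(x,y)\}\bigl[3|u-1| + 2|1-w|\bigr].
\]
Integrating gives $\|\tilde{P}_N(x,w;\uarg)-\bar{P}_N(x,w;\uarg)\| \le 6 H_N(x) + 4|1-w|$, where $h_N(y) \defeq \int Q_y^N(\ud u)|1-u|$ and $H_N(x) \defeq \int q(x,\ud y)\min\{1,r(x,y)\}\, h_N(y)$.

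Assumption \eqref{eq:L1-convergence} says $h_N(y)\to 0$ for every $y\in\mathsf{X}$, and $h_N\le 2$ because $\int u\, Q_y^N(\ud u)=1$. Dominated convergence then yields $H_N(x)\to 0$ for every $x\in\mathsf{X}$. Applying Egorov's theorem on the probability space $(\mathsf{X},\B(\mathsf{X}),\pi)$ produces, for any $\eta>0$, a set $C_0\in\B(\mathsf{X})$ with $\pi(C_0^\complement)\le\eta$ on which $H_N\to 0$ uniformly. Setting $\gamma \defeq \min\{1/2,\check{\epsilon}/8\}$ and $\check{C}\defeq C_0\times[1-\gamma,1+\gamma]$, and choosing $N_1$ so large that $\sup_{x\in C_0} H_N(x)\le \check{\epsilon}/12$ for all $N\ge N_1$, the TV bound above delivers $\|\tilde{P}_N(x,w;\uarg)-\bar{P}_N(x,w;\uarg)\| \le \check{\epsilon}$ on $\check{C}$.

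Finally, I would bound $\tilde{\pi}_N(\check{C}^\complement) \le \pi(C_0^\complement) + \tilde{\pi}_N(\{|1-w|>\gamma\})$, and control the second term using $\int_{|1-w|>\gamma} Q_x^N(\ud w)\,w \lesssim h_N(x)/\gamma$ (a Markov-type estimate combined with $\int(w-1)_+\,Q_x^N(\ud w) = h_N(x)/2$) together with $\int\pi(\ud x)\,h_N(x)\to 0$ by dominated convergence. Choosing $\eta = \check{\epsilon}/2$ and enlarging $N_1$ if necessary completes the construction. The delicate step is that \eqref{eq:L1-convergence} gives only pointwise, not uniform, convergence of $h_N$; producing a single set $C_0$, independent of $N$, on which $H_N$ is uniformly small is what genuinely requires Egorov's theorem.
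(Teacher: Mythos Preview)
Your proof is correct and follows essentially the same route as the paper's. Both arguments bound the total variation by $2\int q(x,\ud y)Q_y^N(\ud u)|\Delta_N|$, use the elementary inequality $|\min\{1,ab\}-\min\{1,a\}|\le \min\{1,a\}|b-1|$, apply dominated convergence to obtain pointwise convergence in $x$, invoke Egorov's theorem to upgrade to uniform convergence on a set $C_0$ of nearly full $\pi$-measure, and take $\check{C}=C_0\times[\text{interval near }1]$. The only cosmetic difference is that the paper applies Egorov simultaneously to $\pi_x^N([\bar w^{-1},\bar w])$ and to $\int q(x,\ud y)h_N(y)$, whereas you apply Egorov only to $H_N$ and handle $\tilde\pi_N(\{|1-w|>\gamma\})$ separately via a Markov-type bound plus dominated convergence on $\int\pi(\ud x)h_N(x)$; both work equally well.
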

%

\begin{pf} 
Choose $\check{\varepsilon}>0$, and let $\bar{w} \defeq1 + \check
{\varepsilon}/8$.
It is not difficult to see that assumption \eqref{eq:L1-convergence} implies
for all $x\in\mathsf{X}$,
%
\begin{equation}
\lim_{N\to\infty} \pi_x^N\bigl(\bigl[
\bar{w}^{-1},\bar{w}\bigr]\bigr)  
= 1.
\label{eq:qw-conv}
\end{equation}
Because $\int Q^N_y(\ud u) |1 -u|\le2$,
the dominated convergence theorem together with~\eqref
{eq:L1-convergence} implies
for all $x\in\mathsf{X}$,
%
\begin{equation}
\lim_{N\to\infty} \int q(x,\ud y) Q^N_y(
\ud u) |1-u| = 0. \label{eq:pointwise-convergence}
\end{equation}
By Egorov's theorem, there exists a set $C\in\B(\mathsf{X})$ such that
$\pi(C^\complement)\le\check{\varepsilon}/2$ and the convergence
in both
\eqref{eq:qw-conv} and \eqref{eq:pointwise-convergence}
is uniform in $x$.

For any $x\in\mathsf{X}$, any $w>0$ and any set $A\in
\B(\mathsf{X})\times\B(\mathsf{W})$,
\begin{eqnarray*}
&&\bigl|\tilde{P}_N(x,w;A) - \bar{P}_N(x,w;A)\bigr|
\\
&&\qquad\le2\int q(x,\ud y) Q^N_y(\ud u)\biggl | \min\bigl
\{1,r(x,y)\bigr\} u - \min \biggl\{1,r(x,y)\frac{u}{w} \biggr\} \biggr|
\\
&&\qquad\le2 \int q(x,\ud y) Q^N_y(\ud u) \biggl[|1-u| + \biggl|
\min\bigl\{1,r(x,y)\bigr\} - \min \biggl\{1,r(x,y)\frac{u}{w} \biggr\} \biggr|
\biggr]
\\
&&\qquad\le2 \int q(x,\ud y) Q^N_y(\ud u) \biggl[|1-u| +\biggl |1-
\frac{u}{w} \biggr| \biggr]
\\
&&\qquad\le2 \biggl|1-\frac{1}{w}\biggr | + 4 \int q(x,\ud y) Q^N_y(
\ud u) |1-u|,
\end{eqnarray*}
where the third inequality follows
because
\[
\bigl|\min\{1,ab\} - \min\{1,a\}\bigr| \le\min\{1,a\}|1-b| \qquad
\mbox{for any $a,b\ge0$}.
\]
%
Therefore,
letting $\check{C} \defeq C\times[\bar{w}^{-1},\bar{w}]$,
we can bound the total variation by
\[
\sup_{(x,w)\in\check{C}}\bigl \|\tilde{P}_N(x,w;\uarg) -
\bar{P}_N(x,w;\uarg)\bigr\| \le\frac{\check{\varepsilon}}{2} + 8 \sup
_{x\in C} \int q(x,\ud y) Q_y^N(\ud u)
|1-u|.
\]
Because $\lim_{N\to\infty} \tilde{\pi}_N(\check{C}^\complement)
= \pi(C^\complement)$,
we may conclude by
choosing $N_1<\infty$ such that
$\sup_{x\in C} \int q(x,\ud y) Q_y^N(\ud u) |1-u|\le
\check{\varepsilon}/16$ and
$\tilde{\pi}_N(\check{C}^\complement) \le\check{\varepsilon}$
for all $N\ge N_1$.
\end{pf}
%

%
\begin{remark} 
With additional assumptions in Condition \ref{cond:implicit-autocorr}
and \eqref{eq:L1-convergence} on the rates of convergence, one could obtain
a rate of convergence in Theorem~\ref{th:autocorr-conv-coupling}, that
is find $\{r(n)\}_{n\in\mathbb{N}}$ such that
\[
\bigl|\var(g,\tilde{P}_N)-\var(g,P)\bigr|\le r(N)\to0 \qquad\mbox{as $N\to\infty$},
\]
by going through the proofs of Theorem~\ref{th:autocorr-conv-coupling}
and Lemma~\ref{lem:total-variation-bounds}.
\end{remark}
%

We now provide sufficient conditions implying the conditions
of Theorem~\ref{th:autocorr-conv-coupling}.
Condition \ref{cond:implicit-autocorr} which essentially require quantitative
bounds on the ergodic behaviour of the
pseudo-marginal Markov chains.
Our results rely on polynomial drift conditions which we establish for
some standard algorithms in Sections~\ref{151515151515} and~\ref{171717171717}. Weaker
drift conditions can be shown to imply Condition \ref{cond:implicit-autocorr}
(e.g., \cite{andrieu-fort-explicit,andrieu-fort-vihola-subgeom}), but
we do
not detail this here in order to keep presentation simple.

%
\begin{condition}
\label{cond:unifdrift} 
There exists a function $V\dvtx\mathsf{X}\times\mathsf{W}\to
[1,\infty)$,
a set $C\in\B(\mathsf{X})\times\B(\mathsf{W})$
with $\sup_{(x,w)\in C} V(x,w)<\infty$, constants
$\alpha\in(0,1]$, $b_V\in[0,\infty)$,
$\varepsilon_V\in(0,\infty)$
and
$N_0<\infty$,
such that for all $N\ge N_0$
\[
\tilde{P}_{N}V(x,w)\le V(x,w)-\varepsilon_V
V^{\alpha}(x,w)+b_{V}\mathbbm{I} \bigl\{ (x,w)\in C \bigr\}
\qquad\mbox{$\forall x\in\mathsf{X},w\in\mathsf{W}$,}
\]
and for any $v\in[1,\infty)$, there exists probability measures
$\{\nu^{N}\}_{N\ge N_0}$ and a constant
$\varepsilon_v\in(0,1]$, such that for all $N\ge N_0$,
\[
\tilde{P}_{N}(x,w; \uarg) \ge\varepsilon_v
\nu^{N}(\uarg)\qquad \mbox{for all $(x,w)\in\mathsf{X}\times\mathsf{W}$ such
that $V(x,w)\le v$.}
\]
\end{condition}
%

%
\begin{proposition}
\label{prop:drift-implies-tailcond} 
Assume Condition \ref{cond:unifdrift} holds for the pseudo-marginal
kernels $\tilde{P}_N$, and that for some $\lambda\in[0,1)$ and
$\kappa\in[0,1)$,
\begin{eqnarray*}
\| g\|_{V^{\alpha_{\kappa,\lambda}}} = \sup_{(x,w)
\in\mathsf{X}\times\mathsf{W}} \frac{|g(x)|}
{V^{\alpha_{\kappa,\lambda}}(x,w)} &<&
\infty,
\\
\sup
_{N\ge N_0} \tilde{\pi}_N \bigl(\bigl(|g|+1\bigr) V^{1-\lambda\alpha}
\bigr) &<&\infty,
\end{eqnarray*}
where $\alpha_{\kappa,\lambda} \defeq
\kappa\alpha(1-\lambda)$.
Then Condition
\ref{cond:implicit-autocorr} holds.
\end{proposition}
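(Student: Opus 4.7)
The plan is to verify Condition \ref{cond:implicit-autocorr} by estimating each autocovariance via a uniform sub-geometric ergodicity bound and summing the polynomial tail. First, using stationarity and $\tilde{\pi}_N(\bar{g})=\pi(\bar{g})=0$, I rewrite
\[
    \E\bigl[\bar{g}(\tilde{X}_0^N)\bar{g}(\tilde{X}_k^N)\bigr]
    = \int \tilde{\pi}_N(\ud x,\ud w)\, \bar{g}(x)\, \tilde{P}_N^k\bar{g}(x,w).
\]
Note that $\pi(|g|)=\tilde{\pi}_N(|g|)$ is finite: since $V\ge 1$ the moment hypothesis is at least as strong as $\sup_N \tilde{\pi}_N(|g|+1)<\infty$, so $|\pi(g)|\le\pi(|g|)<\infty$ and $\bar{g}=g-\pi(g)$ is a well-defined function.

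The main tool is a quantitative polynomial ergodicity bound valid uniformly in $N\ge N_0$. Under the drift $\tilde{P}_N V\le V-\epsilon_V V^\alpha+b_V\charfun{C}$ together with the sublevel-set minorization supplied by Condition \ref{cond:unifdrift}, standard sub-geometric ergodicity results \cite[e.g.][]{andrieu-fort-explicit,andrieu-fort-vihola-subgeom} yield, for the given $\lambda,\kappa\in[0,1)$, a finite constant $\bar{C}$ and a summable polynomial sequence $\{\rho_k\}_{k\ge 1}$ such that, uniformly in $N\ge N_0$ and in all measurable $h$ with $\|h\|_{V^{\alpha_{\kappa,\lambda}}}<\infty$,
\[
    \bigl|\tilde{P}_N^k h(x,w)-\tilde{\pi}_N(h)\bigr|
    \le \bar{C}\,\|h\|_{V^{\alpha_{\kappa,\lambda}}}\,V^{1-\lambda\alpha}(x,w)\,\rho_k.
\]
I apply this with $h=\bar{g}$, whose norm satisfies $\|\bar{g}\|_{V^{\alpha_{\kappa,\lambda}}}\le \|g\|_{V^{\alpha_{\kappa,\lambda}}}+|\pi(g)|<\infty$ since $V\ge 1$. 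Integrating against $\tilde{\pi}_N(\ud x,\ud w)|\bar{g}(x)|$ and using $|\bar{g}|\le |g|+|\pi(g)|$ then gives
\[
    \bigl|\E[\bar{g}(\tilde{X}_0^N)\bar{g}(\tilde{X}_k^N)]\bigr|
    \le \bar{C}\,\|\bar{g}\|_{V^{\alpha_{\kappa,\lambda}}}\,\rho_k\,
    \bigl(1+|\pi(g)|\bigr)\,\sup_{N\ge N_0}\tilde{\pi}_N\bigl((|g|+1)V^{1-\lambda\alpha}\bigr),
\]
and the right-hand side is of the form $C'\rho_k$ with $C'$ independent of $N$. Summing over $k\ge n$ and using summability of $\{\rho_k\}$ yields
\[
    \sup_{N\ge N_0}\bigg|\sum_{k=n}^\infty \E[\bar{g}(\tilde{X}_0^N)\bar{g}(\tilde{X}_k^N)]\bigg|
    \le C' \sum_{k=n}^\infty \rho_k \xrightarrow{n\to\infty} 0,
\]
which is Condition \ref{cond:implicit-autocorr}.

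The main obstacle is ensuring that the constants in the sub-geometric ergodicity bound can be taken independent of $N$. Condition \ref{cond:unifdrift} is precisely tailored to this purpose: the drift constants $\epsilon_V,b_V$, the small set $C$, and the minorization constants $\epsilon_v$ on each sublevel set $\{V\le v\}$ are all independent of $N$, while the minorizing measures $\{\nu^N\}$ enter the standard proofs only through the constants $\epsilon_v$ (the explicit identity of $\nu^N$ being irrelevant to the quantitative bounds). A secondary subtlety is to confirm that $\{\rho_k\}$ is indeed summable for some admissible choice of $\kappa,\lambda\in[0,1)$ inside the hypothesised moment class; this follows from the structure of the polynomial rates produced by the cited references, where the exponent in $\rho_k$ can be pushed past $1$ at the price of assuming higher-order moments of the form $V^{1-\lambda\alpha}$, exactly as imposed by the hypothesis.
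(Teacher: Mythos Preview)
Your overall strategy coincides with the paper's: both use a quantitative sub-geometric ergodicity bound, uniform in $N$ through Condition~\ref{cond:unifdrift}, to control the tail of the autocovariance series. The difference is in the \emph{form} of the bound invoked, and this is where your argument has a gap.

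You rely on an individual-term estimate
\[
    \bigl|\tilde{P}_N^k h(x,w)-\tilde{\pi}_N(h)\bigr|
    \le \bar{C}\,\|h\|_{V^{\alpha_{\kappa,\lambda}}}\,V^{1-\lambda\alpha}(x,w)\,\rho_k
\]
with $\sum_k \rho_k<\infty$. But for the polynomial drift of Condition~\ref{cond:unifdrift}, the rate coming out of the cited references is $\rho_k\asymp (k+1)^{-\alpha(1-\lambda)(1-\kappa)/(1-\alpha)}$, and this exponent need not exceed $1$ for the given $\alpha\in(0,1)$ and $\lambda,\kappa\in[0,1)$ (take e.g.\ $\alpha\le 1/2$, $\lambda=\kappa=0$, where the hypotheses reduce to $g$ bounded and $\sup_N\tilde{\pi}_N((|g|+1)V)<\infty$). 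Your final paragraph suggests adjusting $\lambda,\kappa$ to push the exponent past $1$, but these parameters are \emph{fixed} by the hypotheses: shrinking $\kappa$ breaks $\|g\|_{V^{\alpha_{\kappa,\lambda}}}<\infty$, and shrinking $\lambda$ breaks the moment condition on $V^{1-\lambda\alpha}$.

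The paper avoids this by using the \emph{weighted-sum} form of the bound \cite[Corollary~8]{andrieu-fort-vihola-subgeom}:
\[
    \sum_{k\ge 0} r(k)\,\bigl|\tilde{P}_N^k g(x,w)-\tilde{P}_N^k g(x',w')\bigr|
    \le R\,\|g\|_{V^{\alpha_{\kappa,\lambda}}}\bigl(V^{1-\lambda\alpha}(x,w)+V^{1-\lambda\alpha}(x',w')-1\bigr),
\]
with $r(k)=(k+1)^{\alpha(1-\lambda)(1-\kappa)/(1-\alpha)}$. Since $r$ is non-decreasing, this immediately gives
$\sum_{k\ge n}|\cdots|\le r(n)^{-1}\sum_{k\ge 0} r(k)|\cdots|$, so the tail is $O(1/r(n))$ and one only needs $r(n)\to\infty$, not summability of $1/r(k)$. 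Writing $|\E_{(x,w)}[\bar g(\tilde X_k^N)]|\le \int\tilde\pi_N(\ud y,\ud u)\,|\tilde P_N^k g(x,w)-\tilde P_N^k g(y,u)|$ and integrating against $|\bar g(x)|\,\tilde\pi_N(\ud x,\ud w)$ then yields the uniform bound $C/r(n)$, with $C$ controlled by $\sup_N\tilde\pi_N((|g|+1)V^{1-\lambda\alpha})$. Replacing your individual-term bound by this weighted-sum bound closes the gap without further changes to your argument.
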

%

\begin{pf} 
From the assumptions, there exists a finite constant $R$
such that for all $N\ge N_0$ and any $(x,w),(x',w')\in
\mathsf{X}\times\mathsf{W}$,
\begin{eqnarray*}
&&\sum_{k\ge0} r(k) \bigl| \tilde{P}_N^k
g(x,w) - \tilde{P}_N^k g\bigl(x',w'
\bigr) \bigr|
\\
&&\qquad\le R \|g\|_{V^{\alpha_{\kappa,\lambda}}}
\bigl(V^{1-\lambda\alpha}(x,w) +V^{1-\lambda\alpha}
\bigl(x',w'\bigr)-1\bigr),
\end{eqnarray*}
where $r(k)\defeq
(k+1)^{{\alpha(1-\lambda)(1-\kappa)}/{(1-\alpha)}}\to\infty$ as
$k\to\infty$ \cite{andrieu-fort-vihola-subgeom}, Corollary~8; see also~\cite{andrieu-fort-explicit}, Proposition~3.4.
Note that we may write
\begin{eqnarray*}
\bigl|\E_{(x,w)}\bigl[\bar{g}\bigl(\tilde{X}_k^N\bigr)\bigr]\bigr| &=&
\biggl|\tilde{P}_N^k g(x,w) - \int\tilde{\pi}_N(\ud
y,\ud u) \tilde{P}_N^k g(y,u) \biggr|
\\
&\le&\int\tilde{\pi}_N(\ud y,\ud u) \bigl| \tilde{P}_N^k
g(x,w) - \tilde{P}_N^k g(y,u) \bigr|.
\end{eqnarray*}
Therefore, we have for $n\ge0$,
\begin{eqnarray*}
\Biggl|\sum_{k=n}^\infty\E\bigl[ \bar{g}\bigl(
\tilde{X}_0^N\bigr)\bar{g}\bigl(\tilde{X}_k^N
\bigr)\bigr] \Biggr| &\le&\E \Biggl[ \bigl|\bar{g}\bigl(\tilde{X}_0^N
\bigr)\bigr| \sum_{k=n}^\infty \bigl|\E_{(\tilde{X}_0^N,\tilde{W}_0^N)}
\bigl[\bar {g}\bigl(\tilde {X}_k^N\bigr)\bigr] \bigr| \Biggr]
\\
&\le&\frac{\|g\|_{V^{\alpha_{\kappa,\lambda}}}}{r(n)} \bigl[ \tilde{\pi}_N \bigl(|g|
V^{1-\lambda\alpha} \bigr) + \pi\bigl(|g|\bigr) \tilde{\pi}_N\bigl(V^{1-\lambda
\alpha}
\bigr) \bigr].
\end{eqnarray*}
\upqed\end{pf}
%


\section{Sub-geometric ergodicity with an IMH as marginal algorithm}
\label{151515151515} 

The independent Metropolis--Hastings (IMH) algorithm is a specific case
of the Metropolis--Hastings in \eqref{eq:marginal-kernel} corresponding
to a
proposal $q(x,\ud y) = q(\ud y)$ for all $x\in\mathsf{X}$,
such that $\pi\ll q$. It is straightforward to check that
a pseudo-marginal implementation of this algorithm is also an IMH.
This fact allows for the easy exploration of conditions which ensure
uniform and sub-geometric ergodicity of
the pseudo-marginal IMH, and are illustrative of the general ideas we
develop later in the paper. We note that these results may be
relevant, for example, to the analysis of the Particle IMH-EM algorithm
presented in \cite{andrieu-vihola}.

%
\begin{remark}
\label{rem:pm-imh-uniformly-ergodic} 
It is now well known that the IMH is uniformly (and geometrically)
ergodic if and only if
$\pi(\ud x)/q(\ud x)$ is bounded \cite{mengersen-tweedie}.
In the case of the pseudo-marginal IMH,
this is equivalent to assuming that
the ratio $\tilde{\pi}(\ud x,\ud w)/\tilde{q}(\ud x,\ud w)
= w \pi(\ud x)/q(\ud x)$ is bounded; in other words,
assuming that there exists a constant $c\in(0,\infty)$ such that
$Q_x ([0,\bar{w}(x)] )=1$ for
$\pi$-almost every $x\in\mathsf{X}$, where
$\bar{w}(x) \defeq c q(\ud x)/\pi(\ud x)$.
\end{remark}
%

We then give two conditions which ensure sub-geometric ergodicity
of the pseudo-marginal IMH. Our results rely on
Lemma~\ref{lem:imh-subgeom-generic} in Appendix \ref{141414141414-lemmas},
which is inspired by
\cite{jarner-roberts}, which established polynomial ergodicity
and \cite{douc-moulines-soulier-computable}, which explored more
general sub-geometric
rates for the IMH.


%
\begin{corollary}
\label{cor:imh-subgeom-rates} 
Suppose either of the following holds:
\begin{longlist}[(a)]
\item[(a)]
for some $\gamma>0$,
$ \int\tilde{\pi}(\ud x, \ud w)
\exp [ (w\pi(\ud x)/q(\ud x) )^\gamma ] <\infty$,
\item[(b)]
for some $\beta\ge1$,
$ \int\tilde{\pi}(\ud x, \ud w)  (w \pi(\ud x)/q(\ud x)
)^\beta
<\infty$.
\end{longlist}
Then, there exist constants $M,c,c_V\in(0,\infty)$ such that
for $w \pi(\ud x)/q(\ud x)\ge M$, the following drift inequalities hold:
\begin{eqnarray*}
\tilde{P} V_{(a)}(x,w) &\le& V_{(a)}(x,w) - c \kappa
\bigl(V_{(a)}(x,w) \bigr),
\\
\tilde{P} V_{(b)}(x,w) &\le& V_{(b)}(x,w) - c
V_{(b)}^{1-1/\beta}(x,w),
\end{eqnarray*}
respectively,
where $V_{(a)}(x,w) =
\exp ((w\pi(\ud x)/q(\ud x))^{\gamma} )$,
$\kappa(t) = t(\log t)^{-1/\gamma}$ and
$V_{(b)}(x,w) = (w\pi(\ud x)/q(\ud x))^\beta+ 1$.
\end{corollary}
%

\begin{pf} 
Lemma~\ref{lem:imh-subgeom-generic} applied with
(a) $\phi(t) =
\exp(t^\gamma)$ and (b) $\phi(t)=t^\beta+1$.
\end{pf}

The type of drift in Corollary~\ref{cor:imh-subgeom-rates}(a) implies
faster than polynomial sub-geometric rates of convergence
(cf. \cite{douc-fort-moulines-soulier}),
whereas Corollary~\ref{cor:imh-subgeom-rates}(b)
implies polynomial rates of convergence (cf. \cite{jarner-roberts}).
We notice that the result suggests that the pseudo-marginal algorithm
may have a similar rate of convergence as that of the marginal
algorithm.

\section{Sub-geometric ergodicity with uniformly ergodic marginal
algorithm}
\label{sec:uniform-marginal} 

We consider next the situation where the marginal algorithm is
uniformly ergodic. This often corresponds to scenarios where the state space
$\mathsf{X}\subset\R^d$ is compact. It turns out that when the weight
distributions
$\{Q_x\}_{x\in\mathsf{X}}$ do not have bounded supports but are
uniformly integrable,
then the corresponding pseudo-marginal algorithm satisfies a sub-geometric
drift condition toward a set
$\mathsf{C}\defeq\mathsf{X}\times(0,\bar{w}]$ for some
$\bar{w}\in(1,\infty)$. Provided the marginal algorithm satisfies a
practically mild additional condition in
\eqref{eq:continuous-part-condition}, the set $\mathsf{C}$ is
guaranteed to be small for the pseudo-marginal chain.

We start by assuming uniform integrability in a form given by the de la
Vall\'{e}e--Poussin theorem (e.g., \cite{meyer}, page 19 T22). This allows
us to quantify the strength of the sub-geometric drift in a
convenient way, for example, indicating that moment conditions imply
polynomial drifts and consequently polynomial ergodicity.

%
\begin{condition}
\label{cond:ui} 
There exists a nondecreasing convex function
$\phi\dvtx[0,\infty)\to[1,\infty)$ satisfying
\[
\liminf_{t\to\infty} \frac{\phi(t)}{t} = \infty\quad\mbox{and}\quad
M_W \defeq\sup_{x\in\mathsf{X}} \int \phi(w)
Q_{x}(\ud w) < \infty. 
\]
\end{condition}
%

We record a simple implication of Condition \ref{cond:ui}.

\begin{lemma}
\label{lem:quantified-ui} 
Assume Condition \ref{cond:ui} holds. Then, there exists a function
$a(w)\dvtx[0,\infty)\to[0,\infty)$ depending only on $M_W$ and
$\phi$ such that
\[
\sup_{y\in\mathsf{X}} \int_{u\ge w} u
Q_y(\ud u) \le a(w) \quad\mbox{and}\quad \lim_{w\to\infty} a(w)=0.
\]
\end{lemma}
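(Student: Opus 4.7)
The plan is to exploit the standard consequence of the de~la~Vall\'ee-Poussin criterion: since $\phi(t)/t\to\infty$, one can essentially trade an integral of $u$ over a tail against an integral of $\phi(u)$ at the cost of a factor that vanishes as the tail moves out.

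First I would define $\psi(w)\defeq \inf_{t\ge w} \phi(t)/t$ for $w\ge 0$. Then $\psi$ is non-decreasing, depends only on $\phi$, and the assumption $\liminf_{t\to\infty}\phi(t)/t=\infty$ gives $\psi(w)\to\infty$ as $w\to\infty$.

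Next, for any $u\ge w$ one has $\phi(u)/u\ge \psi(w)$, hence
\[
    u\charfun{u\ge w} \le \psi(w)^{-1}\phi(u)\charfun{u\ge w}.
\]
Integrating against $Q_y(\ud u)$ and using Condition \ref{cond:ui} yields
\[
    \int_{u\ge w} u\, Q_y(\ud u)
    \le \psi(w)^{-1}\int \phi(u) Q_y(\ud u) \le \frac{M_W}{\psi(w)},
\]
uniformly in $y\in\mathsf{X}$. Setting $a(w)\defeq M_W/\psi(w)$ (with the convention $a(w)=+\infty$ when $\psi(w)=0$, which can occur only for small $w$ and can be replaced by any finite dominating value since the left-hand side is always bounded by $M_W$ for $w\ge w_0$ large enough) gives a function depending only on $M_W$ and $\phi$, with $a(w)\to 0$ as $w\to\infty$.

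The proof is essentially routine; there is no genuine obstacle, the only point requiring minor care is the definition of $\psi$ and the verification that $\psi(w)\to\infty$, which is immediate from the $\liminf$ assumption via a standard $\epsilon$--$T$ argument.
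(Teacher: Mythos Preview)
Your proof is correct and follows essentially the same approach as the paper: bound $u\charfun{u\ge w}$ by $\phi(u)$ times a factor that vanishes as $w\to\infty$, then invoke the uniform moment bound $M_W$. The only cosmetic difference is that the paper takes $a(w)=M_W\,w/\phi(w)$ directly, using convexity of $\phi$ to argue that $\phi(w)/w$ is eventually non-decreasing, whereas you sidestep that monotonicity issue by working with the running infimum $\psi(w)=\inf_{t\ge w}\phi(t)/t$.
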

%

\begin{pf} 
For any function $f\dvtx[0,\infty)\to[0,\infty)$ nondecreasing in
$[w,\infty)$, we have
\[
\int_{u\ge w} u Q_y (\ud u) \le\int u
\frac{f(u)}{f(w)} Q_y (\ud u).
\]
The function $f(w)\defeq\phi(w)/w$ is nondecreasing for $w$
sufficiently large,
therefore
\[
\sup_{y\in\mathsf{X}} \int_{u\ge w} u Q_y
(\ud u) \le M_W \frac{w}{\phi(w)} \eqdef a(w)
\mathop{\longrightarrow}\limits^{w\to\infty} 0. 
\]
\upqed\end{pf}
%

The next result establishes a drift away from large values of $w$ for
the pseudo-marginal chain, given that the marginal algorithm has an
acceptance probability uniformly bounded away from zero. All uniformly
(and geometrically) ergodic Markov chains satisfy this
property \cite{roberts-tweedie}, Proposition~5.1.

%
\begin{proposition}
\label{prop:w-drift} 
Suppose that the one-step expected acceptance probability of the
marginal algorithm is bounded away from zero,
\[
\alpha_0 \defeq\inf_{x\in\mathsf{X}} \int q(x,\ud y) \min
\bigl\{1,r(x,y)\bigr\} > 0,
\]
and Condition \ref{cond:ui} holds.

Then, there exist constants $\delta>0$ and $\bar{w}\in(1,\infty)$
such that
\[
\tilde{P} V(x,w) \le V(w) - \delta\frac{V(w)}{w} \mathbbm{I} \bigl\{w\in[
\bar{w},\infty) \bigr\} + M_W \mathbbm{I} \bigl\{w\in(0,\bar{w})
\bigr\}, 
\]
where $V(x,w) \defeq V(w) \defeq\phi(w)$.
The constants $\delta$ and $\bar{w}$ can be chosen to depend on
$\alpha_0$, $\phi$ and $M_W$ only.
\end{proposition}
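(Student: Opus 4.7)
My plan is to rewrite the drift as
\[
    \tilde{P}V(x,w) - V(w) = \iint \min\Bigl\{1,\, r(x,y)\tfrac{u}{w}\Bigr\}\bigl[\phi(u) - \phi(w)\bigr]\, q(x,\ud y)\, Q_y(\ud u),
\]
and split the inner integral over $u$ into three ranges $[0, w^*]$, $(w^*, w]$, $(w,\infty)$, where $w^* > 0$ is chosen---depending only on $\phi$ and $M_W$, via Lemma \ref{lem:quantified-ui}---so that $\inf_{y\in\mathsf{X}}\int_{u\le w^*}u\,Q_y(\ud u)\ge 1/2$.

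On $[0, w^*]$ I will extract the downward drift. Using $\min\{1, ab\} \ge \min\{1,a\}\min\{1,b\}$ for $a,b\ge 0$ and $u \le w^* < w$,
\[
    \min\Bigl\{1, r(x,y)\tfrac{u}{w}\Bigr\} \ge \min\{1,r(x,y)\}\,\tfrac{u}{w}.
\]
Combining with the hypothesis $\int q(x,\ud y)\min\{1, r(x,y)\}\ge \alpha_0$ and the choice of $w^*$ then gives a lower bound $\alpha_0/(2w)$ on the acceptance mass into $[0, w^*]$. Since $\phi(u) - \phi(w) \le \phi(w^*) - \phi(w) \le -\phi(w)/2$ once $\phi(w)\ge 2\phi(w^*)$, this region contributes at most $-\alpha_0 V(w)/(4w)$. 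The middle range $(w^*, w]$ contributes non-positively because $\phi$ is non-decreasing.

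For the upward range $(w, \infty)$ I will use the crude estimates $\min\{1,\cdot\}\le 1$ and $\phi(u)-\phi(w)\le\phi(u)$, giving a contribution at most $\int q(x,\ud y)\int\phi(u)\,Q_y(\ud u)\le M_W$. Assembling the three pieces yields
\[
    \tilde{P}V(x,w)-V(w) \le -\frac{\alpha_0 V(w)}{4w} + M_W.
\]
Because $\phi(w)/w\to\infty$ by Condition \ref{cond:ui}, I can choose $\bar{w}\in(1,\infty)$ (depending only on $\alpha_0$, $\phi$, $M_W$) so that both $\phi(w)\ge 2\phi(w^*)$ and $V(w)/w\ge 8 M_W/\alpha_0$ hold on $[\bar{w},\infty)$; this yields the claim with $\delta = \alpha_0/8$ on $w\ge \bar{w}$. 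The complementary case $w<\bar{w}$ follows from the uniform estimate $\tilde{P}V - V \le \iint\phi(u)\,q(x,\ud y)\,Q_y(\ud u) \le M_W$.

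The delicate step is controlling the upward contribution: Condition \ref{cond:ui} provides no uniform-in-$y$ tail bound on $\int_{u>w}\phi(u)Q_y(\ud u)$ beyond the global constant $M_W$, so no sharper pointwise estimate is available. This is precisely why the drift rate here must be of order $V(w)/w$; the super-linear growth $\phi(w)/w\to\infty$ guaranteed by Condition \ref{cond:ui} is exactly what renders the crude bound $M_W$ negligible against the downward drift for $w$ sufficiently large.
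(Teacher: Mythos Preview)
Your proof is correct and follows essentially the same approach as the paper's: both write $\tilde{P}V - V$ as an integral of $\min\{1, r(x,y)u/w\}[\phi(u)-\phi(w)]$, bound the upward part crudely by $M_W$, and extract the downward drift from small $u$ via $\min\{1,ab\}\ge\min\{1,a\}\min\{1,b\}$ together with Lemma~\ref{lem:quantified-ui}. The only difference is cosmetic: you split at a fixed threshold $w^*$ (so that $\inf_y\int_{u\le w^*} u\,Q_y(\ud u)\ge 1/2$ and then use $\phi(w)\ge 2\phi(w^*)$), whereas the paper splits at the moving threshold $w/2$ and invokes convexity of $\phi$ to get $\limsup_{w\to\infty}\phi(w/2)/\phi(w)\le 1/2$; both routes lead to the same estimate $\tilde{P}V - V \le M_W - c\,\phi(w)/w$ for large $w$.
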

%

\begin{pf} 
We can estimate
\begin{eqnarray*}
&&\hspace*{-4pt}\tilde{P} V(x,w) - V(w)
\\
&&\qquad\hspace*{-4pt}= \iint q(x,\ud y) Q_y (\ud u) \min \biggl\{1,r(x,y)
\frac{u}{w} \biggr\} \bigl[ \phi(u) - \phi(w) \bigr]
\\
&&\qquad\hspace*{-4pt}\le M_W - \iint q(x,\ud y) Q_y (\ud u) \min \biggl
\{1,r(x,y)\frac
{u}{w} \biggr\} \mathbbm{I} \{u<w \} \bigl[ \phi(w) -
\phi(u) \bigr]
\\
&&\qquad\hspace*{-4pt}\le M_W - \phi(w)\int q(x,\ud y) \min\bigl\{1,r(x,y)\bigr\} \int
_{u<w/2} Q_y (\ud u) \frac{u}{w} \biggl[ 1-
\frac{\phi(w/2)}{\phi(w)} \biggr],
\end{eqnarray*}
because $\min\{1,ab\}\ge\min\{1,a\}\min\{1,b\}$ for all $a,b\ge0$.
The convexity of $\phi$ implies $2\phi(w/2) \le1 + \phi(w)$, and
therefore $\limsup_{w\to\infty} \phi(w/2)/\phi(w) \le1/2$.
Because $\int_{u<w/2}
Q_y (\ud u) u= 1 - \int_{u\ge w/2} Q_y(\ud u) u$, we may apply
Lemma~\ref{lem:quantified-ui}.
Now, for any $\delta_0\in(0,\alpha_0/2)$, there exists
$\bar{w}_0\in(1,\infty)$ such that
\[
\tilde{P} V(x,w) - V(w) \le M_W - \delta_0
\frac{\phi(w)}{w} \qquad\mbox{for all $w\in[ \bar{w}_0,\infty)$.}
\]
The claim follows by taking $\bar{w}\in[\bar{w}_0,\infty)$
sufficiently large
such that
$\phi(w)/w > M_W/\delta_0$ for all $w\in[ \bar{w},\infty)$.
\end{pf}
%

In practice, Condition \ref{cond:ui} is often verified for
moments, that is, $\phi(w) = w^\beta$.
We record the following corollary to highlight the straightforward
connection of $\beta$ to the polynomial drift rate.

%
\begin{corollary}
\label{cor:poly-drift} 
Suppose the conditions of Proposition~\ref{prop:w-drift} hold with
$\phi(w) = w^\beta+ 1$ for some $\beta>1$. Then, the pseudo-marginal
kernel satisfies the drift condition
\begin{eqnarray*}
\tilde{P} V(x,w) &\le V(w) - \delta V^{{(\beta-1)}/{\beta}}(w) + b_V
\mathbbm{I} \bigl\{w\in(0,\bar{w}) \bigr\},
\end{eqnarray*}
where $V(w) \defeq w^\beta+ 1$ and $b_V \defeq M_W + \delta
V^{{(\beta-1)}/{\beta}}(\bar{w})$.
\end{corollary}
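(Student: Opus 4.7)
The plan is to apply Proposition \ref{prop:w-drift} directly with the choice $\phi(w)=w^\beta+1$, and then convert the $V(w)/w$ drift term appearing there into the polynomial form $V(w)^{(\beta-1)/\beta}$. First I would check that $\phi$ is admissible, i.e., that it is non-decreasing and convex (immediate since $\beta>1$), and that $\phi(w)/w=w^{\beta-1}+w^{-1}\to\infty$, so that Condition \ref{cond:ui} is still quantified by the stated $M_W$ and Proposition \ref{prop:w-drift} delivers constants $\delta>0$ and $\bar{w}\in(1,\infty)$ with
\[
\tilde{P}V(x,w)\le V(w)-\delta\frac{V(w)}{w}\charfun{w\in[\bar{w},\infty)}+M_W\charfun{w\in(0,\bar{w})}.
\]

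The key elementary inequality is $V(w)/w\ge V(w)^{(\beta-1)/\beta}$ for all $w>0$. This rearranges to $V(w)^{1/\beta}\ge w$, i.e.\ $(w^\beta+1)^{1/\beta}\ge w$, which is obvious. Consequently, on $\{w\ge\bar{w}\}$ Proposition \ref{prop:w-drift} yields
\[
\tilde{P}V(x,w)\le V(w)-\delta V(w)^{(\beta-1)/\beta},
\]
which is exactly what the corollary claims on that region (the indicator $\charfun{w\in(0,\bar{w})}$ vanishing there).

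On the complementary region $\{w\in(0,\bar{w})\}$ I would start from $\tilde{P}V(x,w)\le V(w)+M_W$ (the inequality from Proposition \ref{prop:w-drift}) and then simply add and subtract $\delta V(w)^{(\beta-1)/\beta}$. Since $V$ is non-decreasing and $\beta>1$, one has $V(w)^{(\beta-1)/\beta}\le V(\bar{w})^{(\beta-1)/\beta}$ throughout $(0,\bar{w})$, hence
\[
\tilde{P}V(x,w)\le V(w)-\delta V(w)^{(\beta-1)/\beta}+\bigl(M_W+\delta V(\bar{w})^{(\beta-1)/\beta}\bigr),
\]
which matches the stated definition of $b_V$. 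Combining the two cases gives the advertised drift.

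There is no real obstacle here: the only thing to watch is the direction of the comparison $V(w)/w$ versus $V(w)^{(\beta-1)/\beta}$ and the monotonicity of $w\mapsto V(w)^{(\beta-1)/\beta}$ used to handle the small-$w$ regime. Both are one-line checks, so the corollary is essentially a rewriting of Proposition \ref{prop:w-drift} specialised to polynomial moments.
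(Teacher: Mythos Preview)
Your proposal is correct and follows exactly the paper's approach: the paper's proof is the single observation that $w\le(1+w^{\beta})^{1/\beta}=V(w)^{1/\beta}$, which is precisely your key inequality $V(w)/w\ge V(w)^{(\beta-1)/\beta}$. You have simply spelled out the details the paper leaves implicit, including the add-and-subtract step on $(0,\bar{w})$ that yields the stated $b_V$.
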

%

\begin{pf} 
Follows from Proposition~\ref{prop:w-drift}
observing that $w\le(1+w^{\beta})^{1/\beta} = V(w)^{1/\beta}$.
\end{pf}
%

Proposition~\ref{prop:w-drift} and Corollary~\ref{cor:poly-drift}
establish a drift toward the set $\mathsf{X}\times(0,\bar{w}]$.
They imply sub-geometric convergence of the Markov chain,
with the following lemma showing that the set $\mathsf{X}\times
(0,\bar
{w}]$ is small.

%
\begin{lemma}
\label{lem:uniform-marginal-small-set} 
Denote the (sub-probability) kernel $P_{\mathrm{acc}}(x,A)\defeq
\int_A q(x,\ud y)\times \min\{1,r(x,y)\}$.
Suppose there exists $\varepsilon>0$, an integer $n\in[1,\infty)$
and a
probability measure
$\nu$ on $ (\mathsf{X},\B(\mathsf{X}) )$ such that
for any $A\in\B(\mathsf{X})$,
%
\begin{equation}
P_{\mathrm{acc}}^n(x,A) \ge\varepsilon\nu(A)\qquad \mbox{for all $x\in
\mathsf{X}$.} \label{eq:continuous-part-condition}
\end{equation}
Then, there exists $\bar{w}_0\in(1,\infty)$,
$\tilde{\varepsilon}>0$ and a probability measure $\tilde{\nu}$ on
$ (\mathsf{X}\times\mathsf{W},\B(\mathsf{X})\times\B(\mathsf
{W}) )$
such that for all $\bar{w}\in[\bar{w}_0,\infty)$,
\[
\tilde{P}^n(x,w;\uarg) \ge \frac{\tilde{\varepsilon}}{\bar{w}}
\tilde{\nu}(\uarg)\qquad
\mbox{for all $(x,w)\in\mathsf{X}\times(0,\bar{w}]$.}
\]
\end{lemma}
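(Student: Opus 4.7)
The plan is to lower-bound $\tilde{P}^n$ by restricting to trajectories whose intermediate weights lie in a fixed bounded interval $[a,b]\subset(0,\infty)$ independent of $\bar{w}$. This decouples the state and weight components so that the hypothesis on $P_{\text{acc}}^n$ can be applied directly to the state part, and the $1/\bar{w}$ factor arises only from the first transition, where the starting weight $w\le\bar{w}$ may be as large as $\bar{w}$.

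I would first pick $[a,b]$, say $a=1/2$ and $b$ sufficiently large, such that $\gamma\defeq\inf_{y\in\mathsf{X}} Q_y([a,b])>0$. Such a choice is possible because $\int u\, Q_y(\ud u)=1$ uniformly in $y$ and, by uniform integrability of the weights (Condition \ref{cond:ui}, via Lemma \ref{lem:quantified-ui}), $\sup_y\int_b^\infty u\,Q_y(\ud u)\to 0$ as $b\to\infty$; combining these with $\int_0^a u\,Q_y(\ud u)\le a$ yields $Q_y([a,b])\ge(1-a-a(b))/b$. Then take $\bar{w}_0\ge b$.

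Next, expand $\tilde{P}^n(x,w;\cdot)$ as the $n$-fold integral over the trajectory $(x_k,u_k)_{k=1}^n$ with $(x_0,u_0)=(x,w)$, bound each transition below by its non-rejection part, and apply $\min\{1,r(x_{k-1},x_k)u_k/u_{k-1}\}\ge \min\{1,r(x_{k-1},x_k)\}\min\{1,u_k/u_{k-1}\}$. Restricting to trajectories with $u_1,\ldots,u_n\in[a,b]$: at $k=1$, $u_0=w\le\bar{w}$ gives $\min\{1,u_1/u_0\}\ge a/\bar{w}$, which is the sole contribution of the $1/\bar{w}$ factor; for $k\ge 2$, $u_{k-1},u_k\in[a,b]$ gives $\min\{1,u_k/u_{k-1}\}\ge a/b$, a $\bar{w}$-independent constant. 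Integrating out the inner weights $u_1,\ldots,u_{n-1}$ against $Q_{x_k}(\ud u_k)$ contributes factors $Q_{x_k}([a,b])\ge\gamma$, while the remaining state integrand is $\prod_{k=1}^n P_{\text{acc}}(x_{k-1},\ud x_k)$, bounded below by $\epsilon\,\nu(\ud x_n)$ by hypothesis. Defining $\tilde{\nu}(\ud y,\ud u)\propto \nu(\ud y)Q_y(\ud u)\charfun{u\in[a,b]}$ (a probability measure since its total mass is at least $\gamma>0$) and collecting the $\bar{w}$-independent constants into $\tilde{\epsilon}\defeq a\epsilon(a\gamma/b)^{n-1}\int\nu(\ud y)Q_y([a,b])$ yields the claimed bound $\tilde{P}^n(x,w;\cdot)\ge (\tilde{\epsilon}/\bar{w})\tilde{\nu}$.

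The main obstacle is the choice of $[a,b]$: without uniform integrability of $\{Q_y\}_{y\in\mathsf{X}}$, one cannot guarantee $\gamma>0$, and the argument breaks because intermediate steps can then only provide $\bar{w}$-dependent factors and one ends up with $1/\bar{w}^n$ rather than $1/\bar{w}$. Once such an interval is secured via Condition \ref{cond:ui}, the remainder is bookkeeping for the multiplicative lower bounds along each step of the trajectory.
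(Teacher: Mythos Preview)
Your proof is correct and follows the same overall strategy as the paper: lower-bound the accepted part of each transition via $\min\{1,ab\}\ge\min\{1,a\}\min\{1,b\}$, restrict the intermediate weights to a set carrying uniformly positive $Q_y$-mass (secured through Lemma~\ref{lem:quantified-ui}, hence Condition~\ref{cond:ui}, which the paper's proof also invokes implicitly), and then apply the hypothesis on $P_{\text{acc}}^n$ to the decoupled state component.

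The one genuine difference is the choice of that restricting set. The paper confines intermediate weights to $(0,\bar{w}]$ and uses $\min\{1,u/w\}\ge \bar{w}^{-1}\min\{\bar{w}_0,u\}$ at every step, so its recursion produces a minorization constant of order $\epsilon_W^{n-1}\epsilon/\bar{w}^n$ rather than the stated $\tilde{\epsilon}/\bar{w}$. Your choice of a fixed interval $[a,b]$, independent of $\bar{w}$, is sharper: the factor $a/\bar{w}$ appears only at the first step (where $u_0=w\le\bar{w}$), while each subsequent step contributes the $\bar{w}$-free constant $a/b$. This recovers exactly the $\tilde{\epsilon}/\bar{w}$ dependence claimed in the lemma. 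For the actual application---showing that $\mathsf{X}\times(0,\bar{w}]$ is a small set for a fixed $\bar{w}$---either dependence is adequate, but your argument matches the statement as written.
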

%

\begin{pf} 
Choose $\bar{w}_0>1$ sufficiently large so that $\varepsilon_W\defeq
\inf_{y\in\mathsf{X}}
\int Q_y(\ud u) \times \min\{\bar{w}_0,u\}>0$; such $\bar{w}_0$ exists due to
Lemma~\ref{lem:quantified-ui} because
\[
\int Q_y(\ud u) \min\{\bar{w}_0,u\} \ge\int
_{u<\bar{w}_0} Q_y(\ud u) u = 1 - \int
_{u\ge\bar{w}_0} Q_y(\ud u) u.
\]
We may write for $A\times B \in\B(\mathsf{X})\times\B(\mathsf{W})$
and for $w\in(0,\bar{w}]$,
\begin{eqnarray*}
\tilde{P}(x,w; A,B) &\ge&\int_A q(x,\ud y) \int
_B Q_y(\ud u) \min \biggl\{1,r(x,y)
\frac{u}{w} \biggr\}
\\
&\ge&\int_A q(x,\ud y) \min\bigl\{1,r(x,y)\bigr\} \int
_B Q_y(\ud u) \min \biggl\{1,
\frac{u}{\bar{w}} \biggr\}
\\
&\ge&\frac{1}{\bar{w}}\int P_{\mathrm{acc}}(x,\ud y) \hat{P}_W(y,
B),
\end{eqnarray*}
where $\hat{P}_W(y, B) = \int_B Q_y(\ud u) \min\{\bar{w}_0,u\}$.
We deduce recursively that
\begin{eqnarray*}
\tilde{P}^n(x,w; A,B) &\ge&\frac{1}{\bar{w}^n} \Bigl[\inf
_{y\in\mathsf{X}} \hat{P}_W\bigl(y,(0,\bar{w}]
\bigr)\Bigr]^{n-1} \int P_{\mathrm{acc}}^n(x,\ud y)
\hat{P}_W(y,B)
\\
&\ge&\frac{\varepsilon_W^{n-1} \varepsilon}{\bar{w}^n} \int_A \nu(\ud y)
\hat{P}_W(y, B) \eqdef\frac{\varepsilon_W^{n-1} \varepsilon}{\bar{w}^n} \tilde{\nu}_0(A
\times B).
\end{eqnarray*}
We may take $\tilde{\nu}(A\times B) = \tilde{\nu}_0(A\times B) /
\tilde{\nu}_0(\mathsf{X}\times\mathsf{W})$ and $\tilde
{\varepsilon} =
\varepsilon\tilde{\nu}_0(\mathsf{X}\times\mathsf{W})>0$.
\end{pf}
%

%
\begin{remark} 
The condition in \eqref{eq:continuous-part-condition} is more stringent
than assuming $P$ uniformly ergodic. However, it is the most common
way to establish the $n$-step minorisation condition $P^n(x,\uarg)\ge
\varepsilon\nu(\uarg)$ in practice, which holds if and only if $P$ is
uniformly ergodic. In the case of a continuous state-space $\mathsf{X}$
where $q(x,\{y\})=0$ and $\nu(\{y\})=0$ for all $x,y\in\mathsf{X}$ and
$n=1$, the condition in \eqref{eq:continuous-part-condition} is in fact
equivalent to $P(x,\uarg) \ge\varepsilon\nu(\uarg)$.
\end{remark}
%


\section{Polynomial ergodicity with a RWM as marginal algorithm}
\label{171717171717} 


We consider next conditions which allow us to establish a polynomial
drift condition
for the pseudo-marginal algorithm in the case where the
marginal algorithm is a geometrically ergodic random-walk Metropolis
(RWM) targeting a super-exponentially decaying target with regular
contours \cite{jarner-hansen}. The existence of such a drift, together with
additional simple assumptions, imply polynomial rates of ergodicity,
but also
Condition \ref{cond:implicit-autocorr} (essential for the convergence
of the pseudo-marginal
asymptotic variance to that of the marginal algorithm) and a central
limit theorem for example.

Our results rely on moment conditions on the distributions
$Q_x(\ud w)$. In Section~\ref{sec:uniform-moment-bounds} we assume
the moments to be (essentially) uniform in $x$, while in
Section~\ref{sec:nonuniform-moment-bounds} we consider the case where
the behaviour of $Q_x(\ud w)$ can get worse as $|x|\to\infty$.
Note that the conditions in Section~\ref{sec:nonuniform-moment-bounds}
may appear more general,
but that they do not include all the cases covered by those of
Section~\ref{sec:uniform-moment-bounds}. This can be seen, for
example, by comparing
Conditions \ref{a:super-exp-regular} and \ref{cond:strong-super-exp-regular}
and the admissible values of $\eta$ in Theorem~\ref{thm:rwm-bounded-moments} and Corollary~\ref{cor:strongly-super-exp}.

It is possible to extend our results beyond the polynomial case. For
example one may assume the existence of exponential moment conditions;
see Remark~\ref{rem:general-moments-rwm}. For the sake of clarity and
brevity, we have opted to detail here the polynomial case only.

%
\begin{remark}
\label{rem:rwm-geometrically-ergodic} 
While our main focus here is on unbounded weight distributions, we
will see that Lemma~\ref{lem:drift-w-bounded-nonuniform} suggests that
geometric ergodicity is still possible when
$Q_x ((0,\bar{w}(x)] )=1$ for all $x\in\R^d$, where
$\bar{w}\dvtx\R^d\to[1,\infty)$ tends to infinity as $|x|\to
\infty$. This
is, however, a consequence of the strong assumption properties on the
tails of $\pi$ which confer the algorithm with a robustness property
with respect to perturbations. Indeed, consider now the RWM on a
compact subset $\mathsf{X}\subset\R^d$ with $\pi$ bounded away from
zero and infinity on $\mathsf{X}$. It is not difficult to establish
that if there does not exist $\bar{w}<\infty$ such that
$Q_x ([0,\bar{w}] )=1$ for $\pi$-almost every $x\in\mathsf{X}$,
then the chain cannot be geometrically ergodic; see, for example, the
proof of Proposition~\ref{prop:geom-bound-necessity}.
\end{remark}
%

Throughout this section, we denote the regions of almost sure
acceptance and possible rejection for the marginal and
pseudo-marginal and algorithms as
\begin{eqnarray*}
A_x &\defeq& \biggl\{ z\in\mathsf{X}\given\frac{\pi(x+z)}{\pi(x)} \ge1
\biggr\},\qquad R_x \defeq A_x^\complement,
\\
A_{x,w} &\defeq& \biggl\{(z,u)\in\mathsf{X}\times\mathsf{W}\given
\frac{\pi(x+z)}{\pi(x)}\frac{u}{w}\ge1 \biggr\}, \qquad R_{x,w} \defeq
A_{x,w}^\complement,
\end{eqnarray*}
respectively, for all $x\in\mathsf{X}$ and $w\in\mathsf{W}$.


\subsection{Uniform moment bounds}
\label{sec:uniform-moment-bounds} 

Consider the following moment condition on the distributions
$\{Q_x\}_{x\in\mathsf{X}}$ where $\mathsf{X}=\R^d$.

%
\begin{condition}
\label{cond:w-moments} 
Suppose there exist constants $\alpha'>0$ and $\beta'>1$
such that
%
\begin{equation}
M_W \defeq\esssup_{x\in\mathsf{X}} \int \bigl(w^{-\alpha'}\vee
w^{\beta'}\bigr) Q_{x}(\ud w) < \infty, \label{eq:moment-cond}
\end{equation}
where $a\vee b\defeq\max\{a,b\}$ and
the essential supremum is taken with respect to the Lebesgue
measure.
\end{condition}
%

We first establish the following simple lemma, used throughout this
section, which guarantees that the moment
condition above holds also for any intermediate exponents.

%
\begin{lemma} 
Given \eqref{eq:moment-cond}, then for all $\alpha\in[0,\alpha']$ and
$\beta\in[0,\beta']$ and any $\gamma\in[-\alpha',\beta]$
\[
\esssup_{x\in\mathsf{X}} \int \bigl(w^{-\alpha}\vee w^{\beta}\bigr)
Q_{x}(\ud w) \le M_W \quad\mbox{and}\quad \esssup_{x\in\mathsf{X}}
\int w^{\gamma} Q_{x}(\ud w) \le M_W.
\]
\end{lemma}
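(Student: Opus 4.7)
The plan is to reduce both bounds to a pointwise comparison between the integrands, uniformly in $w>0$, after which integrating against $Q_x$ and taking the essential supremum in $x$ immediately gives the claims.

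For the first inequality, I would show the pointwise bound
\[
   w^{-\alpha}\vee w^{\beta}\le w^{-\alpha'}\vee w^{\beta'}\quad\text{for all }w>0,
\]
by splitting into the two cases $w\le 1$ and $w\ge 1$. When $w\le 1$, the nonnegativity of $\beta,\beta'$ forces $w^{\beta},w^{\beta'}\le 1\le w^{-\alpha},w^{-\alpha'}$, so both maxima collapse to the first term, and $\alpha\le\alpha'$ together with $w\le 1$ yields $w^{-\alpha}\le w^{-\alpha'}$. When $w\ge 1$, the same argument with the roles swapped uses $\alpha,\alpha'\ge 0$ and $\beta\le\beta'$ to give $w^{\beta}\le w^{\beta'}$. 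Then integrating against $Q_x$ and invoking \eqref{eq:moment-cond} finishes the first claim.

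For the second inequality, I would establish
\[
   w^{\gamma}\le w^{-\alpha'}\vee w^{\beta'}\quad\text{for all }w>0,
\]
again by the same dichotomy: if $w\le 1$ then $\gamma\ge-\alpha'$ combined with $\log w\le 0$ gives $w^{\gamma}\le w^{-\alpha'}$; if $w\ge 1$, then $\gamma\le\beta\le\beta'$ combined with $\log w\ge 0$ gives $w^{\gamma}\le w^{\beta'}$. Integrating and using \eqref{eq:moment-cond} concludes the proof.

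There is essentially no obstacle here: the only subtlety is being careful about the monotonicity of $w\mapsto w^{s}$ in the exponent $s$, which flips depending on whether $w<1$ or $w>1$, and this is exactly what motivates the $w^{-\alpha'}\vee w^{\beta'}$ envelope in \eqref{eq:moment-cond}. The argument uses nothing beyond elementary real analysis and monotonicity of the integral.
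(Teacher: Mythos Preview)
Your proposal is correct and follows essentially the same approach as the paper: both establish the pointwise inequality $w^{-\alpha}\vee w^{\beta}\le w^{-\alpha'}\vee w^{\beta'}$ for the first claim, and for the second claim you prove $w^{\gamma}\le w^{-\alpha'}\vee w^{\beta'}$ directly, whereas the paper instead reduces to the first inequality (taking, e.g., $\alpha=\alpha'$ and $\beta=\gamma$ when $\gamma\ge 0$). Your case split on $w\le 1$ versus $w\ge 1$ simply spells out what the paper leaves implicit.
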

%

\begin{pf} 
The first inequality follows by observing that $w^{-\alpha}\vee
w^{\beta} \le w^{-\alpha'}\vee w^{\beta'}$ for all $w> 0$.
For the second one, suppose first that $\gamma\in[0,\beta']$.
Then $w^{\gamma} \le w^{-\alpha'}\vee w^{\gamma}$, and the result
follows from the first inequality. The case $\gamma\in[-\alpha',0]$ is
similar.
\end{pf}
%

The following condition for the target density $\pi$
was introduced in \cite{jarner-hansen}.

%
\begin{condition}
\label{a:super-exp-regular} 
The target distribution $\pi$ has a density with respect to the
Lebesgue measure (also denoted $\pi$) which is continuously
differentiable and
supported on $\R^d$. The tails of $\pi$ are super-exponentially
decaying and
have regular contours, that is,
\[
\lim_{|x|\to\infty} \frac{x}{|x|} \cdot\nabla\log\pi(x) = -\infty
\quad\mbox{and}\quad \limsup_{|x|\to\infty} \frac{x}{|x|} \cdot
\frac{\nabla\pi(x)}{|\nabla\pi(x)|} < 0,
\]
respectively, where $|x|$ denotes the Euclidean norm of $x\in\R^d$.
Moreover, the proposal distribution satisfies
$q(x, A) = q(A-x) = \int_A q(y-x) \,\ud y$
with a symmetric density $q$
bounded away from zero in some neighbourhood of the origin.
\end{condition}
%

The following theorem establishes a polynomial drift given the
conditions above.

\begin{theorem}
\label{thm:rwm-bounded-moments} 
Suppose $\tilde{P}$ is a pseudo-marginal kernel with distributions
$Q_x(\ud w)$ satisfying Condition \ref{cond:w-moments} with some
constants $\alpha'>0$ and $\beta'>1$, and that the corresponding
marginal algorithm is a random walk Metropolis with invariant
density $\pi$ and proposal density $q$ satisfying Condition
\ref{a:super-exp-regular}.

Define $V\dvtx\mathsf{X}\times\mathsf{W}\to[1,\infty)$ as
follows:
%
\begin{equation}
V(x,w) \defeq c_{\pi}^{\eta} \pi^{-\eta}(x)
\bigl(w^{-\alpha}\vee w^{\beta}\bigr)\qquad \mbox{where } c_{\pi}
\defeq\sup_{z\in\mathsf{X}} \pi(z), \label{eq:def-drift-function}
\end{equation}
for some constants $\eta\in(0,\alpha'\wedge1)$, $\alpha\in(\eta,\alpha']$
and $\beta\in(0,\beta'-\eta)$.

Then, there exists constants $\bar{w}, M,b\in[1,\infty)$,
$\underline{w}\in(0,1]$ and $\delta_V>0$ such that
%
\begin{equation}
\tilde{P} V(x,w) \le \cases{V(x,w) - \delta_V
V^{{(\beta-1)}/{\beta}}(x,w), &\quad $\mbox{for all } (x,w)\notin\mathsf{C}$, \vspace*{2pt}
\cr
b, &\quad $\mbox{for all $(x,w)\in\mathsf{C}$,}$} \label{eq:def-drift-condition}
\end{equation}
where $\mathsf{C}\defeq\{
(x,w)\in\mathsf{X}\times\mathsf{W}\given|x|\le M,
w\in[\underline{w},\bar{w}] \}$.

Moreover, $b$, $\delta_V$ and $\mathsf{C}$ depend only on the marginal
algorithm, the constants $\alpha',\beta'$ and $M_W$ in Condition
\ref{cond:w-moments} and the chosen constants $\alpha,\beta,\eta$.
\end{theorem}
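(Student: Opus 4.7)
The approach is to exploit the multiplicative structure $V(x,w)=V_x(x)V_w(w)$ where $V_x(x)\defeq c_\pi^\eta\pi^{-\eta}(x)$ and $V_w(w)\defeq w^{-\alpha}\vee w^\beta$. Introducing $\rho\defeq\pi(x+z)/\pi(x)$ and $s\defeq u/w$, and using the symmetry of $q$, the drift rewrites as
\[
    \tilde{P}V(x,w)-V(x,w)=V(x,w)\int q(z)\,Q_{x+z}(\ud u)\,\min\{1,\rho s\}\bigl[\rho^{-\eta}V_w(u)/V_w(w)-1\bigr]\ud z.
\]
The target is to show this quantity is at most $-\delta_V V^{(\beta-1)/\beta}(x,w)$ whenever $(x,w)\notin\mathsf{C}$, and bounded by a constant $b$ on $\mathsf{C}$. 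I would establish the drift in two complementary regimes and patch them together.

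Regime A: $w\in[\underline{w},\bar{w}]$ and $|x|$ large. I integrate $u$ first. Using Lemma \ref{lem:mean-acc-rate} together with the moment bound $\int Q_y(\ud u)(u^{-\alpha}\vee u^\beta)\le M_W$ of Condition \ref{cond:w-moments}, the inner integral reduces, up to a uniformly controlled $\mathcal{O}(M_W/V_w(w))$ perturbation, to the classical Jarner-Hansen integrand $\min\{1,\rho\}\rho^{-\eta}+1-\min\{1,\rho\}$ associated to the marginal RWM with Lyapunov $\pi^{-\eta}$. Under Condition \ref{a:super-exp-regular}, the super-exponential decay and regular-contour property force the acceptance region $A_x$ to contain, for $|x|\ge M$, an asymptotic half-space of $\R^d$ on which $\rho^{-\eta}$ is arbitrarily small; Jarner-Hansen's argument then delivers a multiplicative drift $\tilde{P}V/V\le 1-\delta_0$ uniformly in $w\in[\underline{w},\bar{w}]$. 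Multiplying back by $V$ and using $V\ge V^{(\beta-1)/\beta}$ yields the required polynomial drift.

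Regime B: $w\notin[\underline{w},\bar{w}]$. For $w\ge\bar{w}$, I use $\min\{1,\rho s\}\le \rho u/w$ together with the bound on $\int Q_y(\ud u)u^{\beta+1}$ (valid since $\beta<\beta'-\eta$ allows $\beta+1\le\beta'$) to control the accepted contribution $\int Q_y(\ud u)\min\{1,\rho s\}\rho^{-\eta}V_w(u)\le M_W\rho^{1-\eta}/w^{\beta+1}$. Since $\int q(z)\rho^{1-\eta}\ud z$ is bounded under Condition \ref{a:super-exp-regular}, the ``weight transport'' term contributes only $\mathcal{O}(w^{-\beta-1})$. The $-\min\{1,\rho s\}$ piece, via Lemma \ref{lem:mean-acc-rate} and a uniform lower bound on the marginal expected acceptance (as in Proposition \ref{prop:w-drift}), produces the dominant term of order $-c/w$. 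Translated back, this gives $\tilde{P}V-V\le -cV_x(x)w^{\beta-1}\le -\delta V^{(\beta-1)/\beta}$ using $V_x\ge 1$. The symmetric case $w\le\underline{w}$ is handled by the $\alpha'$-moment bound applied to $u^{-\alpha}$.

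The main obstacle will be the transitional regions where $|x|$ and $w$ are simultaneously moderate, since the two mechanisms above couple through the joint factor $\min\{1,\rho s\}$. This is resolved by choosing $M$, $\underline{w}$, $\bar{w}$ sequentially so that in each subregion of $\mathsf{C}^\complement$ one of the two drifts strictly dominates with enough margin to absorb the other; on the compact set $\mathsf{C}$, $V$ is bounded and the bound $\tilde{P}V\le b$ is immediate. All constants introduced depend only on the marginal algorithm together with $\alpha',\beta',M_W,\alpha,\beta,\eta$, as required.
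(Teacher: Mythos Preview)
Your decomposition into regimes matches the paper's, but the arguments within each regime have genuine gaps. In Regime B for $w\ge\bar{w}$, the implication ``$\beta<\beta'-\eta$ allows $\beta+1\le\beta'$'' is false, since $\eta<1$ only yields $\beta+\eta<\beta'$; the theorem permits $\beta\in(1,\beta'-\eta)$ which may exceed $\beta'-1$. More seriously, the global bound $\min\{1,\rho s\}\le\rho u/w$ forces you to control $\int q(z)\rho^{1-\eta}\,\ud z$ uniformly in $x$, and this is \emph{not} bounded under Condition \ref{a:super-exp-regular} (for Gaussian target and proposal it grows like $\exp(c|x|^2)$). The paper avoids both problems by splitting into $A_{x,w}$ and $R_{x,w}$ and exploiting $(\rho u/w)^{1-\eta}\le 1$ on $R_{x,w}$ and $(\rho u/w)^\eta\ge 1$ on $A_{x,w}$; this reduces the needed moment from $1+\beta$ to $\eta+\beta\le\beta'$ and eliminates the unbounded $\rho^{1-\eta}$ factor (Lemma \ref{lem:drift-w-large}). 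Your appeals to Lemma \ref{lem:mean-acc-rate} are also misplaced: that lemma upper-bounds a double integral against $Q_x(\ud w)w\,Q_y(\ud u)$, whereas here $w$ is fixed and, for the negative drift term, you need a \emph{lower} bound on $\int Q_y(\ud u)\min\{1,\rho u/w\}$; the paper uses $\min\{1,ab\}\ge\min\{1,a\}\min\{1,b\}$ and a direct tail estimate (Lemma \ref{lem:acc-prob}) instead.

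In Regime A, the ``reduction to the Jarner--Hansen integrand up to $\mathcal{O}(M_W/V_w(w))$'' is not a proof: on $[\underline{w},\bar{w}]$ the factor $V_w(w)$ is merely bounded, so the perturbation is $\mathcal{O}(M_W)$, which need not be small relative to the marginal geometric gap. The paper's Lemma \ref{lem:drift-w-bounded} does not attempt such a perturbation argument; it introduces a three-region decomposition $\bar{A}_x\cup D_x\cup\bar{R}_x$ in the $z$-variable (with threshold $c>1$), bounds each piece using the same $A_{x,w}/R_{x,w}$ tricks as above, uses that $q(D_x)\to 0$ as $|x|\to\infty$ under Condition \ref{a:super-exp-regular}, and only then sends $c$ and $|x|$ to infinity in the correct order. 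Without this structure the coupling between $\rho$ and $u/w$ through $\min\{1,\rho u/w\}$ cannot be disentangled.
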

%

\begin{pf} 
Let $\bar{w}\in[1,\infty)$ and $\delta_V'>0$ be as in Lemma~\ref{lem:drift-w-large}, so that
$\tilde{P} V(x,w) \le V(x,w) - \delta_V' V^{{(\beta-1)}/{\beta
}}(x,w)$ for
all $x\in\mathsf{X}$ and all $w\ge\bar{w}$.
Then apply Lemma~\ref{lem:drift-w-bounded} with the fixed
value of $\bar{w}$ to obtain a $M\in[1,\infty)$ and $\lambda\in[0,1)$
such that
%
\begin{equation}
\tilde{P} V(x,w) \le\lambda V(x,w) = V(x,w) - (1-\lambda)V(x,w), \label{eq:drift-geom-v}
\end{equation}
for all $w\in(0,\bar{w}]$ and $|x|\ge M$.
Lemma~\ref{lem:bounded-in-compacts} implies that \eqref{eq:drift-geom-v}
holds with all $x\in\mathsf{X}$ and $w\in(0,\underline{w}]$,
with some
$\lambda'\in[0,1)$. Because $V\ge1$, we conclude
the claim for $(x,w)\notin\mathsf{C}$ with $\delta_V\defeq
\min\{\delta_V',1-\lambda,1-\lambda'\}$.
Lemma~\ref{lem:bounded-in-compacts} implies the case $(x,w)\in
\mathsf{C}$.

The dependence on $b$, $\delta_V$ and $\mathsf{C}$ is clear from
the proofs of Lemmas
\ref{lem:drift-w-bounded} and \ref{lem:bounded-in-compacts}.
\end{pf}
%

%
\begin{remark}
\label{rem:general-moments-rwm} 
It is possible to generalise Theorem~\ref{thm:rwm-bounded-moments} for
nonpolynomial moments. In particular, we may let $V(x,w) = c_\pi^\eta
\pi^{-\eta}(x) \phi(w)$ where $\phi\dvtx(0,\infty)\to[1,\infty
)$ is
defined by
\[
\phi(w) \defeq\cases{ a(w), &\quad $w\in(0,1],$ \vspace*{2pt}
\cr
b(w), &\quad $w\in(1,
\infty)$,}
\]
with nonincreasing $a\dvtx(0,1]\to[1,\infty)$ and
nondecreasing $b\dvtx(1,\infty)\to[1,\infty)$ satisfying
\[
\lim_{w\to0+} w^{-\eta}a(w) = \infty \quad\mbox{and}\quad \lim
_{w\to\infty} b(w)/w = \infty,
\]
and for some $\gamma>\eta$
\[
\esssup_{x\in\mathsf{X}} \int_0^1 a(w)
Q_x(\ud w)<\infty \quad\mbox{and}\quad \esssup_{x\in\mathsf{X}} \int
_1^\infty b(w) w^\gamma Q_x(
\ud w) < \infty.
\]
Note that $a(w)$ and $b(w)$ must grow at least
polynomially as $w\to0+$ and $w\to\infty$, respectively.
For example, $b(w)=\exp(c_b w)$ allows one to establish the claim with
the stronger drift condition
\[
\tilde{P} V(x,w) \le V(x,w) - \hat{\delta}_V \frac{V(x,w)}{\log
\circ V(x,w) }
\qquad(x,w)\notin\mathsf{C},
\]
instead of the polynomial drift in \eqref{eq:def-drift-function}.
\end{remark}
%

We conjecture that the negative moment condition and the presence of
$w^{-\alpha}$ in
the drift function are not necessary in order to establish polynomial
ergodicity in general. It seems, however,
difficult to establish a one-step drift condition without any control
of the
behaviour of the distributions $Q_x$ near zero.

We first consider a simple result which is auxiliary to the other lemmas.

%
\begin{lemma}
\label{lem:acc-prob} 
We have the following bounds for all $x,z\in\mathsf{X}$, $w>0$,
$\hat{\alpha}>0$, and $\hat{\beta}>1$:
%
\begin{eqnarray*}\mathrm{(i)}&&\quad\int\min \biggl\{1,\frac{u}{w} \biggr\}
Q_x(\ud u) \ge
\frac{1}{w} \biggl(1 - \frac{1}{w^{\hat{\beta}-1}} \int u^{\hat{\beta}}
Q_x(\ud u) \biggr), 
\\
\mathrm{(ii)}&&\quad\int_{\{u\given(z,u)\in A_{x,w}\}} Q_{x+z}(\ud u) \ge1 - w^{\hat{\alpha}}
\biggl(\frac{\pi(x)}{\pi(x+z)} \biggr)^{\hat{\alpha}} \int u^{-\hat{\alpha}}
Q_{x+z}(\ud z). 
\end{eqnarray*}
\end{lemma}
%

\begin{pf} 
The bound (i) follows by writing
\begin{eqnarray*}
\int\min \biggl\{1,\frac{u}{w} \biggr\} Q_x(\ud u) &=&
\frac{1}{w} \biggl(1-\int_{u\ge w}(u-w) Q_x(
\ud u) \biggr)
\\
&\ge&\frac{1}{w} \biggl(1 - \int_{u\ge w}
uQ_x(\ud u) \biggr),
\end{eqnarray*}
and using the estimate $\mathbbm{I} \{u\ge w \}\le
(u/w)^{\hat{\beta}-1}$.
For (ii), similarly
\[
\int_{\{u\given(z,u)\in A_{x,w}\}} Q_{x+z}(\ud u) = 1 - \int
_{ \{u < w ({\pi(x)}/{(\pi(x+z))})  \}} Q_{x+z}(\ud u)
\]
and use $\mathbbm{I} \{u < w \vphantom{\int}\smash{\frac{\pi
(x)}{\pi (x+z)}} \}\le u^{-\hat{\alpha}}
 (w\frac{\pi(x)}{\pi(x+z)} )^{\hat{\alpha}}$.
\end{pf}
%

We next consider the case where $w$ is large,
and establish a polynomial drift in this case.

%
\begin{lemma}
\label{lem:drift-w-large} 
Suppose the conditions of Theorem~\ref{thm:rwm-bounded-moments} hold.
Then, there exist constants $\delta_V>0$ and
$\bar{w}\in[1,\infty)$
such that
\[
\tilde{P} V(x,w) \le V(x,w) - \delta_V V^{(\beta-1)/{\beta}}(x,w)\qquad
\mbox{for all $x\in\mathsf{X}$ and $w\in[\bar{w},\infty)$.}
\]
\end{lemma}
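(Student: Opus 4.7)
The plan is to decompose the one-step increment as
\[
\tilde{P} V(x,w) - V(x,w) = A(x,w) - V(x,w)\,B(x,w),
\]
where $A(x,w) \defeq \iint q(x,\ud y)\,Q_y(\ud u)\,\min\{1,r(x,y)u/w\}\,V(y,u)$ captures the gain from accepted proposals and $B(x,w)$ is the expected acceptance probability at $(x,w)$, and then to show that for $w$ large enough the negative term $V(x,w)\,B(x,w)$ dominates $A(x,w)$ by at least a constant multiple of $V(x,w)/w$. The conversion to the required form is immediate: since $V(x,w) = c_\pi^\eta\pi^{-\eta}(x)w^\beta$ on $w\ge 1$ and $\pi(x)\le c_\pi$, one checks directly that $V(x,w)/w \ge V^{(\beta-1)/\beta}(x,w)$.

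For the upper bound on $A(x,w)$, the key trick is the elementary inequality $\min\{1,t\}\le t^\eta$ for $t\ge 0$ combined with the symmetry identity $r(x,y)=\pi(y)/\pi(x)$. Substituting $V(y,u)=c_\pi^\eta\pi^{-\eta}(y)(u^{-\alpha}\vee u^\beta)$, the factors $\pi^\eta(y)$ and $\pi^{-\eta}(y)$ cancel exactly, yielding
\[
A(x,w) \le \frac{c_\pi^\eta}{w^\eta \pi^\eta(x)}\sup_y\int(u^{\eta-\alpha}\vee u^{\eta+\beta})\,Q_y(\ud u) \le M_W\,\frac{V(x,w)}{w^{\beta+\eta}},
\]
where the inner supremum is finite by Condition \ref{cond:w-moments} because $\alpha-\eta\le\alpha'$ and $\beta+\eta\le\beta'$ under the admissible choices in \eqref{eq:def-drift-function}. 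For the lower bound on $B(x,w)$, the plan is to restrict the outer integration to the marginal acceptance region $A_x=\{z\in\mathsf{X}:\pi(x+z)\ge\pi(x)\}$, on which $r(x,y)\ge 1$ and hence $\min\{1,r(x,y)u/w\}\ge\min\{1,u/w\}$, and then apply Lemma~\ref{lem:acc-prob}\,(i) with $\hat\beta=\beta'$ to obtain
\[
B(x,w) \ge q(A_x-x)\,\frac{1-M_W/w^{\beta'-1}}{w} \ge \frac{\alpha_0}{2w}
\]
for $w$ sufficiently large, conditional on a uniform lower bound $\inf_{x\in\mathsf{X}} q(A_x-x) \ge \alpha_0 > 0$.

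Combining these bounds and using $\beta+\eta>1$ (available in the admissible range by taking $\beta$ close enough to $\beta'-\eta$, which is the regime where the drift is nontrivial), one obtains, for $\bar{w}$ large enough that $M_W/\bar{w}^{\beta+\eta-1}\le \alpha_0/4$,
\[
\tilde{P}V(x,w) - V(x,w) \le \frac{V(x,w)}{w}\Big(\frac{M_W}{w^{\beta+\eta-1}} - \frac{\alpha_0}{2}\Big) \le -\frac{\alpha_0}{4}\,\frac{V(x,w)}{w} \le -\frac{\alpha_0}{4}\,V^{(\beta-1)/\beta}(x,w),
\]
so $\delta_V=\alpha_0/4$ works. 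The main obstacle is verifying the uniform lower bound $\inf_{x\in\mathsf{X}} q(A_x-x) \ge \alpha_0 > 0$: for $x$ in any compact set this follows from continuity and strict positivity of $\pi$ together with the assumption that $q$ is bounded away from zero near the origin, while for $|x|\to\infty$ it follows from the regular-contours assumption in Condition \ref{a:super-exp-regular}, which asymptotically places at least a half-space of proposal directions inside $A_x-x$, the geometric observation underlying Jarner and Hansen's geometric ergodicity result.
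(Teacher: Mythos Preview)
Your overall strategy---bound the accepted-mass contribution $A(x,w)$ from above and the acceptance probability $B(x,w)$ from below, then compare powers of $w$---is exactly the paper's approach, and your use of $\min\{1,t\}\le t^\eta$ to collapse the case analysis on $A_x$ versus $R_x$ is in fact slightly cleaner than the paper's four-way split (the paper obtains a term of order $w^{-\beta}$ on $A_{x,w}\cap A_x$, whereas your uniform bound gives $w^{-(\beta+\eta)}$ throughout).

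There is, however, a genuine gap in the lower bound on $B(x,w)$. The claim $\inf_{x\in\mathsf{X}} q(A_x)>0$ is false: at a global maximiser $x^*$ of $\pi$ one has $A_{x^*}=\{z:\pi(x^*+z)\ge \pi(x^*)\}$, which is Lebesgue-null when the maximum is strict, so $q(A_{x^*})=0$. Your argument that ``for $x$ in any compact set this follows from continuity and strict positivity of $\pi$'' does not address this. The paper circumvents the problem by working not with $A_x$ but with the enlarged set $\{z:\pi(x+z)/\pi(x)\ge \nu\}$ for some $\nu\in(0,1)$: Lemma~\ref{lem:containment-and-delta-acc}\,\eqref{item:delta-acc} guarantees that $\nu$ can be chosen so that this set has $q$-mass bounded away from zero uniformly in $x$. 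On that set one uses $\min\{1,r(x,y)u/w\}\ge \min\{1,r(x,y)\}\min\{1,u/w\}\ge \nu\min\{1,u/w\}$, and then your application of Lemma~\ref{lem:acc-prob}\,\eqref{eq:acc-prob1} goes through with $\nu$ in place of $1$. This is an easy fix, but it is essential.

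A minor secondary point: you invoke $\beta+\eta>1$ as though you may choose $\beta$ at this stage, but the lemma is stated for the $V$ already fixed in Theorem~\ref{thm:rwm-bounded-moments}, whose constraints $\eta\in(0,\alpha'\wedge 1)$ and $\beta\in(0,\beta'-\eta)$ do not force $\beta+\eta>1$. (The paper's own proof invokes ``$\beta>1$'' at the analogous step, so this looseness is shared; in practice the drift $V^{(\beta-1)/\beta}$ is only interesting for $\beta>1$.)
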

%

\begin{pf} 
We may write for $w\ge\bar{w}\ge1$
\begin{eqnarray*}
\frac{\tilde{P} V(x,w)}{V(x,w)} &=& \iint_{A_{x,w}} a_{x,w}(z,u)
Q_{x+z}(\ud u)q(\ud z)
\\
&&{}+ \iint_{R_{x,w}} b_{x,w}(z,u)
Q_{x+z}(\ud u)q(\ud z),
\end{eqnarray*}
where
%
\begin{eqnarray}
a_{x,w}(z,u)&\defeq& \biggl( \frac{\pi(x)}{\pi(x+z)} \biggr)^{\eta}
\frac{u^{-\alpha} \vee u^{\beta}}{w^{\beta}}, \label{eq:a-w-large}
\\
b_{x,w}(z,u)&\defeq& \biggl(\frac{\pi(x+z)}{\pi(x)} \biggr)^{1-\eta}
\frac{u^{1-\alpha}\vee u^{1+\beta}}{w^{1+\beta}} + \biggl(1-\frac{\pi(x+z)}{\pi(x)} \frac{u}{w} \biggr).
\label{eq:b-w-large}
\end{eqnarray}
We now estimate both integrals by partitioning their integration
domains into their intersections with the
acceptance and the rejection sets of the marginal algorithm. For
notational simplicity we denote
$A_{x,w} \cap R_x=A_{x,w} \cap(R_x \times\mathsf{W})$ etc.

The bound for the first integral is straightforward,
\[
\iint_{A_{x,w}\cap A_x} a_{x,w}(z,u) Q_{x+z}(\ud u)q(\ud z)
\le\frac{M_W}{w^{\beta}}.
\]
For the second one, observe that $1\le
 (\frac{\pi(x+z)}{\pi(x)}\frac{u}{w} )^{\eta}$ on
$A_{x,w}$, implying
\begin{eqnarray*}
&&\iint_{A_{x,w}\cap R_x} a_{x,w}(z,u) Q_{x+z}(\ud u)q(\ud z)
\\
&&\qquad\le\frac{1}{w^{\beta+\eta}} \iint_{A_{x,w}\cap R_x} u^{\eta-\alpha}\vee
u^{\eta+\beta} Q_{x+z}(\ud u) q(\ud z) \le\frac{M_W}{w^{\beta+\eta}},
\end{eqnarray*}
because $\beta+\eta\le\beta'$.
Similarly, because
$ (\frac{\pi(x+z)}{\pi(x)}\frac{u}{w} )^{1-\eta}\le1$ on
$R_{x,w}$ we have
\begin{eqnarray*}
&&\iint_{R_{x,w}} \biggl(\frac{\pi(x+z)}{\pi(x)} \biggr)^{1-\eta}
\frac{u^{1-\alpha}\vee u^{1+\beta}}{w^{1+\beta}} Q_{x+z}(\ud u) q(\ud z)
\\
&&\qquad\le\frac{1}{w^{\beta+\eta}} \iint_{R_{x,w}} u^{\eta-\alpha}\vee
u^{\eta+\beta} Q_{x+z}(\ud u) q(\ud z) \le\frac{M_W}{w^{\beta+\eta}}.
\end{eqnarray*}

We now turn to the crucial remainder, which approaches unity
as $w$ grows.
\begin{eqnarray*}
&&\iint_{R_{x,w}} \biggl(1-\frac{\pi(x+z)}{\pi(x)}\frac{u}{w} \biggr)
Q_{x+z}(\ud u) q(\ud z)
\\
&&\qquad= 1 - \iint\min \biggl\{1,\frac{\pi(x+z)}{\pi(x)}\frac
{u}{w} \biggr\}
Q_{x+z}(\ud u) q(\ud z)
\\
&&\qquad\le1 - \iint \min \biggl\{1,\frac{\pi(x+z)}{\pi(x)} \biggr\}\min \biggl\{1,
\frac
{u}{w} \biggr\} Q_{x+z}(\ud u) q(\ud z)
\\
&&\qquad\le1 - \frac{\nu}{w}\int_{\{z\given({\pi(x+z)}/{\pi(x)})\ge
\nu\}} \biggl(1-
\frac{M_W}{w^{\beta'-1}} \biggr)q(\ud z),
\end{eqnarray*}
by Lemma~\ref{lem:acc-prob}(i), where $\nu\in(0,1)$.
Lemma~\ref{lem:containment-and-delta-acc}(ii)
in Appendix \ref{171717171717-lemmas} implies the existence of a $\nu>0$
such that
$\inf_{x\in\mathsf{X}} q (\{z\given\frac{\pi(x+z)}{\pi(x)}\}
\ge
\nu )>0$.
Therefore, there exists a $\nu_2\in(0,\nu)$,
such that whenever $w$ is sufficiently large
\[
\iint_{R_{x,w}} \biggl(1-\frac{\pi(x+z)}{\pi(x)}\frac{u}{w} \biggr)
Q_{x+z}(\ud u) q(\ud z) \le1 -\frac{\nu_2}{w}.
\]
Because $\beta>1$, the terms of the order $w^{-\beta}$ or
$w^{-\eta-\beta}$ vanish faster than $w^{-1}$ when $w$ increases.
Consequently, we have
for any $\nu_3\in(0,\nu_2)$,
by further assuming $w$ sufficiently large, that
\begin{eqnarray*}
\tilde{P}V(x,w) &\le& \biggl(1- \frac{\nu_3}{w} \biggr)V(x,w)
\\
&=& V(x,w) - \nu_3 V^{\kappa}(x,w) \bigl(c_\pi
\pi^{-\eta}(x) \bigr)^{1-\kappa} \le V(x,w) - \nu_3
V^{\kappa}(x,w),
\end{eqnarray*}
where $\kappa= \frac{\beta-1}{\beta}\in(0,1)$.
\end{pf}
%

Next we deduce that in the regime where $w$ is bounded,
we have a geometric drift.

%
\begin{lemma}
\label{lem:drift-w-bounded} 
Assume the conditions of Theorem~\ref{thm:rwm-bounded-moments} hold,
and let $\bar{w}\in[1,\infty)$.
Then, there exist constants $\lambda\in[0,1)$ and $M\in[1,\infty)$
such that
\[
\tilde{P} V(x,w) \le\lambda V(x,w) \qquad\mbox{for all $w\in(0,\bar{w}]$, $|x|\ge M$.}
\]
\end{lemma}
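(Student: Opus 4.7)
The strategy is to adapt Jarner and Hansen's proof of geometric ergodicity of the RWM under Condition \ref{a:super-exp-regular} so as to accommodate the bounded weight variable $w\in(0,\bar w]$. As in the proof of Lemma \ref{lem:drift-w-large}, I start from the decomposition
\[
\frac{\tilde P V(x,w)}{V(x,w)}=\iint_{A_{x,w}} a_{x,w}(z,u)\,Q_{x+z}(\ud u)q(\ud z)+\iint_{R_{x,w}} b_{x,w}(z,u)\,Q_{x+z}(\ud u)q(\ud z),
\]
with $a_{x,w}$ and $b_{x,w}$ defined as in \eqref{eq:a-w-large}--\eqref{eq:b-w-large}. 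Unlike in the large-$w$ case, the factor $1/\phi(w)$ (where $\phi(w)\defeq w^{-\alpha}\vee w^\beta$) is now bounded on $(0,\bar w]$ and cannot by itself provide smallness, so the needed decay will have to come from the super-exponential decay of $\pi$ and the regular-contour assumption for $|x|$ large.

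Next I would split the $z$-integration into the marginal acceptance region $A_x$ and its complement $R_x$. On the $R_x$-side, super-exponential decay forces $\pi(x+z)/\pi(x)\to 0$ pointwise on the interior of $R_x$ as $|x|\to\infty$, which makes the factor $(\pi(x+z)/\pi(x))^{1-\eta}$ appearing in the acceptance part of $b_{x,w}$ vanish; combined with the uniform moment bound $\int (u^{\eta-\alpha}\vee u^{\eta+\beta})Q_{x+z}(\ud u)\le M_W$ (valid because $\beta<\beta'-\eta$ and $\alpha\le\alpha'$) and dominated convergence, this piece contributes $o(1)$ uniformly in $w\in(0,\bar w]$. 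The remaining pure-rejection piece is bounded by $q(R_x)$, and Lemma \ref{lem:containment-and-delta-acc} together with the regular-contour condition gives $\limsup_{|x|\to\infty}q(R_x)<1$. On the $A_x$-side the factor $(\pi(x)/\pi(x+z))^\eta$ is at most $1$ and tends to $0$ pointwise on the inward cone where $\pi(x+z)/\pi(x)\to\infty$; its $q$-integral against $\int\phi(u)Q_{x+z}(\ud u)\le M_W$ therefore vanishes as $|x|\to\infty$, handling the $A_x\cap A_{x,w}$ contribution as well as the $A_x\cap R_{x,w}$ correction (the latter exploiting that $u<w/r\le\bar w$ there, together with the negative-moment bound $\int u^{-\alpha'}Q_{x+z}(\ud u)\le M_W$).

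The main obstacle is making these estimates genuinely uniform in $w\in(0,\bar w]$, in particular for $w$ close to $0$, where $\phi(w)=w^{-\alpha}$ blows up. Fortunately this regime turns out to be favourable: $\phi(u)/\phi(w)=w^\alpha\phi(u)$ becomes small for typical $u$, so both $a_{x,w}$ and the acceptance part of $b_{x,w}$ shrink automatically, while on $R_{x,w}$ one is forced into $u<w/r\le w$, a region whose $Q_{x+z}$-mass vanishes as $w\to 0$ by the negative-moment bound. Putting the pieces together yields
\[
\limsup_{|x|\to\infty}\,\sup_{w\in(0,\bar w]}\,\frac{\tilde P V(x,w)}{V(x,w)}\le\lambda_0
\]
for some $\lambda_0<1$, so that any $\lambda\in(\lambda_0,1)$ can be realised by choosing $M$ sufficiently large, completing the proof.
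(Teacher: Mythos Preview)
Your outline has the right shape but there is a genuine gap in the dominated-convergence step. You assert that on $R_x$ the ratio $\pi(x+z)/\pi(x)$ tends to $0$ ``pointwise'' as $|x|\to\infty$, and similarly $\to\infty$ on $A_x$. This is not well-posed: the sets $R_x,A_x$ depend on $x$, and for a \emph{fixed} $z$ the ratio $\pi(x+z)/\pi(x)$ need not converge at all as $|x|\to\infty$. Even along a ray it may converge to a finite nonzero value for $z$ tangent to the contour through $x$ (take $\pi(x)=\exp(-x_1^2-10x_2^2)$, $x=(r,0)$, $z=(0,1)$: the ratio is $e^{-10}$ for all $r$). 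While such $z$ form a null set in nice examples, Condition~\ref{a:super-exp-regular} only bounds the angle between $\nabla\pi(x)$ and $x$, so the contour normals need not converge along a sequence $x_n$, and an a.e.-convergence argument under $q$ would itself require work. More seriously, the lemma asks for a bound uniform over all $|x|\ge M$, not along a single sequence; passing to subsequences and extracting a limit direction does not directly yield this uniformity.

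The paper avoids pointwise convergence entirely by a three-level partition of the $z$-variable with a \emph{fixed} threshold $c>1$: set $\bar A_x=\{z:\pi(x+z)/\pi(x)\ge c\}$, $\bar R_x=\{z:\pi(x+z)/\pi(x)\le 1/c\}$, and the annulus $D_x$ in between. On $\bar R_x$ and on $\bar A_x\cap R_{x,w}$ one gains an explicit factor $c^{-(\gamma-\eta)}$ or $c^{-\eta}$ (for a suitable $\gamma\in(\eta,\alpha\wedge1)$ with $\gamma+\beta\le\beta'$) against $M_W$, uniformly in $x$ and in $w\le\bar w$. On $\bar A_x\cap A_{x,w}$ the key negative contribution appears: by Lemma~\ref{lem:acc-prob}\,\eqref{eq:acc-prob2} the acceptance probability is at least $1-M_W(\bar w/c)^{\alpha'}$, and $\liminf_{|x|\to\infty}q(\bar A_x)>0$ by Lemma~\ref{lem:containment-and-delta-acc}. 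On $D_x$ one has only crude bounds involving powers of $c$ and $q(D_x)$, but Condition~\ref{a:super-exp-regular} gives $q(D_x)\to0$ as $|x|\to\infty$. The order of quantifiers is essential: first choose $c$ large so that the $\bar R_x$ and $\bar A_x\cap R_{x,w}$ contributions and $M_W(\bar w/c)^{\alpha'}$ are all small, \emph{then} choose $M$ large so that $q(D_x)$ absorbs the $c$-dependent constants on $D_x$. This is precisely the quantitative substitute for the DCT step you attempt.

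Two minor points. First, the functions $a_{x,w},b_{x,w}$ of \eqref{eq:a-w-large}--\eqref{eq:b-w-large} carry $w^\beta$ in the denominator, which is only correct for $w\ge1$; for $w\in(0,\bar w]$ you need $\phi(w)=w^{-\alpha}\vee w^\beta$ throughout (the paper in fact rewrites the ratio as $1+\int\hat a+\int\hat b$ with the $\phi(w)$-denominator, cf.~\eqref{eq:a-def-x-large}--\eqref{eq:b-def-x-large}). Second, once the threshold-$c$ bounds are in place the small-$w$ regime does not need separate treatment here, since all estimates are already uniform in $w\in(0,\bar w]$.
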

%

\begin{pf} 
We may write
\begin{eqnarray*}
\frac{\tilde{P} V(x,w)}{V(x,w)} &=& 1 + \iint_{A_{x,w}} \hat{a}_{x,w}(z,u)
Q_{x+z}(\ud u) q(\ud z)
\\
&&{} +\iint_{R_{x,w}} \hat{b}_{x,w}(z,u)
Q_{x+z}(\ud u) q(\ud z),
\end{eqnarray*}
where
%
\begin{eqnarray}
\hat{a}_{x,w}(z,u) &\defeq &\biggl( \frac{\pi(x)}{\pi(x+z)}
\biggr)^{\eta} \frac{u^{-\alpha} \vee
u^{\beta}}{w^{-\alpha} \vee
w^{\beta}}-1, \label{eq:a-def-x-large}
\\
\hat{b}_{x,w}(z,u) &\defeq& \biggl(\frac{\pi(x+z)}{\pi(x)}
\biggr)^{1-\eta} \frac{u}{w} \biggl[ \frac{u^{-\alpha} \vee
u^{\beta}}{w^{-\alpha} \vee
w^{\beta}} - \biggl(
\frac{\pi(x+z)}{\pi(x)} \biggr)^{\eta} \biggr]. \label{eq:b-def-x-large}
\end{eqnarray}

Fix a constant $c>1$ and define the following subsets:
$\bar{A}_x \defeq
\{z\given
\frac{\pi(x+z)}{\pi(x)} \ge c \}$ and $\bar{R}_x \defeq\{z\given
\frac{\pi(x+z)}{\pi(x)} \le\frac{1}{c} \}$, and the annulus
between these two sets as $D_x \defeq(\bar{A}_x\cup
\bar{R}_x)^\complement= \{z\given\frac{1}{c} <
\frac{\pi(x+z)}{\pi(x)}< c\}$. Compute
%
\begin{eqnarray}\label{eq:bounded-d-a}
&&\int_{D_x} \int_{(z,u)\in A_{x,w}}
\hat{a}_{x,w}(z,u) Q_{x+z}(\ud u) q(\ud z)
\nonumber
\\[-8pt]
\\[-8pt]
\nonumber
&&\qquad\le\frac{c^\eta}{w^{-\alpha}\vee w^{\beta}} \int_{D_x} \int u^{-\alpha}\vee
u^{\beta} Q_{x+z}(\ud u) q(\ud z) \le M_W
c^\eta q(D_x)
\end{eqnarray}
and
%
\begin{eqnarray} \label{eq:bounded-d-r}\qquad
&&\int_{D_x} \int_{(z,u)\in R_{x,w}}
\hat{b}_{x,w}(z,u) Q_{x+z}(\ud u) q(\ud z)
\nonumber
\\[-8pt]
\\[-8pt]
\nonumber
&&\qquad\le c^{1-\eta} \int_{D_x} \int_{u< cw}
\biggl(\frac{u}{w} \biggr) \frac{u^{-\alpha}\vee u^{\beta}}{w^{-\alpha}\vee
w^{\beta}} Q_{x+z}(\ud u)q(\ud
z) \le M_W c^{2-\eta}q(D_x).
\end{eqnarray}

Let then $\gamma\in(\eta,\alpha\wedge1)$
such that $\gamma+\beta\le\beta'$
and observe that
$ (\frac{\pi(x+z)}{\pi(x)}\frac{u}{w} )^{1-\gamma}\le1$ on
$R_{x,w}$, and thereby
%
\begin{eqnarray}
\label{eq:bounded-a-a}&& \int_{\bar{R}_x}\int_{(z,u)\in R_{x,w}}
\hat{b}_{x,w}(z,u) Q_{x+z}(\ud u) q(\ud z)
\nonumber\\
&&\qquad\le\int_{\bar{R}_x} \int_{(z,u)\in R_{x,w}} \biggl(
\frac{\pi(x+z)}{\pi(x)} \biggr)^{\gamma-\eta} \frac{u^{\gamma-\alpha} \vee
u^{\gamma+\beta}}{w^{\gamma-\alpha} \vee
w^{\gamma+\beta}} Q_{x+z}(\ud
u) q(\ud z)
\\
&&\qquad\le M_W c^{-(\gamma-\eta)}.
\nonumber
\end{eqnarray}
Similarly, observe that
$ (\frac{\pi(x)}{\pi(x+z)}\frac{w}{u} )^{\gamma}\le1$ on
$A_{x,w}$ and so
%
\begin{eqnarray} \label{eq:bounded-r-a}
&&\int_{\bar{R}_x} \int_{(z,u)\in A_{x,w}}
\hat{a}_{x,w}(z,u) Q_{x+z}(\ud u) q(\ud z)
\nonumber\\
&&\qquad\le \frac{c^{-(\gamma-\eta)}}{w^{\gamma-\alpha} \vee
w^{\gamma+\beta}} \int_{\bar{R}_x} \int_{(z,u)\in A_{x,w}}
u^{\gamma-\alpha}\vee u^{\gamma+\beta} Q_{x+z}(\ud u) q(\ud z)
\\
&&\qquad\le M_W c^{-(\gamma-\eta)}.
\nonumber
\end{eqnarray}

It holds that $1\le (\frac{\pi(x)}{\pi(x+z)}\frac{w}{u} )$
on $R_{x,w}$, so we have
%
\begin{eqnarray}\label{eq:bounded-a-r}
&&\int_{\bar{A}_x} \int_{(z,u)\in R_{x,w}}
\hat{b}_{x,w}(z,u) Q_{x+z}(\ud u) q(\ud z)
\nonumber\\
&&\qquad \le\frac{1}{w^{-\alpha} \vee
w^{\beta}} \int_{\bar{A}_x} \int_{(z,u)\in R_{x,w}}
\biggl(\frac{\pi(x+z)}{\pi(x)} \biggr)^{-\eta} u^{-\alpha}\vee
u^{\beta} Q_{x+z}(\ud u) q(\ud z)
\\
&&\qquad \le M_W c^{-\eta}.
\nonumber
\end{eqnarray}

We are left with the term that will yield the geometric drift
when $|x|$ is large,
\begin{eqnarray*}
&&\int_{\bar{A}_x} \int_{(z,u)\in A_{x,w}}
\hat{a}_{x,w}(z,u) Q_{x+z}(\ud u) q(\ud z)
\\
&&\qquad\le\frac{M_W c^{-\eta} }{w^{-\alpha}\vee w^\beta} - \int_{\bar{A}_x} q(\ud z) \int
_{\{u: (z,u)\in A_{x,w}\}} Q_{x+z}(\ud u)
\\
&&\qquad\le M_W
c^{-\eta} - q(\bar{A}_x) \biggl(1- M_W \biggl(
\frac{w}{c} \biggr)^{\alpha'} \biggr),
\end{eqnarray*}
by Lemma~\ref{lem:acc-prob}(ii).
Lemma~\ref{lem:containment-and-delta-acc}(iii) implies
that $\delta\defeq\liminf_{|x|\to\infty} q(\bar{A}_x)>0$.

Let $\delta'\in(0,\delta)$ and
fix $\varepsilon>0$ sufficiently small so that
$6\varepsilon- \delta(1-\varepsilon)^2\le-\delta'$, and let $c>1$
be sufficiently
large so that $M_W c^{-\eta}\le\varepsilon$ and $M_W
 (\frac{\bar{w}}{c} )^{\alpha'}\le\varepsilon$, and also that
all \eqref{eq:bounded-a-a}, \eqref{eq:bounded-r-a}
and \eqref{eq:bounded-a-r} are bounded by $\varepsilon$.
Condition \ref{a:super-exp-regular} implies that
$\limsup_{|x|\to\infty} q(D_x) = 0$,
and therefore there exists
$M=M(c,\varepsilon)>0$ such that
$\eqref{eq:bounded-d-a}+\eqref{eq:bounded-d-r}\le\varepsilon$
for all $|x|\ge M$.
By possibly increasing the bound
$M$ to ensure that $q(\bar{A}_x)\geq\delta(1-\varepsilon)$,
we have that the claim holds for all $|x|\ge
M$ with the constant $\lambda=1-\delta'$.
\end{pf}

We complete the results above by considering in particular very small
values of~$w$.

%
\begin{lemma}
\label{lem:bounded-in-compacts} 
Suppose the conditions of Theorem~\ref{thm:rwm-bounded-moments} hold,
and let $\bar{w},M\in[1,\infty)$. Then,
there exist constants $\underline{w}\in(0,1)$, $\lambda\in(0,1)$ and
$b\in[1,\infty)$ such that
%
\begin{eqnarray}
\tilde{P} V(x,w) &\le& b,\qquad\mbox{for $|x|\le M$ and $w\in[\underline{w},
\bar{w}]$},\label{eq:bounded-on-c}
\\
\tilde{P} V(x,w) &\le&\lambda V(x,w),\qquad\mbox{for $x\in\mathsf{X}$ and $w\in(0,
\underline{w}]$.} \label{eq:drift-with-small-w}
\end{eqnarray}
\end{lemma}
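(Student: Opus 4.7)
The lemma splits into two claims that I would establish separately, since the regimes $(|x|\le M,\,w\in[\underline{w},\bar{w}])$ and $(x\in\mathsf{X},\,w\in(0,\underline{w}])$ call for quite different bounds on $\min\{1,r(x,y)u/w\}V(y,u)$.

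For \eqref{eq:bounded-on-c}, note that
\[
\tilde{P}V(x,w)\le V(x,w)+\int q(x,\ud y)Q_y(\ud u)\min\{1,r(x,y)u/w\}V(y,u),
\]
and $V(x,w)$ is already bounded on $\mathsf{C}$ by continuity and positivity of $\pi$ on $\{|x|\le M\}$. To bound the integral I would apply the elementary inequality $\min\{1,ab\}\le(ab)^\gamma$ with a choice $\gamma\in(\eta,\beta'-\beta)$, which is non-empty since $\beta<\beta'-\eta$. Combined with $\pi^{\gamma-\eta}(y)\le c_\pi^{\gamma-\eta}$, the integrand is bounded pointwise by $c_\pi^\gamma \pi^{-\gamma}(x) w^{-\gamma}(u^{\gamma-\alpha}\vee u^{\gamma+\beta})$, and the $u$-integral against $Q_y$ is at most $M_W$ (since $\gamma-\alpha\le\alpha'$ and $\gamma+\beta\le\beta'$). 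As $\pi(x)$ and $w$ are bounded below on $\mathsf{C}$, the integral is uniformly bounded there.

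For \eqref{eq:drift-with-small-w}, write $\tilde{P}V(x,w)/V(x,w)=(1-\tilde{\alpha}(x,w))+I(x,w)$ where $I(x,w)$ denotes the acceptance integral normalised by $V(x,w)$ and $\tilde{\alpha}(x,w)$ the expected acceptance probability. Since $w\le 1$ one has $V(y,u)/V(x,w)=(\pi(x)/\pi(y))^\eta(u^{-\alpha}\vee u^\beta)w^\alpha$. On $A_{x,w}$ I would bound $(\pi(x)/\pi(y))^\eta\le(u/w)^\eta$, while on $R_{x,w}$ I would combine $\min\{1,r(x,y)u/w\}\le\pi(y)u/(\pi(x)w)$ with $(\pi(y)/\pi(x))^{1-\eta}\le(w/u)^{1-\eta}$; both routes reduce the integrand to $w^{\alpha-\eta}(u^{\eta-\alpha}\vee u^{\eta+\beta})$, so $I(x,w)\le 2M_W w^{\alpha-\eta}$ by Condition \ref{cond:w-moments} (using $\alpha-\eta\le\alpha'$ and $\eta+\beta\le\beta'$). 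For the acceptance probability,
\[
\tilde{\alpha}(x,w)\ge\int_{A_x}q(x,\ud y)\int Q_y(\ud u)\min\{1,u/w\}\ge q(x,A_x)(1-M_W w^{\alpha'}),
\]
where $Q_y([0,w))\le M_W w^{\alpha'}$ is Markov's inequality applied to $u^{-\alpha'}$.

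The final ingredient is the uniform positivity $\delta_A\defeq\inf_{x\in\mathsf{X}}q(x,A_x)>0$. For large $|x|$ this follows from Lemma \ref{lem:containment-and-delta-acc}\eqref{item:c-acc} upon noticing that $q(x,A_x)\ge q(\bar{A}_x)$ for any $c\ge 1$, and for bounded $|x|$ it follows from the continuity and positivity of $\pi$ together with the local lower bound on $q$ in Condition \ref{a:super-exp-regular}. With $\underline{w}$ chosen small enough that $M_W\underline{w}^{\alpha'}\le 1/2$ and $2M_W\underline{w}^{\alpha-\eta}\le\delta_A/4$, one obtains $\tilde{P}V(x,w)/V(x,w)\le 1-\delta_A/4=:\lambda<1$ for all $w\in(0,\underline{w}]$. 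The main technical step is this uniform positivity of $q(x,A_x)$, which is where the super-exponential tails with regular contours of Condition \ref{a:super-exp-regular} play an essential role.
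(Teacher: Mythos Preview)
Your overall strategy matches the paper's: bound $\tilde{P}V/V$ by $(1-\tilde{\alpha}(x,w))$ plus a normalised acceptance integral, show the latter is $O(w^{\alpha-\eta})$ via the $A_{x,w}/R_{x,w}$ split with exponent $\eta$, and bound $\tilde{\alpha}(x,w)$ from below. For \eqref{eq:bounded-on-c} your use of $\min\{1,ab\}\le(ab)^\gamma$ with $\gamma\in(\eta,\beta'-\beta)$ is a clean variant of what the paper does (the paper recycles the same exponent-$\eta$ split and simply notes $\tilde{P}V\le(1+M_W)V$); both are fine.

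There is, however, a genuine gap in your proof of \eqref{eq:drift-with-small-w}. The claimed uniform positivity $\delta_A=\inf_{x\in\mathsf{X}}q(x,A_x)>0$ is false in general: at any strict global maximum $x^*$ of $\pi$ (and Condition \ref{a:super-exp-regular} certainly allows, say, a Gaussian target) the set $A_{x^*}=\{z:\pi(x^*+z)\ge\pi(x^*)\}$ reduces to $\{0\}$, which has Lebesgue measure zero, so $q(A_{x^*})=0$. Your appeal to continuity and positivity of $\pi$ for bounded $|x|$ does not save this. The paper avoids the problem by working instead with $A_x^\delta=\{z:\pi(x+z)/\pi(x)\ge\delta\}$ for some fixed $\delta\in(0,1)$; Lemma \ref{lem:containment-and-delta-acc}\eqref{item:delta-acc} guarantees $\inf_{x\in\mathsf{X}}q(A_x^\delta)>0$ for $\delta$ small enough. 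On $A_x^\delta$ one has $r(x,x+z)\ge\delta$, so the Markov step becomes $\int Q_{x+z}(\ud u)\min\{1,\delta u/w\}\ge 1-M_W(w/\delta)^{\alpha'}$ (this is Lemma \ref{lem:acc-prob}\eqref{eq:acc-prob2}), after which your argument goes through unchanged with $\underline{w}$ chosen small relative to $\delta$.
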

%

\begin{pf} 
From the proof of Lemma~\ref{lem:drift-w-bounded}, we have
\[
\frac{\tilde{P} V(x,w)}{V(x,w)} \le1
- \biggl(\iint_{A_{x,w}} Q_{x+z}(\ud u)
q(\ud z) \biggr) + \tilde{a}_{x,w} + \tilde{b}_{x,w},
\]
where
\begin{eqnarray*}
\tilde{a}_{x,w}&\defeq&\iint_{A_{x,w}} \biggl( \frac{\pi(x)}{\pi(x+z)}
\biggr)^{\eta} \frac{u^{-\alpha} \vee
u^{\beta}}{w^{-\alpha} \vee
w^{\beta}} Q_{x+z}(\ud u) q(\ud z),
\\
\tilde{b}_{x,w}&\defeq&\iint_{R_{x,w}} \biggl(\frac{\pi(x+z)}{\pi
(x)}
\biggr)^{1-\eta} \frac{u}{w} \frac{u^{-\alpha} \vee
u^{\beta}}{w^{-\alpha} \vee
w^{\beta}} Q_{x+z}(\ud
u) q(\ud z).
\end{eqnarray*}
Because $ (\frac{\pi(x)}{\pi(x+z)}\frac{w}{u} )^{\eta}\le1$
on $A_{x,w}$ and $ (\frac{\pi(x+z)}{\pi(x)}\frac{u}{w}
)^{1-\eta
}\le1$
on $R_{x,w}$,
\[
\tilde{a}_{x,w}+\tilde{b}_{x,w} \le\iint \frac{u^{\eta-\alpha}\vee u^{\eta+\beta}}{
w^{\eta-\alpha}\vee w^{\beta+\alpha}}
Q_{x+z}(\ud u) q(\ud z) \le\frac{M_W}{w^{\eta-\alpha}\vee w^{\beta+\alpha}}.
\]
This is enough to show that $\tilde{P}V(x,w) \le(1+M_W) V(x,w)$ for
all $(x,w)\in\mathsf{X}\times\mathsf{W}$.
Because $V$ is bounded on $\{|x|\le M,
w\in[\underline{w},\bar{w}]\}$,
this implies the existence of
$b=b(\bar{w},\underline{w},M)<\infty$ such that \eqref{eq:bounded-on-c}
holds.

Consider then \eqref{eq:drift-with-small-w}. Let $\delta>0$ be small
enough so that $\inf_{x\in\mathsf{X}} q(A_x^\delta)\ge\varepsilon>0$,
where $A_x^\delta\defeq\{z\given\frac{\pi(x+z)}{\pi(x)}\ge\delta
\}$.
Then
\begin{eqnarray*}
\iint_{A_{x,w}} Q_{x+z}(\ud u) q(\ud z) &\ge&\int
_{A_x^\delta} q(\ud z) \int_{\{u\given(z,u)\in A_{x,w}\}}
Q_{x+z}(\ud u)
\\
&\ge&\int_{A_x^\delta} q(\ud z) \biggl(1-M_W \biggl(
\frac{w}{\delta} \biggr)^{\alpha'} \biggr) \ge\frac{\varepsilon}{2}
\end{eqnarray*}
for $w\in(0,\underline{w}]$ if $\underline{w}$ is small enough.
We may further decrease $\underline{w}$ to ensure that
$\tilde{a}_{x,w}+\tilde{b}_{x,w}\le\varepsilon/4$ for all $w\in
(0,\bar{w}]$
and conclude \eqref{eq:drift-with-small-w}
with $\lambda= 1-\varepsilon/4$.
\end{pf}
%


\subsection{Nonuniform moment bounds}
\label{sec:nonuniform-moment-bounds} 

We replace the uniform moments in Condition \ref{cond:w-moments}
here with the following assumption, which allows the moments of the
distributions $\{Q_x\}_{x\in\mathsf{X}}$ to grow in the tails of $\pi$.

%
\begin{condition}
\label{cond:w-moments-nonuniform} 
Let $\hat{w}\dvtx\mathsf{X}\to[1,\infty)$
be a function bounded on compact sets and tending to infinity as
$|x|\to\infty$. Let $\psi\dvtx(0,\infty)\to[1,\infty)$ be a
nonincreasing function such that $\psi(t)\to\infty$ as $t\to0$, and
define $g(x)\defeq\psi(\pi(x))$.

\begin{longlist}[(iii)]
\item[(i)]
There exist constants $\alpha'>0$ and $\beta'>1$ such
that
\[
\esssup_{x\in\mathsf{X}} g^{-1}(x) \int u^{-\alpha'}\vee
u^{\beta'} Q_x(\ud u) \le1,
\]
where the essential supremum is taken with respect to the Lebesgue
measure.
\item[(ii)]
There exist constants $\xi_w\in(0,\beta'-1)$
and $\xi_\pi\in(0,\beta'-1-\xi_w)$,
%
\begin{equation}
\sup_{x\in\mathsf{X}} \frac{g(x)}{\hat{w}^{\xi_\pi}(x)} \sup_{z\in R_x}
\biggl[ \biggl(\frac{\pi(x+z)}{\pi(x)} \biggr)^{\xi_\pi} \frac{g(x+z)}{g(x)}
\biggr] <\infty, \label{eq:g-pi-balance}
\end{equation}
where $R_x \defeq\{z\given\frac{\pi(x+z)}{\pi(x)}< 1\}$ is the set of
possible rejection for the marginal random-walk Metropolis algorithm.
\item[(iii)]
For any constant $b>1$, one must have
%
\begin{equation}
\sup_{x\in\mathsf{X}} \frac{M_W (b(|x|\vee1) )}{\hat{w}^{\xi_w}(x)} < \infty, \label{eq:M-w-balance}
\end{equation}
where $M_W\dvtx(0,\infty)\to(0,\infty)$ is defined as follows:
\[
M_W(r)\defeq\esssup_{|x|\le r} \int u^{-\alpha'}\vee
u^{\beta'} Q_x(\ud u) \le\esssup_{|x|\le r} g(x),
\]
where the essential supremum is taken with respect to the Lebesgue
measure.
\end{longlist}
\end{condition}
%

The assumptions in Condition \ref{cond:w-moments-nonuniform} may appear
rather implicit
and technical at first. However they, together with additional assumptions
required in Theorem~\ref{thm:rwm-unbounded-moments} below, are implied
by the more meaningful assumptions in Condition \ref
{cond:strong-super-exp-regular} and
Corollary~\ref{cor:strongly-super-exp},
whose proof may help the reader gain some intuition.

%
\begin{theorem}
\label{thm:rwm-unbounded-moments} 
Suppose $\tilde{P}$ is a pseudo-marginal kernel corresponding to a random
walk Metropolis with invariant density $\pi$ and increment
proposal density $q$ satisfying
Condition \ref{a:super-exp-regular}.
Suppose Condition \ref{cond:w-moments-nonuniform} holds
with some $\alpha'>0$ and
$\beta'>1$. Define $V\dvtx\mathsf{X}\times\mathsf{W}\to[1,\infty
)$ as in
\eqref{eq:def-drift-function}, where the constant exponents
satisfy
\[
\eta\in \bigl(0,\alpha'\wedge\bigl(\beta'-1-
\xi_w\bigr)\wedge(1-\xi_\pi ) \bigr),\qquad \alpha\in\bigl(\eta,
\alpha'\bigr], \beta\in\bigl(1+\xi_w-\eta,
\beta'-\eta\bigr)
\]
and $\eta\le(\beta'-\beta)\wedge1-\xi_\pi$.

Furthermore, suppose that there
exists a function $c\dvtx\mathsf{X}\to[1,\infty)$ bounded on
compact sets
such that
$\limsup_{|x|\to\infty} c(x)e^{-x} < \infty$ and
%
\begin{equation}
\limsup_{|x|\to\infty} \frac{\hat{w}^{\xi_\pi}(x)}{c^{\xi_c}(x)} = 0 \qquad
\mbox{where }
\xi_c \in\bigl(0, \bigl[\bigl(\beta'-\beta\bigr)\wedge
\alpha\wedge1\bigr] - \eta- \xi _\pi\bigr), \label{eq:c-bigger-w}
\end{equation}
and that for any constant $b\in[1,\infty)$
%
\begin{equation}
\limsup_{|x|\to\infty} M_W\bigl(b|x|\bigr) \max \biggl\{
q(D_x), \frac{1}{c^{\eta}(x)}, \biggl(\frac{\hat{w}(x)}{c(x)}
\biggr)^{\alpha'} \biggr\} = 0, \label{eq:M-vanish}
\end{equation}
where $D_x \defeq \{z\given\frac{1}{c(x)} \le\frac{\pi
(x+z)}{\pi(x)}
\le c(x)  \}$.

Then, there exist constants $\bar{w}, M,b\in[1,\infty)$,
$\underline{w}\in(0,1]$ and $\delta_V>0$ such that the polynomial
drift inequality \eqref{eq:def-drift-condition} holds.
Furthermore, the constants depend only on those of the marginal algorithm,
the quantities $\alpha', \beta', \xi_w, \xi_\pi$, $\psi$,
$\hat{w}$ involved in Condition \ref{cond:w-moments-nonuniform},
including the upper bounds in \eqref{eq:g-pi-balance} and
\eqref{eq:M-w-balance} (as a function of $b$),
the chosen $\eta$, $\alpha$, $\beta$, $c$ and $\xi_c$, and the upper
bounds \eqref{eq:c-bigger-w} and \eqref{eq:M-vanish}.
\end{theorem}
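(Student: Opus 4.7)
The plan is to follow the three-regime decomposition used in the proof of Theorem \ref{thm:rwm-bounded-moments}, namely (i) $w \ge \bar{w}$ large, (ii) $w \in (0,\bar{w}]$ with $|x| \ge M$ large, and (iii) $w \in (0,\underline{w}]$ with arbitrary $x$, together with a compactness argument on the remaining bounded region $\mathsf{C}$. The key difference from the uniform case is that the moment bound $\int u^{-\alpha'} \vee u^{\beta'} Q_x(\ud u)$ is now controlled by $g(x) = \psi(\pi(x))$, which may blow up in the tails of $\pi$, and by $M_W(b|x|)$ on compact sets. Condition \ref{cond:w-moments-nonuniform}\eqref{item:g-pi-w-bound}--\eqref{item:m-what-bound} then ties these to the auxiliary function $\hat{w}(x)$ through the exponents $\xi_\pi$ and $\xi_w$, and the chosen ranges of $\eta, \alpha, \beta, \xi_c$ are precisely what make the error terms produced by these non-uniform factors dominated by the other decaying quantities in $V$.

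For the analogue of Lemma \ref{lem:drift-w-large} (large $w$), I would decompose $\tilde P V(x,w)/V(x,w)$ exactly as in \eqref{eq:a-w-large}--\eqref{eq:b-w-large}, partitioning the domain into $A_{x,w} \cap A_x$, $A_{x,w} \cap R_x$, and $R_{x,w}$. The dominant term
\[
\iint_{R_{x,w}}\Big(1 - \tfrac{\pi(x+z)}{\pi(x)}\tfrac{u}{w}\Big) Q_{x+z}(\ud u) q(\ud z)
\le 1 - \tfrac{\nu}{w}\big(1 - w^{-(\beta'-1)} M_W(\cdots)\big)\, q\big(\{z : \tfrac{\pi(x+z)}{\pi(x)} \ge \nu\}\big)
\]
is controlled via Lemma \ref{lem:acc-prob}\eqref{eq:acc-prob1} and the uniform lower bound on $q\big(\{z : \pi(x+z)/\pi(x) \ge \nu\}\big)$ from Lemma \ref{lem:containment-and-delta-acc}\eqref{item:delta-acc}. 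The error pieces now involve $g(x+z)$ factors, which in $R_x$ are absorbed by $(\pi(x+z)/\pi(x))^{\xi_\pi}$ through \eqref{eq:g-pi-balance}, leaving a residual $g(x)/\hat{w}^{\xi_\pi}(x)$ that is bounded. Taking the threshold to be $\bar{w}(x) \asymp \hat{w}(x)$ (and eventually replacing it by a global constant $\bar{w}$ via the polynomial drift margin), the condition $\beta > 1 + \xi_w - \eta$ is exactly what ensures that the error $M_W(b|x|) w^{-\beta} \asymp \hat{w}^{\xi_w}(x) w^{-\beta}$ is of smaller order than the leading $w^{-1}$ decrement, yielding $\tilde{P} V \le V - \delta_V V^{(\beta-1)/\beta}$.

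For the analogue of Lemma \ref{lem:drift-w-bounded} (bounded $w$, large $|x|$), the fixed annulus $D_x = \{1/c < \pi(x+z)/\pi(x) < c\}$ is replaced by the $x$-dependent annulus $D_x = \{1/c(x) \le \pi(x+z)/\pi(x) \le c(x)\}$ supplied in the hypothesis. Following \eqref{eq:bounded-d-a}--\eqref{eq:bounded-a-r}, each term picks up factors of the form $c^{\pm\eta}(x)$ or $c^{-(\gamma-\eta)}(x)$ multiplied by $M_W(b|x|)$, and the application of Lemma \ref{lem:acc-prob}\eqref{eq:acc-prob2} produces a $(\bar{w}/c(x))^{\alpha'}$ factor. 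Condition \eqref{eq:M-vanish} is exactly what forces all these pieces to vanish as $|x|\to\infty$, and \eqref{eq:c-bigger-w} gives $\hat{w}(x) \ll c(x)$ so that the Lemma \ref{lem:acc-prob}\eqref{eq:acc-prob2} bound is effective for every $w \in (0,\bar{w}]$. The geometric drift factor $1 - \delta'$ is recovered because $\liminf_{|x|\to\infty} q(\bar{A}_x) \ge \delta > 0$ by Lemma \ref{lem:containment-and-delta-acc}\eqref{item:c-acc}; the subexponential growth requirement on $c(x)$ in \eqref{eq:c-bigger-w} ensures this lemma still applies as $c(x)\to\infty$. The analogue of Lemma \ref{lem:bounded-in-compacts} then goes through unchanged, the only adjustments being that $M_W$ is replaced by $M_W(M)$ on the compact region and the small-$w$ drift uses the $A_x^\delta$ argument as before with Lemma \ref{lem:acc-prob}\eqref{eq:acc-prob2}. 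Combining the three regimes with $\delta_V \defeq \min\{\delta_V', 1-\lambda, 1-\lambda'\}$ yields \eqref{eq:def-drift-condition}.

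The main obstacle is the bookkeeping of exponents: the constraints $\eta \in (0, \alpha' \wedge (\beta'-1-\xi_w) \wedge (1-\xi_\pi))$, $\alpha \in (\eta,\alpha']$, $\beta \in (1+\xi_w-\eta, \beta'-\eta)$, $\eta \le (\beta'-\beta) \wedge 1 - \xi_\pi$, and $\xi_c \in (0, [(\beta'-\beta) \wedge \alpha \wedge 1] - \eta - \xi_\pi)$ each enforces a specific inequality at a specific step of the above argument (respectively: the use of the negative-moment bound in Lemma \ref{lem:acc-prob}\eqref{eq:acc-prob2}; domination of the $\hat{w}^{\xi_w}$ error in the large-$w$ regime; use of \eqref{eq:g-pi-balance} in the $A_{x,w}\cap R_x$ term; and the annulus bounds in the large-$|x|$ regime). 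The non-triviality lies in verifying that all these choices are simultaneously compatible with the assumptions, and that no additional hypothesis beyond Condition \ref{cond:w-moments-nonuniform} and \eqref{eq:c-bigger-w}--\eqref{eq:M-vanish} is needed; once this is checked, the arguments of Lemmas \ref{lem:drift-w-large}--\ref{lem:bounded-in-compacts} carry over with the stated quantitative dependence of $b$, $\delta_V$, $\mathsf{C}$ on the enumerated data.
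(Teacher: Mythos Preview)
Your proposal is correct and follows essentially the same approach as the paper: the paper packages the three regimes into two lemmas, Lemma~\ref{lem:drift-w-large-nonuniform} (polynomial drift for $w\ge \bar w(x)=c_w\hat w(x)$, with an $x$-dependent threshold) and Lemma~\ref{lem:drift-w-bounded-nonuniform} (geometric drift for large $|x|$ with $w\in(\underline w,\bar w(x)]$, for small $w$, and a crude bound elsewhere), then sets $\bar w\defeq\sup_{|x|\le M}\bar w(x)$ to pass to a constant threshold on $\mathsf C$. Your identification of which exponent constraint is used at which step, and of the roles of \eqref{eq:g-pi-balance}, \eqref{eq:M-w-balance}, \eqref{eq:c-bigger-w} and \eqref{eq:M-vanish}, matches the paper's Lemmas~\ref{lem:drift-w-large-nonuniform}--\ref{lem:drift-w-bounded-nonuniform} closely.
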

%

\begin{pf} 
The proof follows by applying Lemma~\ref{lem:drift-w-large-nonuniform}
below and
then Lem\-ma~\ref{lem:drift-w-bounded-nonuniform} with $c_w$ from
Lemma~\ref{lem:drift-w-large-nonuniform}, similarly to
the proof of Theorem~\ref{thm:rwm-bounded-moments}
by setting $\bar{w} \defeq\sup_{|x|\le M} \bar{w}(x)$,
and observing that $V$ is bounded on $C$.
The dependence on the various quantities is clear from the proofs of
Lemmas \ref{lem:drift-w-large-nonuniform} and
\ref{lem:drift-w-bounded-nonuniform}.
\end{pf}

Before proving Lemmas \ref{lem:drift-w-large-nonuniform} and
\ref{lem:drift-w-bounded-nonuniform}, we give
sufficient conditions to establish the conditions of
Theorem~\ref{thm:rwm-unbounded-moments}.

%
\begin{condition}
\label{cond:strong-super-exp-regular} 
Suppose Condition
\ref{a:super-exp-regular} holds and additionally there exists a
constant $\rho>1$ such that
\[
\lim_{|x|\to\infty} \frac{x}{|x|^{\rho}} \cdot\nabla \log\pi(x) = -
\infty.
\]
Moreover, the increment proposal density $q$ satisfies
$q(x)\le\bar{q}(|x|)$ for some bounded differentiable
nonincreasing function $\bar{q}\dvtx[0,\infty)\to[0,\infty)$
such that $\int_\mathsf{X} \bar{q}(|x|) \,\ud x<\infty$.
\end{condition}
%

%
\begin{corollary}
\label{cor:strongly-super-exp} 
Suppose Condition \ref{cond:strong-super-exp-regular} is satisfied,
and that
%
\begin{equation}
\int u^{-\alpha'}\vee u^{\beta'} Q_x(\ud u) \le c \bigl(1
\vee|x| \bigr)^{\rho'} \label{eq:simple-moment-bound}
\end{equation}
with some constants $c<\infty$ and $\rho'\in[0,\rho-1)$.
Then, for any
\[
\eta\in \bigl(0,\alpha'\wedge\bigl(\beta'-1\bigr)
\wedge1 \bigr), \qquad\alpha\in\bigl(\eta,\alpha'\bigr], \beta\in\bigl(1-\eta,
\beta'-\eta\bigr)
\]
and $V$ defined in \eqref{eq:def-drift-function},
the drift inequality \eqref{eq:def-drift-condition} holds,
with constants $\bar{w}, M,b\in[1,\infty)$,
$\underline{w}\in(0,1]$, and $\delta_V>0$ only depending on the
marginal algorithm and $\alpha',\beta',c,\rho'$ in
\eqref{eq:simple-moment-bound} and the chosen $\alpha,\beta$, and
$\eta$.
\end{corollary}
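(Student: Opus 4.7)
The plan is to deduce Corollary \ref{cor:strongly-super-exp} from Theorem \ref{thm:rwm-unbounded-moments} by an explicit choice of the auxiliary objects $\hat{w}$, $\psi$, $c$ and the exponents $\xi_w,\xi_\pi,\xi_c$, followed by verification of Condition \ref{cond:w-moments-nonuniform} and the tail bounds \eqref{eq:c-bigger-w}--\eqref{eq:M-vanish}. A preliminary step is to extract quantitative asymptotics from Condition \ref{cond:strong-super-exp-regular}: integrating $\frac{x}{|x|}\cdot\nabla\log\pi(x)$ along radial rays and using $\frac{x}{|x|^{\rho}}\cdot\nabla\log\pi(x)\to-\infty$ yields $-\log\pi(x)\ge c_0|x|^{\rho}$ and $|\nabla\log\pi(x)|\ge c_1|x|^{\rho-1}$ for $|x|$ large, while regularity of contours ensures that $R(t)\defeq\sup\{|x|\given\pi(x)\ge t\}$ satisfies $R(\pi(x))\le C|x|$ for $|x|$ large.

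Next, I set $\xi_w=\xi_\pi=\xi$ for a small $\xi>0$ to be chosen, $\hat{w}(x)\defeq(1\vee|x|)^{\rho'/\xi}$, $c(x)\defeq(1\vee|x|)^{\gamma}$ for a large $\gamma$, and $\psi(t)\defeq c(1\vee R(t))^{\rho'}$ with $c$ the constant from \eqref{eq:simple-moment-bound}. Then $g(x)=\psi(\pi(x))$ satisfies $c(1\vee|x|)^{\rho'}\le g(x)\lesssim(1\vee|x|)^{\rho'}$, which together with \eqref{eq:simple-moment-bound} gives Condition \ref{cond:w-moments-nonuniform}\eqref{item:g-moment-bound}, and part \eqref{item:m-what-bound} is immediate from $M_W(b(|x|\vee1))\le c(b(|x|\vee1))^{\rho'}\lesssim\hat{w}^{\xi}(x)$. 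For part \eqref{item:g-pi-w-bound}, writing $u=-\log\pi(x)$ and $v=-\log\pi(x+z)$, on $R_x$ we have $v\ge u$ and $(\pi(x+z)/\pi(x))^{\xi_\pi}g(x+z)/g(x)\le e^{-\xi(v-u)}(v/u)^{\rho'/\rho}\le e^{-(\xi/2)(v-u)}\le 1$ for $u$ large (via $(1+s)^{\rho'/\rho}\le e^{(\rho'/\rho)s}$ and $u\gtrsim|x|^{\rho}$), while $g(x)/\hat{w}^{\xi_\pi}(x)\lesssim 1$.

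The bounds \eqref{eq:c-bigger-w} and \eqref{eq:M-vanish} then reduce to polynomial comparisons: $\hat{w}^{\xi_\pi}(x)/c^{\xi_c}(x)\asymp|x|^{\rho'-\gamma\xi_c}\to 0$ for $\gamma>\rho'/\xi_c$, $M_W(b|x|)c^{-\eta}(x)\lesssim|x|^{\rho'-\gamma\eta}\to 0$, and $M_W(b|x|)(\hat{w}(x)/c(x))^{\alpha'}\to 0$ all hold for $\gamma$ sufficiently large. The main obstacle, and the key technical step, is controlling $M_W(b|x|)q(D_x)$ where $D_x=\{z\given 1/c(x)\le \pi(x+z)/\pi(x)\le c(x)\}$. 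The argument I have in mind is geometric: by regularity of the contours and the quantified super-exponential decay, the pre-image $x+D_x$ is a thin shell around the level set $\{\pi=\pi(x)\}$ of thickness of order $\log c(x)/|\nabla\log\pi(x)|\lesssim \log|x|/|x|^{\rho-1}$; translating by $-x$ and bounding the $q$-mass of the corresponding slab through origin using $q\le\bar q(|\cdot|)$ (bounded and radially integrable, so that cross-sectional integrals are finite), one gets $q(D_x)\lesssim \log|x|/|x|^{\rho-1}$, whence $M_W(b|x|)q(D_x)\lesssim|x|^{\rho'-\rho+1}\log|x|\to 0$ thanks to the crucial assumption $\rho'<\rho-1$.

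Finally, I would verify that, for any $\eta,\alpha,\beta$ in the ranges stated in the corollary, all parameter constraints of Theorem \ref{thm:rwm-unbounded-moments}, namely the admissible intervals for $\xi_w,\xi_\pi,\xi_c$ together with the inequalities $\eta<\alpha'\wedge(\beta'-1-\xi_w)\wedge(1-\xi_\pi)$, $\beta\in(1+\xi_w-\eta,\beta'-\eta)$ and $\eta\le(\beta'-\beta)\wedge 1-\xi_\pi$, can be met simultaneously by taking $\xi$ small enough and $\gamma$ large enough; an application of the theorem then delivers the polynomial drift \eqref{eq:def-drift-condition} with the stated dependence of the constants on the marginal algorithm and on $\alpha',\beta',c,\rho',\alpha,\beta,\eta$.
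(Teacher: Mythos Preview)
Your overall strategy---choose $\xi_w,\xi_\pi$ small, define $\hat w$, $\psi$ and $c$ explicitly, and verify the hypotheses of Theorem \ref{thm:rwm-unbounded-moments}---is exactly the paper's. One cosmetic difference: the paper takes $c(x)=\exp(|x|^{\epsilon_c})$ with $\epsilon_c\in(0,\rho-1-\rho')$, whereas you take $c(x)=(1\vee|x|)^{\gamma}$. Both satisfy $c(x)e^{-|x|}\to0$ and both lead to $q(D_x)\lesssim \log c(x)/|x|^{\rho-1}$ (this is Lemma \ref{lem:q-d-x} in the paper, which you essentially re-derive); your polynomial choice actually gives a slightly smaller $q(D_x)$ bound and works just as well once $\gamma$ is taken large enough to meet the remaining constraints.

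There is, however, a genuine gap in your verification of Condition \ref{cond:w-moments-nonuniform}\eqref{item:g-pi-w-bound}. You assert $g(x+z)/g(x)\le (v/u)^{\rho'/\rho}$ with $u=-\log\pi(x)$, $v=-\log\pi(x+z)$. From $-\log\pi(y)\ge c_0|y|^{\rho}$ you get $R(t)\le C(-\log t)^{1/\rho}$, hence $g(x+z)\lesssim v^{\rho'/\rho}$; but for the denominator you only have $g(x)\gtrsim |x|^{\rho'}$, so your inequality would require $u\lesssim |x|^{\rho}$, i.e.\ an \emph{upper} bound on $-\log\pi(x)$ in terms of $|x|^{\rho}$. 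Condition \ref{cond:strong-super-exp-regular} gives no such bound: e.g.\ $\pi(x)\propto e^{-|x|^{2\rho}}$ satisfies it, yet then $u^{\rho'/\rho}/|x|^{\rho'}=|x|^{\rho'}\to\infty$ and your chain of inequalities breaks.

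The fix is to avoid $u,v$ altogether and parametrise by level sets. Since $g=\psi\circ\pi$ depends on $\pi(\cdot)$ only, and by Lemma \ref{lem:annulus-containment} your $R(\pi(y))$ is comparable (within a fixed factor $b$) to the ray parameter $r_y$ defined by $\pi(r_y\,\mathbf{u})=\pi(y)$ along any fixed unit direction $\mathbf{u}$, you have $g(x+z)/g(x)\asymp(r_{x+z}/r_x)^{\rho'}$. The bracket in \eqref{eq:g-pi-balance} then becomes, up to constants, $(\pi(r_2\mathbf{u})/\pi(r_1\mathbf{u}))^{\xi_\pi}(r_2/r_1)^{\rho'}$ with $r_2\ge r_1$, and the super-exponential decay along the ray gives $\log(\pi(r_2\mathbf{u})/\pi(r_1\mathbf{u}))\le -c'(r_2^{\rho}-r_1^{\rho})$ for large $r_1$, from which boundedness follows easily. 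This is precisely how the paper handles this point (its $\hat\psi$ is defined directly via the ray, making the reduction immediate).
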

%

\begin{pf} 
Choose the constants $\xi_w$ and $\xi_\pi$ sufficiently small so
that the conditions on $\eta$, $\alpha$, and $\beta$ in Theorem~\ref{thm:rwm-unbounded-moments} are satisfied.

Fix a unit vector $u\in\R^d$, and define the function
$\hat{\psi}\dvtx\R_+\to[1,\infty)$ such that
\[
\hat{\psi}\bigl(\pi(ru)\bigr) = \cases{r,&\quad $r\ge R_0,$ \vspace*{2pt}
\cr
R_0,& \quad $r\in[0,R_0)$,}
\]
where $R_0\in[1,\infty)$; this is always possible
because the
function $r\mapsto\pi(ru)$ is bounded away from zero on compact sets
and monotone decreasing on the tail.

Define then $g(x) = c_g\hat{\psi}^{\rho'}(\pi(x))$, where the value of
the constant $c_g\ge1$ will be fixed later.
In order to guarantee that
Condition \ref{cond:w-moments-nonuniform}(i) is satisfied for sufficiently large $c_g$,
it is sufficient to show that
%
\begin{equation}
\limsup_{|x|\to\infty} g^{-1}(x) |x|^{\rho'}<\infty.
\label{eq:g-dominates-moments}
\end{equation}
Due to Lemma~\ref{lem:annulus-containment}
in Appendix \ref{171717171717-lemmas}, if $|x|$ is sufficiently
large, then $g(x) = g(\zeta_x|x|u)$ for some $\zeta_x\in[b^{-1},b]$, where
$b\in[1,\infty)$ is a constant. Therefore,
$g^{-1}(x) \le(b^{-1}|x|)^{-\rho'}$,
implying \eqref{eq:g-dominates-moments}.

Define then $\hat{w}(x) \defeq g^{\zeta_w}(x)$, where
$\zeta_w = \xi_\pi^{-1}\vee\xi_w^{-1}\in(1,\infty)$.
It is easy to check similarly to \eqref{eq:g-dominates-moments} that
\[
\sup_{x\in\mathsf{X}} \frac{g(x)}{\hat{w}^{\xi_\pi}(x)} + \frac{M_W (b(|x|\vee1) )}{\hat{w}^{\xi_w}(x)} \le1 +
\sup_{x\in\mathsf{X}} \frac{c' (b|x|)^{\rho'}}{\hat{w}^{\xi_w}(x)}< \infty.
\]
It is also easy to check that
\[
\sup_{z\in R_x} \biggl[ \biggl(\frac{\pi(x+z)}{\pi(x)}
\biggr)^{\xi_\pi} \frac{g(x+z)}{g(x)} \biggr] = \sup_{z\in R_x}
\biggl[ \biggl(\frac{\pi(x+z)}{\pi(x)} \biggr)^{\xi_\pi} \biggl(
\frac{\hat{\psi}(\pi(x+z))}{\hat{\psi}(\pi(x))} \biggr)^{\rho'} \biggr]
\]
is uniformly bounded in $x\in\mathsf{X}$. This is because it is
sufficient to check the condition in
the tails along a ray, that is, only for
$z=r|x|$, $r\ge1$.
We conclude about the existence of a constant $c_g\in[1,\infty)$ such that
Condition \ref{cond:w-moments-nonuniform} holds.

Choose $\varepsilon_c\in(0,\rho-1-\rho')$, and let $c(x) =
\exp(|x|^{\varepsilon_c})$.
It is easy to check that there exists $\xi_c$ such that
\eqref{eq:c-bigger-w} and \eqref{eq:M-vanish} hold, using
Lemma~\ref{lem:q-d-x} in Appendix \ref{171717171717-lemmas} to
estimate $q(D_x)$.\vspace*{-1pt}
\end{pf}
%

We start by establishing a polynomial drift when $w$ is
large.

%
\begin{lemma}
\label{lem:drift-w-large-nonuniform} 
Suppose the conditions of Theorem~\ref{thm:rwm-unbounded-moments} hold.
Then there exist constants $c_w\in[1,\infty)$ and $\delta_V>0$ such that
letting $\bar{w}(x)\defeq c_w \hat{w}(x)$,
\[
\tilde{P} V(x,w) \le V(x,w) - \delta_V V^{{(\beta-1)}/{\beta}}(x,w)
\qquad\hspace*{-1pt}\mbox{for all $x\in\R^d$ and $w\in\bigl[\bar{w}(x),\infty\bigr)$.}
\]
\end{lemma}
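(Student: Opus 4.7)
My plan is to follow closely the structure of the proof of Lemma \ref{lem:drift-w-large} in the uniform case, decomposing $\tilde{P}V(x,w)/V(x,w)$ as
\[
  \iint_{A_{x,w}} a_{x,w}(z,u) Q_{x+z}(\ud u) q(\ud z) + \iint_{R_{x,w}} b_{x,w}(z,u) Q_{x+z}(\ud u) q(\ud z),
\]
with $a_{x,w}$ and $b_{x,w}$ as in \eqref{eq:a-w-large}--\eqref{eq:b-w-large}, then further splitting each integral according to whether $z\in A_x$ or $z\in R_x$. The main drift contribution of order $-\nu/w$ again comes from the second integrand written as $1-r(x,x+z)u/w$, using $\min\{1,ab\}\ge\min\{1,a\}\min\{1,b\}$, Lemma \ref{lem:acc-prob}\eqref{eq:acc-prob1} with $\hat\beta=\beta'$, and the $\delta$-acceptance bound of Lemma \ref{lem:containment-and-delta-acc}\eqref{item:delta-acc}; the correction $g(x+z)/w^{\beta'-1}$ appearing there will be controlled just as the error terms below.

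The key substitution compared to the uniform case replaces the constant $M_W$ by $g(x+z)$ whenever a moment $\int u^\gamma Q_{x+z}(\ud u)$ appears, via Condition \ref{cond:w-moments-nonuniform}\eqref{item:g-moment-bound}, provided $\gamma\in[-\alpha',\beta']$; the admissible range of exponents is precisely what is guaranteed by the assumed bounds $\eta\le(\beta'-\beta)\wedge 1-\xi_\pi$ and $\alpha\in(\eta,\alpha']$. To translate $g(x+z)$ into quantities controlled by $\hat{w}(x)$ and $w$ I use the two available dominations, chosen according to the sub-region of integration. On $R_x$, the balance \eqref{eq:g-pi-balance} gives $g(x+z)\le C\hat{w}^{\xi_\pi}(x)\bigl(\pi(x)/\pi(x+z)\bigr)^{\xi_\pi}$; combined with the indicator $\charfun{u\ge w\pi(x)/\pi(x+z)}$ on $A_{x,w}\cap R_x$ (or the complementary indicator $\charfun{u<w\pi(x)/\pi(x+z)}$ on $R_{x,w}\cap R_x$, via $\charfun{X<1}\le X^{-\zeta}$ with $\zeta=1-\eta$) the offending $\pi$-ratios cancel and the contribution is bounded by a multiple of $\hat{w}^{\xi_\pi}(x)/w^{\beta+\eta+\xi_\pi}$. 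On $A_x$, the monotonicity of $\psi$ gives $g(x+z)\le g(x)$; moreover super-exponential decay of $\pi$ confines $x+z$ to a ball of radius $b|x|$ by Lemma \ref{lem:annulus-containment}, so the stronger uniform moment bound $M_W(b|x|)\le C\hat{w}^{\xi_w}(x)$ from \eqref{eq:M-w-balance} applies and yields a bound of order $\hat{w}^{\xi_w}(x)/w^{\beta+\eta}$. The same trick $1\le(\pi(x+z)u/(\pi(x)w))^\eta$ on $A_{x,w}$ used in Lemma \ref{lem:drift-w-large} is still needed to absorb the remaining $(\pi(x)/\pi(x+z))^\eta$ on $A_x$, producing the extra $w^{-\eta}$ factor.

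The assembly is then routine. Substituting $w\ge c_w\hat{w}(x)$ into each error bound turns $\hat{w}^{\xi_\pi}(x)$ (resp.\ $\hat{w}^{\xi_w}(x)$) into $w^{\xi_\pi}/c_w^{\xi_\pi}$ (resp.\ $w^{\xi_w}/c_w^{\xi_w}$), leaving factors of order $c_w^{-\xi}w^{-(\beta+\eta-\xi)}$; the assumption $\beta>1+\xi_w-\eta$ gives $\beta+\eta-\xi_w>1$, and together with $\eta+\beta+\xi_\pi\le\beta'$ and $\xi_\pi<\beta'-1-\xi_w$ every error piece is $o(1/w)$, while the main drift term $-\nu/w$ survives an analogous absorption of its own $g(x+z)/w^{\beta'-1}$ correction. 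Choosing $c_w$ large enough that the sum of all error pieces is at most, say, $\nu/(2w)$, I obtain $\tilde{P}V(x,w)/V(x,w)\le 1-\nu/(2w)$. Multiplying by $V(x,w)$ and using $V^{1/\beta}(x,w)\ge c_\pi^{\eta/\beta}w\ge w$ for $w\ge 1$ converts the $-V/w$ bound into the stated $-\delta_V V^{(\beta-1)/\beta}$ drift, exactly as at the end of Lemma \ref{lem:drift-w-large}. The main obstacle is the careful case analysis of the four sub-regions of the two decomposed integrals: in each case, the indicator-power trick, the choice between $g$-$\pi$ balance and the ball-confinement argument, and the selection of the exponent in the moment bound must be aligned so that every surviving factor is cleanly absorbed by the hierarchy $w\ge c_w\hat{w}(x)$, and this is exactly where all exponent constraints of the theorem come into play.
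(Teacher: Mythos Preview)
Your approach is essentially that of the paper: the same four-way split over $A_{x,w}/R_{x,w}$ and $A_x/R_x$, the same use of the containment lemma and $M_W\big(b(|x|\vee 1)\big)\le C\hat w^{\xi_w}(x)$ on $A_x$, the same use of the balance \eqref{eq:g-pi-balance} on $R_x$, and the same extraction of the $-\nu/w$ drift via Lemma~\ref{lem:acc-prob}\eqref{eq:acc-prob1} and Lemma~\ref{lem:containment-and-delta-acc}\eqref{item:delta-acc}.

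One genuine slip, however: on $R_{x,w}\cap R_x$ you write $\charfun{X<1}\le X^{-\zeta}$ with $\zeta=1-\eta$. That choice cancels the $(\pi(x+z)/\pi(x))^{1-\eta}$ in $b_{x,w}$ exactly and leaves \emph{no} residual positive power of $\pi(x+z)/\pi(x)$ to kill the $(\pi(x)/\pi(x+z))^{\xi_\pi}$ coming from the balance bound on $g(x+z)$; the integral $\int_{R_x}(\pi(x)/\pi(x+z))^{\xi_\pi}q(\ud z)$ is in general unbounded under Condition~\ref{a:super-exp-regular}. The paper takes $\zeta=1-\gamma$ with $\gamma\in(\eta+\xi_\pi,(\beta'-\beta)\wedge 1]$, so that a factor $(\pi(x+z)/\pi(x))^{\gamma-\eta}$ with exponent $\ge\xi_\pi$ remains and combines cleanly with \eqref{eq:g-pi-balance}. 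Your own stated bound $\hat w^{\xi_\pi}(x)/w^{\beta+\eta+\xi_\pi}$ is precisely what one gets with $\gamma=\eta+\xi_\pi$, so the error is in the quoted $\zeta$, not in your understanding of the mechanism. The same remark applies to the $A_{x,w}\cap R_x$ case: the indicator exponent there must be at least $\eta+\xi_\pi$, not $\eta$.

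A minor point: the ball-containment on $A_x$ is Lemma~\ref{lem:containment-and-delta-acc}\eqref{item:containment} (with $\nu=1$), not Lemma~\ref{lem:annulus-containment}; the latter needs $|x|\ge M$ and gives a two-sided annulus. Your observation $g(x+z)\le g(x)$ on $A_x$ is correct but not actually used---the paper goes straight to $M_W\big(b_1(|x|\vee 1)\big)$.
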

%

\begin{pf} 
We may write
\begin{eqnarray*}
\frac{\tilde{P} V(x,w)}{V(x,w)} &=& \iint_{A_{x,w}} a_{x,w}(z,u)
Q_{x+z}(\ud u)q(\ud z)
\\
&&{}+ \iint_{R_{x,w}} b_{x,w}(z,u)
Q_{x+z}(\ud u)q(\ud z),
\end{eqnarray*}
where $a_{x,w}$ and $b_{x,w}$ are defined in \eqref{eq:a-w-large} and
\eqref{eq:b-w-large}, respectively.

In what follows, for any $\nu>0$, we will denote by
$b_\nu\in(0,\infty)$ a constant
chosen so that for all $x\in\mathsf{X}$,
$ \{x+z\given\frac{\pi(x+z)}{\pi(x)}\ge\nu \}\subset
B (0,b_\nu(|x|\vee1) )$;
see Lemma~\ref{lem:containment-and-delta-acc}(i)
in Appendix \ref{171717171717-lemmas}. We also denote by $c\in[1,\infty)$
a constant whose value may change upon each appearance.

For the first integral, note that on $A_{x,w}$,
$1\le (\frac{\pi(x+z)}{\pi(x)}\frac{u}{w} )^{\eta}$, so
denoting $\delta\defeq\eta+\beta-1-\xi_w>0$, we have
for $w\ge\hat{w}(x)$,
\begin{eqnarray*}
&&\iint_{A_{x,w}\cap A_x} a_{x,w}(z,u) Q_{x+z}(\ud u)q(\ud z) \\
&&\qquad
\le\iint_{A_{x,w}\cap A_x} \frac{u^{\eta-\alpha}\vee
u^{\eta+\beta}}{w^{\eta+\beta}} Q_{x+z}(\ud u)q(\ud z)
\\
&&\qquad\le\frac{1}{w^{1+\delta}} \biggl(\frac{M_W (b_1(|x|\vee1) )}{\hat{w}^{\xi
_w}(x)} \biggr)\le
\frac{c}{w^{1+\delta}},
\end{eqnarray*}
by Condition \ref{cond:w-moments-nonuniform}(iii).
For the second one, let $\gamma\in(\eta+\xi_\pi,\beta'-\beta]$,
$\gamma<1$, and observe that $1\le
 (\frac{\pi(x+z)}{\pi(x)}\frac{u}{w} )^{\gamma}$ on
$A_{x,w}$, implying
that with $\delta'\defeq\gamma+\beta-1-\xi_\pi>0$
\begin{eqnarray*}
&&\iint_{A_{x,w}\cap R_x} a_{x,w}(z,u) Q_{x+z}(\ud u)q(\ud z)
\\
&&\qquad\le \int_{R_x} \biggl(\frac{\pi(x+z)}{\pi(x)}
\biggr)^{\gamma-\eta} \frac{u^{\gamma-\alpha} \vee u^{\gamma+\beta}}{w^{\gamma+\beta}} Q_{x+z}(\ud u) q(\ud z)
\\
&&\qquad\le\frac{1}{w^{1+\delta'}} \int_{R_x} \biggl[ \biggl(
\frac{\pi(x+z)}{\pi(x)} \biggr)^{\xi_\pi} \frac{g(x+z)}{g(x)} \biggr]
\frac{g(x)}{\hat{w}^{\xi_\pi}(x)} q(\ud z) \le\frac{c}{w^{1+\delta'}},
\end{eqnarray*}
whenever $w\ge\hat{w}(x)$, by Condition
\ref{cond:w-moments-nonuniform}(i) and
(ii).
Similarly, because\break
$ (\frac{\pi(x+z)}{\pi(x)}\frac{u}{w} )^{1-\gamma}\le1$ on
$R_{x,w}$ we have for $w\ge\hat{w}(x)$,
\begin{eqnarray*}
&&\iint_{R_{x,w}\cap R_x} \biggl(\frac{\pi(x+z)}{\pi(x)} \biggr)^{1-\eta}
\frac{u^{1-\alpha}\vee u^{1+\beta}}{w^{1+\beta}} Q_{x+z}(\ud u) q(\ud z)
\\
&&\qquad\le\frac{1}{w^{1+\delta'}} \int_{R_x} \biggl[ \biggl(
\frac{\pi(x+z)}{\pi(x)} \biggr)^{\xi_\pi} \frac{g(x+z)}{g(x)} \biggr]
\frac{g(x)}{\hat{w}^{\xi_\pi}(x)} q(\ud z) \\
&&\qquad\le\frac{c}{w^{1+\delta'}},
\end{eqnarray*}
and similarly, because
$ (\frac{\pi(x+z)}{\pi(x)}\frac{u}{w} )^{1-\eta}\le1$,
\begin{eqnarray*}
&&\iint_{R_{x,w}\cap A_x} \biggl(\frac{\pi(x+z)}{\pi(x)} \biggr)^{1-\eta}
\frac{u^{1-\alpha}\vee u^{1+\beta}}{w^{1+\beta}} Q_{x+z}(\ud u) q(\ud z)
\\
&&\qquad\le \frac{1}{w^{1+\delta}} \biggl(
\frac{M_W (b_1(|x|\vee1) )}{\hat{w}^{\xi_w}(x)} \biggr)\le \frac{c}{w^{1+\delta}}.
\end{eqnarray*}

As in the proof of Lemma~\ref{lem:drift-w-large}, we may apply
Lemma~\ref{lem:acc-prob}(i) to obtain
\begin{eqnarray*}
&&\iint_{R_{x,w}}  \biggl(1-\frac{\pi(x+z)}{\pi(x)}\frac{u}{w} \biggr)
Q_{x+z}(\ud u) q(\ud z)
\\
&&\qquad\le1 - \frac{\nu}{w}\int_{\{z\given({\pi(x+z)}/{\pi(x)})\ge
\nu\}} \biggl(1-
\frac{1}{w^{\beta'-1}} \int u^{\beta'} Q_{x+z}(\ud u) \biggr) q(\ud z)
\\
&&\qquad\le1 - \frac{\nu}{w}\int_{\{z\given({\pi(x+z)}/{\pi(x)})\ge
\nu\}}q(\ud z) \biggl(1-
\frac{1}{w^{\beta'-1-\xi_w}} \biggl(\frac{M_W (b_\nu(|x|\vee1) )}
{\hat{w}^{\xi
_w}(x)} \biggr) \biggr)
\\
&&\qquad\le1 - \frac{\nu}{w}\int_{\{z\given({\pi(x+z)}/{\pi(x)})\ge
\nu\}}q(\ud z) \biggl(1-
\frac{c}{w^{\beta'-1-\xi_w}} \biggr), 
\end{eqnarray*}
where we may choose $\nu\in(0,1)$
such that
$\inf_{x\in\mathsf{X}} q (z\given\frac{\pi(x+z)}{\pi(x)}\ge
\nu )>0$;
Lem\-ma~\ref{lem:containment-and-delta-acc}(ii)
ensures the existence of such a $\nu$.

The terms of the order $w^{-(1+\delta)}$ or $w^{-(1+\delta')}$
vanish faster than $w^{-1}$ as $w$ increases.
Consequently, we can choose $c_w\in[1,\infty)$ sufficiently large so
that there exists a $\nu'>0$ such that
for all $x\in\mathsf{X}$ and $w\ge\bar{w}(x)$,
\begin{eqnarray*}
\tilde{P}V(x,w) &\le& \biggl(1- \frac{\nu'}{w} \biggr)V(x,w)
\\
&=& V(x,w) - \delta_V V^{\kappa}(x,w) \bigl(c_\pi^\eta
\pi^{-\eta}(x) \bigr)^{1-\kappa} \le V(x,w) - \delta_V
V^{\kappa}(x,w),
\end{eqnarray*}
where $\kappa= \frac{\beta-1}{\beta}\in(0,1)$.
\end{pf}
%

Our last lemma concentrates on the cases where
either $|x|$ is large and $w$ bounded, or $w$ is small.

%
\begin{lemma}
\label{lem:drift-w-bounded-nonuniform} 
Assume the conditions of Theorem~\ref{thm:rwm-unbounded-moments} hold
and let $\bar{w}(x)\defeq c_w
\hat{w}(x)$ for some constant $c_w\in[1,\infty)$.
Then, there exist constants $\lambda\in(0,1)$, $\underline{w}\in(0,1)$,
$M\in[1,\infty)$, and $c_V\in[1,\infty)$ such that
%
\begin{eqnarray}
\tilde{P} V(x,w) &\le&\lambda V(x,w) \qquad
\mbox{for $|x|\ge M, w\in\bigl(\underline{w},
\bar{w}(x)\bigr]$}, \label{eq:drift-x-large}
\\
\tilde{P} V(x,w) &\le&\lambda V(x,w) \qquad \mbox{for $x\in\mathsf{X}, w\in(0,
\underline{w}]$}, \label{eq:drift-w-small}
\\
\tilde{P} V(x,w) &\le& c_V V(x,w) \qquad \mbox{for $(x,w)\in\mathsf{X}
\times\mathsf{W}$.} \label{eq:drift-crude}
\end{eqnarray}
\end{lemma}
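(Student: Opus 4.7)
The plan is to mirror the strategy of Lemma~\ref{lem:bounded-in-compacts}, starting from
\[
\frac{\tilde{P}V(x,w)}{V(x,w)} = 1 - \iint_{A_{x,w}} Q_{x+z}(\ud u)q(\ud z) + \tilde{a}_{x,w} + \tilde{b}_{x,w},
\]
with $\tilde{a}_{x,w}$ and $\tilde{b}_{x,w}$ as in \eqref{eq:a-def-x-large}--\eqref{eq:b-def-x-large}. The pointwise bounds $(\pi(x)/\pi(x+z))^\eta \le (u/w)^\eta$ on $A_{x,w}$ and $(\pi(x+z)/\pi(x))^{1-\eta}(u/w)^{1-\eta}\le 1$ on $R_{x,w}$, combined with Condition~\ref{cond:w-moments-nonuniform}\eqref{item:g-moment-bound}, yield
\[
\tilde{a}_{x,w}+\tilde{b}_{x,w} \le \frac{1}{w^{\eta-\alpha}\vee w^{\eta+\beta}}\int q(\ud z)\,g(x+z).
\]
Splitting $\mathsf{X}=A_x\cup R_x$ and using \eqref{eq:g-pi-balance} on $R_x$ together with the local boundedness of $g$ on compact subsets of $A_x$ (a consequence of the local boundedness of $\hat{w}$ and continuity of $\pi$) establishes \eqref{eq:drift-crude} in the form $\tilde{P}V\le c_V V$.

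For \eqref{eq:drift-x-large}, I would use the $c(x)$-dependent partition from Lemma~\ref{lem:drift-w-large-nonuniform}: $\bar{A}_x\defeq\{z:\pi(x+z)/\pi(x)\ge c(x)\}$, $\bar{R}_x\defeq\{z:\pi(x+z)/\pi(x)\le 1/c(x)\}$, and the annulus $D_x$. The contributions from $D_x$, $\bar{R}_x$ and $\bar{A}_x\cap R_{x,w}$ to $\tilde{a}_{x,w}+\tilde{b}_{x,w}$ are dominated, up to constants, by $M_W(b|x|)\max\{q(D_x),c(x)^{-\eta},c(x)^{-(\gamma-\eta)}\}$ for some admissible $\gamma\in(\eta+\xi_\pi,(\beta'-\beta)\wedge 1)$, all of which tend to zero by \eqref{eq:M-vanish} and \eqref{eq:c-bigger-w}. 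The contraction is then supplied by Lemma~\ref{lem:acc-prob}\eqref{eq:acc-prob2} with $\hat{\alpha}=\alpha'$:
\[
\iint_{A_{x,w}} Q_{x+z}(\ud u) q(\ud z) \ge q(\bar{A}_x)\Bigl(1 - c_w^{\alpha'}\bigl(\hat{w}(x)/c(x)\bigr)^{\alpha'}M_W(b|x|)\Bigr),
\]
whose bracketed factor converges to $1$ by \eqref{eq:M-vanish}; the prefactor $q(\bar{A}_x)$ stays bounded below as $|x|\to\infty$ by a growing-threshold version of Lemma~\ref{lem:containment-and-delta-acc}\eqref{item:delta-acc}, admissible because the subexponential growth of $c$ imposed in \eqref{eq:c-bigger-w} and Condition~\ref{a:super-exp-regular} together ensure that moving $x$ inward by a bounded increment raises $\log\pi$ by more than $\log c(x)$ for sufficiently large $|x|$.

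For \eqref{eq:drift-w-small}, I fix $\delta\in(0,1)$ so that $\inf_x q(A_x^\delta)\ge 2\epsilon>0$ with $A_x^\delta\defeq\{z:\pi(x+z)/\pi(x)\ge\delta\}$ by Lemma~\ref{lem:containment-and-delta-acc}\eqref{item:delta-acc}. Lemma~\ref{lem:acc-prob}\eqref{eq:acc-prob2} with $\hat{\alpha}=\alpha'$ then yields $\iint_{A_{x,w}}Q_{x+z}(\ud u)q(\ud z)\ge \epsilon$ provided $\underline{w}$ is small enough that $(\underline{w}/\delta)^{\alpha'}$ times the relevant essential supremum of $\int u^{-\alpha'}Q_{x+z}(\ud u)$ is at most $1/2$. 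The crude estimate above gives $\tilde{a}_{x,w}+\tilde{b}_{x,w}\le w^{\alpha-\eta}\int q(\ud z) g(x+z)$ for $w<1$, which decays as $\underline{w}^{\alpha-\eta}$ and can be made smaller than $\epsilon/2$ by further shrinking $\underline{w}$, yielding \eqref{eq:drift-w-small} with $\lambda=1-\epsilon/2$.

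The principal obstacle is the uniformity in $x\in\mathsf{X}$ required for \eqref{eq:drift-w-small}: since $M_W$ and $g$ may grow as $|x|\to\infty$, the suprema $\sup_x M_W(b|x|)$ and $\sup_x\int q(\ud z) g(x+z)$ need not be finite. The resolution is to exploit the $w^{-\alpha}$ factor in $V(x,w)$ at small $w$, rewriting
\[
\tilde{P}V(x,w)\le V(x,w)\bigl(1-\iint_{A_{x,w}}Q_{x+z}(\ud u)q(\ud z)\bigr) + c_\pi^\eta\pi^{-\eta}(x)\iint \min\{1,r(x,y)u/w\}(u^{-\alpha}\vee u^\beta)\,Q_y(\ud u)q(x,\ud y),
\]
and splitting the inner integral over $\{u\ge 1\}$ and $\{u<1\}$: on $\{u\ge 1\}$ the $u^\beta$ moment is handled by Condition~\ref{cond:w-moments-nonuniform}\eqref{item:g-moment-bound}, while on $\{u<1\}$ the truncation $\min\{1,ru/w\}\le ru/w$ restricted to $u<w/r$ contributes an additive $O(w^{1-\alpha})$ term, negligible against $V(x,w)$ which is of order $\pi^{-\eta}(x)w^{-\alpha}$; this recovers the needed uniformity with $\underline{w}$ depending only on $\epsilon$.
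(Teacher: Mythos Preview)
Your overall architecture matches the paper's, but there are two genuine gaps.

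\textbf{The $\bar{R}_x$ contribution.} You claim the contribution from $\bar{R}_x$ is dominated by $M_W(b|x|)\,c(x)^{-(\gamma-\eta)}$. This is incorrect: on $\bar{R}_x$ the point $x+z$ is unbounded (it is where $\pi$ is \emph{small}), so $\int u^{\gamma-\alpha}\vee u^{\gamma+\beta}\,Q_{x+z}(\ud u)$ is not controlled by $M_W(b|x|)$ but only by $g(x+z)$. The paper takes $\gamma=\eta+\xi_\pi+\xi_c$ and writes, on $\bar{R}_x$,
\[
\Big(\frac{\pi(x+z)}{\pi(x)}\Big)^{\gamma-\eta}g(x+z)
=\Big(\frac{\pi(x+z)}{\pi(x)}\Big)^{\xi_c}\cdot
\bigg[\Big(\frac{\pi(x+z)}{\pi(x)}\Big)^{\xi_\pi}\frac{g(x+z)}{g(x)}\bigg]\cdot g(x)
\le c(x)^{-\xi_c}\cdot C\cdot C'\,\hat{w}^{\xi_\pi}(x),
\]
invoking both halves of \eqref{eq:g-pi-balance}. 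The resulting bound $\hat{w}^{\xi_\pi}(x)/c^{\xi_c}(x)$ vanishes by \eqref{eq:c-bigger-w}. Your version naively transplants the uniform-moment argument of Lemma~\ref{lem:drift-w-bounded} and does not survive the non-uniform setting.

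\textbf{Uniformity in $x$ for \eqref{eq:drift-w-small} and \eqref{eq:drift-crude}.} You correctly identify the obstacle but your proposed resolution does not work: on $\{u\ge 1\}$ the bound $\int u^\beta Q_y(\ud u)\le g(y)$ still leaves you with $\int q(\ud z)\,g(x+z)$, which grows with $|x|$ and cannot be beaten by any fixed power of $\underline{w}$. Similarly, your route to \eqref{eq:drift-crude} via $\int q(\ud z)\,g(x+z)$ and \eqref{eq:g-pi-balance} fails because on $R_x$ one is left with $\int_{R_x} q(\ud z)\,(\pi(x)/\pi(x+z))^{\xi_\pi}$, which need not even be finite under Condition~\ref{a:super-exp-regular}.

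The paper avoids this entirely by a logical reordering. Observe that every denominator in the \eqref{eq:drift-x-large} estimates is of the form $w^{\eta-\alpha}\vee w^{\eta+\beta}$ (or with $\gamma$, or with $-\alpha,\beta$), and since the negative exponent is negative and the positive one positive, these are all $\ge 1$ for \emph{every} $w>0$. Hence the bounds proving \eqref{eq:drift-x-large} are in fact valid for all $w\in(0,\bar{w}(x)]$, not just $w>\underline{w}$. Combined with Lemma~\ref{lem:drift-w-large-nonuniform} for $w\ge\bar{w}(x)$, this already yields $\tilde{P}V\le\lambda V$ on the whole half-space $|x|\ge M$. Consequently both \eqref{eq:drift-w-small} and \eqref{eq:drift-crude} need only be checked on the compact $\{|x|\le M\}$, where $M_W(b_\delta M)$ is a fixed finite constant and your own $A_x^\delta$ argument (essentially that of Lemma~\ref{lem:bounded-in-compacts}) goes through without any uniformity issue.
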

%

\begin{pf} 
We may write
\begin{eqnarray*}
\frac{\tilde{P} V(x,w)}{V(x,w)} &=& 1 + \iint_{A_{x,w}} \hat{a}_{x,w}(z,u)
Q_{x+z}(\ud u) q(\ud z)
\\
& &{}+\iint_{R_{x,w}} \hat{b}_{x,w}(z,u)
Q_{x+z}(\ud u) q(\ud z),
\end{eqnarray*}
where $\hat{a}_{x,w}$ and $\hat{b}_{x,w}$ are given as in
\eqref{eq:a-def-x-large} and \eqref{eq:b-def-x-large}.

Define the subsets $\bar{A}_x \defeq
\{z\given
\frac{\pi(x+z)}{\pi(x)} \ge c(x) \}$, $\bar{R}_x \defeq\{z\given
\frac{\pi(x+z)}{\pi(x)} \le\frac{1}{c(x)} \}$ and
$D_x \defeq(\bar{A}_x\cup
\bar{R}_x)^\complement= \{z\given\frac{1}{c(x)} <
\frac{\pi(x+z)}{\pi(x)}< c(x)\}$. Lemma~\ref{lem:annulus-containment} in Appendix \ref{171717171717-lemmas}
implies the existence of $b_1\in[1,\infty)$ and $M_0\in[1,\infty)$
such that
$\bar{A}_x \cup D_x +x\subset B (0,b_1(|x|\vee1) )$ for all
$x\in\mathsf{X}$. We decompose the two sums above into sub-sums on
$\bar
{A}_x$ and $\bar{R}_x$,
with again an obvious abuse of notation.

Observe that
$1\le (\frac{\pi(x+z)}{\pi(x)} \frac{u}{w} )^{\eta}$ on $A_{x,w}$
and $ (\frac{\pi(x+z)}{\pi(x)}\frac{u}{w} )^{1-\eta}\le1$ on
$R_{x,w}$, implying
%
\begin{eqnarray} \label{eq:annulus-est-simple}
&&\iint_{D_x\cap A_{x,w}} \hat{a}_{x,w}(z,u) Q_{x+z}(\ud u) q(
\ud z)+ \iint_{D_x\cap R_{x,w}} \hat{b}_{x,w}(z,u) Q_{x+z}(\ud
u) q(\ud z)\nonumber
\\
&&\qquad\le\int_{D_x} \int \frac{u^{\eta-\alpha}\vee u^{\eta+\beta}}{w^{\eta-\alpha}\vee
w^{\eta+\beta}} Q_{x+z}(\ud
u)q(\ud z)
\\
&&\qquad\le\frac{M_W (b_1(|x|\vee1) ) q(D_x)}{w^{\eta-\alpha}\vee
w^{\eta+\beta}},
\nonumber
\end{eqnarray}
because $\eta\le(\beta'-\beta) \wedge\alpha$.

Let then $\gamma\defeq\eta+\xi_\pi+\xi_c < (\beta'-\beta)\wedge
\alpha
\wedge1$
and notice again that\break
$ (\frac{\pi(x+z)}{\pi(x)}\frac{u}{w} )^{1-\gamma}\le1$ on
$R_{x,w}$
and $ (\frac{\pi(x)}{\pi(x+z)}\frac{w}{u} )^{\gamma}\le1$ on
$A_{x,w}$. Therefore,
\begin{eqnarray*}
&&\iint_{\bar{R}_x\cap A_{x,w}} \hat{a}_{x,w}(z,u) Q_{x+z}(\ud u) q(
\ud z) + \iint_{\bar{R}_x\cap R_{x,w}} \hat{b}_{x,w}(z,u) Q_{x+z}(\ud
u) q(\ud z)
\\
&&\qquad \le\int_{\bar{R}_x} \biggl(\frac{\pi(x+z)}{\pi(x)}
\biggr)^{\gamma-\eta} \int \frac{u^{\gamma-\alpha} \vee
u^{\gamma+\beta}}{w^{\gamma-\alpha} \vee
w^{\gamma+\beta}} Q_{x+z}(\ud u) q(\ud z)
\\
&&\qquad \le\frac{1}{w^{\gamma-\alpha}\vee w^{\gamma+\beta}} \biggl(\frac{\hat{w}^{\xi_\pi}(x)}{c^{\xi_c}(x)} \biggr) \int
_{\bar{R}_x} \biggl[ \biggl(\frac{\pi(x+z)}{\pi(x)} \biggr)^{\xi_\pi}
\frac{g(x+z)}{g(x)} \biggr] \frac{g(x)}{\hat{w}^{\xi_\pi}(x)} q(\ud z),
\end{eqnarray*}
because $\frac{\pi(x+z)}{\pi(x)}\le c^{-1}(x)$ on $\bar{R}_x$.

It holds that $1\le
 (\frac{\pi(x)}{\pi(x+z)}\frac{w}{u} )$
on $R_{x,w}$, so we have
\begin{eqnarray*}
&&\int_{\bar{A}_x} \int_{(z,u)\in R_{x,w}}
\hat{b}_{x,w}(z,u) Q_{x+z}(\ud u) q(\ud z)
\\
&&\qquad\le\int_{\bar{A}_x} \biggl(\frac{\pi(x)}{\pi(x+z)}
\biggr)^{\eta} \int_{(z,u)\in R_{x,w}} \frac{u^{-\alpha}\vee
u^{\beta}}{w^{-\alpha}\vee w^{\beta}}
Q_{x+z}(\ud u) q(\ud z)
\\
&&\qquad\le\frac{M_W (b_1(|x|\vee1) ) c^{-\eta}(x)}{w^{-\alpha
}\vee
w^{\beta}}.
\end{eqnarray*}
Similarly,
\begin{eqnarray*}
&&\int_{\bar{A}_x} \int_{(z,u)\in A_{x,w}}
\hat{a}_{x,w}(z,u) Q_{x+z}(\ud u) q(\ud z)
\\
&&\qquad\le\frac{M_W (b_1(|x|\vee1) ) c^{-\eta}(x) }{w^{-\alpha
}\vee
w^{\beta}} - \int_{\bar{A}_x\cap A_{x,w}} Q_{x+z}(\ud u)
q(\ud z).
\end{eqnarray*}
Now, by Lemma~\ref{lem:acc-prob}(ii),
\begin{eqnarray*}
&&\int_{\bar{A}_x\cap A_{x,w}} Q_{x+z}(\ud u) q(\ud z)\\
&&\qquad\ge\int
\biggl(1 - \biggl(\frac{w}{c(x)} \biggr)^{\alpha'} \int u^{-\alpha'}
Q_{x+z}(\ud u) \biggr)q(\ud z)
\\
&&\qquad
\ge q(\bar{A}_x) \biggl[1- M_W \bigl(b_1\bigl(|x|
\vee1\bigr) \bigr) c_w^{\alpha'} \biggl(\frac{\hat{w}(x)}{c(x)}
\biggr)^{\alpha'} \biggr],
\end{eqnarray*}
for all $w\in(0,c_w \hat{w}(x)]$.

Lemma~\ref{lem:containment-and-delta-acc}(iii)
in Appendix \ref{171717171717-lemmas}
implies that
$\delta\defeq\liminf_{|x|\to\infty} q(\bar{A}_x)>0$.
Condition \ref{cond:w-moments-nonuniform} together
with \eqref{eq:c-bigger-w} and \eqref{eq:M-vanish}
imply
%
\begin{equation}
\limsup_{|x|\to\infty} \frac{\tilde{P} V(x,w)}{V(x,w)} \le1 - \delta,
\label{eq:drift-limit}
\end{equation}
and we may conclude \eqref{eq:drift-x-large},
by choosing any $\lambda\in(1-\delta,1)$
and finding a sufficiently large $M\in[1,\infty)$ such that the claim
holds.

Consider then \eqref{eq:drift-w-small} and assume $|x|\le M$.
It is easy to verify that \eqref{eq:drift-limit}
holds with some $\delta'>0$ when taking $\limsup_{w\to0+}$ in the
terms of the earlier decomposition.
Finally, it is easy to check that
\eqref{eq:drift-crude} holds for $|x|\le M$ similarly as
\eqref{eq:annulus-est-simple}, and the general case follows from
\eqref{eq:drift-x-large} and Lemma~\ref{lem:drift-w-large-nonuniform}.
\end{pf}
%



\section{Concluding remarks}
\label{sec:conclusion} 

Our convergence rate results in Sections~\ref{13131313131131311} and
\ref{151515151515}--\ref{171717171717} allow one to establish central limit
theorems. In the
case where the pseudo-marginal kernel is variance bounding, that is,
$\tilde{P}$ admits a spectral gap as discussed in Section~\ref
{13131313131131311}, the central limit theorem (CLT) holds for all
functions $f\dvtx\mathsf{X}\times\mathsf{W}\to\R$ such that
$\tilde{\pi}(f^2)<\infty$ \cite{roberts-rosenthal-variance-bounding},
Theorem~7. Specifically, we have for all
$g\dvtx\mathsf{X}\to\R$ with $\pi(g^2)<\infty$,
%
\begin{equation}
\frac{1}{\sqrt{n}} \sum_{k=0}^{n-1} \bigl[
g(\tilde{X}_k) - \pi(g) \bigr] \mathop{\longrightarrow}\limits^{n\to\infty} \mathcal{N}
\bigl(0,\var (g,\tilde {P}) \bigr)\qquad \mbox{in distribution}, \label{eq:clt}
\end{equation}
where $\var(g,\tilde{P})\in[0,\infty)$ is given in Definition~\ref{def:asvar}. It is possible to deduce upper bounds for the
asymptotic variance
$\var(g,\tilde{P})$. Namely,
Corollary~\ref{cor:autocorr-geom} relates $\var(g,\tilde{P})$ to
$\var(g,P)$, and from Lemma~\ref{lem:asvar-expressions},
\eqref{eq:asymptotic-variance},
\[
\var(g,P)\le \frac{1 + (1-\Gap(P))}{1-(1-\Gap(P))} \int e_{g-\pi(g),P}(\ud x) =
\frac{2-\Gap(P)}{\Gap(P)} \var_\pi(g),
\]
where $e_{g-\pi(g),P}$ is a positive measure on $[-1,1]$; see
Lemma~\ref{lem:asvar-expressions} in Appendix~\ref{sec:spectral}.
If the spectral gap of the marginal algorithm is not directly
accessible, it can be bounded by the drift constants; see
\cite{baxendale-bounds} and references therein, and also
\cite{latuszynski-miasojedow-niemiro}, Theorem~4.2(ii).

When $\tilde{P}$ is polynomially ergodic, the class of functions $g$
for which the CLT~\eqref{eq:clt} holds is related to the exponent in
the polynomial drift. For the convenience of the reader, we reformulate here
a result due to Jarner and Roberts~\cite{jarner-roberts}.

%
\begin{theorem}
\label{thm:petite-clt} 
Suppose $P$ is irreducible and aperiodic.
Assume
there exists
$V\dvtx\mathsf{X}\times\mathsf{W}\to[1,\infty)$, $\alpha\in[0,1)$,
$b\in[0,\infty)$, $c\in(0,\infty)$, a petite set
(e.g., \cite{jarner-hansen,meyn-tweedie})
$C\in\B(\mathsf{X})\times\B(\mathsf{W})$ such that
%
\begin{equation}
\tilde{P}V(x,w) \le V(x,w) - c V^\alpha(x,w) + b\mathbbm{I} \bigl\{
(x,w)\in C \bigr\}, \label{eq:polynomial-drift}
\end{equation}
and that there exists $\eta\in[1-\alpha,1]$ with
$\tilde{\pi}(V^{2\eta})<\infty$ and
\[
\sup_{(x,w)\in\mathsf{X}\times\mathsf{W}} \frac{|g(x)|}{V^{\alpha+\eta-1}(x,w)}<\infty,
\]
then $\var(g,\tilde{P})\in[0,\infty)$ and the CLT \eqref{eq:clt} holds.
\end{theorem}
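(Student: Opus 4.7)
The plan is to recognise the statement as an essentially direct transfer of the Jarner--Roberts polynomial ergodicity/CLT theorem to the pseudo-marginal setting by working on the extended space $(\mathsf{X}\times\mathsf{W},\tilde{\pi})$. The hypotheses have been arranged to match those of \cite{jarner-roberts} almost verbatim, so the task reduces to lifting the test function and checking the structural requirements of that reference.

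First I would lift $g$ to the extended space: set $\tilde{g}(x,w)\defeq g(x)$. The pointwise growth assumption on $g$ becomes $\sup_{(x,w)\in\mathsf{X}\times\mathsf{W}}|\tilde{g}(x,w)|/V^{\alpha+\eta-1}(x,w)<\infty$ in the form required by \cite{jarner-roberts}. Moreover, for the stationary chain $(\tilde{X}_k,\tilde{W}_k)_{k\ge 0}$ under $\tilde{P}$ we have $\tilde{g}(\tilde{X}_k,\tilde{W}_k)=g(\tilde{X}_k)$, so a CLT of the form \eqref{eq:clt} for $g$ is equivalent to the corresponding CLT for $\tilde{g}$.

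Second, I would check that $\tilde{P}$ meets the structural hypotheses in \cite{jarner-roberts}: it is $\tilde{\pi}$-reversible by its Metropolis--Hastings construction, and its $\tilde{\pi}$-irreducibility and aperiodicity are inherited from those of $P$, since the accepted moves of $\tilde{P}$ inherit the accessibility structure of $P$ through the proposal $q(x,\ud y)Q_y(\ud u)$ and the acceptance probability is strictly positive whenever $r(x,y)u/w>0$. Given these, the polynomial drift \eqref{eq:polynomial-drift} towards the petite set $C$ places us exactly in the situation of the Jarner--Roberts polynomial ergodicity theorem, which yields polynomial rates of convergence of $\tilde{P}^n(x,w;\uarg)$ to $\tilde{\pi}$ in a $V^\delta$-norm for suitable $\delta$.

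Third, the CLT portion of \cite{jarner-roberts} then asserts that for any $\eta\in[1-\alpha,1]$ with $\tilde{\pi}(V^{2\eta})<\infty$ and any measurable $f$ with $|f|\le c_f V^{\alpha+\eta-1}$, the asymptotic variance $\var(f,\tilde{P})$ is finite and the usual CLT holds. Applying this with $f=\tilde{g}$ gives both $\var(\tilde{g},\tilde{P})\in[0,\infty)$ and the convergence in distribution required for \eqref{eq:clt}, which is what was to be shown. The only non-routine step is the transfer of $\psi$-irreducibility and aperiodicity from $P$ to $\tilde{P}$, but this is standard for pseudo-marginal kernels and presents no real obstacle; the rest is bookkeeping against the Jarner--Roberts theorem on the product space.
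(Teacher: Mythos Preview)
Your proposal is correct and matches the paper's own justification: the paper simply notes that this is a restatement of \cite[Theorem 4.2]{jarner-roberts} applied on the product space, together with the observation that $\tilde{P}$ inherits irreducibility and aperiodicity from $P$. Your write-up is more detailed than the paper's one-line proof, but the approach is identical.
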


%
Theorem~\ref{thm:petite-clt} is a restatement
of \cite{jarner-roberts}, Theorem~4.2, because the pseudo-marginal kernel
$\tilde{P}$ is also irreducible and aperiodic if the marginal kernel
$P$ is. The asymptotic variance can also be upper bounded
in the polynomial case; see \cite{andrieu-fort-vihola-subgeom} and
\cite{latuszynski-miasojedow-niemiro}, Theorem~5.2(ii) and Remark~5.3.
It is also possible to deduce nonasymptotic
mean square error bounds \cite{latuszynski-miasojedow-niemiro}.

Finally some of our results apply directly to extensions of
pseudo-marginal algorithms which directly make use of noisy estimates
of the marginal's acceptance ratio
\cite{karagiannis-andrieu,nicholls2012coupled}. However, despite some
similitudes and simplifications, the corresponding processes differ
fundamentally in that $(X_k)_{k \ge0}$ is a Markov chain
in this case (as opposed to the pseudo-marginal scenario), and we are
currently investigating these differences.


\begin{appendix}\label{app}
\section{Lemmas for Section \texorpdfstring{\protect\ref{12121212121212121}}{2}}\label{sec:spectral} 

In this section, $(\mathsf{X},\mathcal{B}(\mathsf{X}))$ is a generic
measurable space, and $\mu$ is a probability measure on $\mathsf{X}$.
We consider the Hilbert space
\[
L_0^2(\mathsf{X},\mu) \defeq\bigl\{f\dvtx\mathsf{X}\to\R
\given\mu(f)=0, \mu\bigl(f^2\bigr)<\infty\bigr\},
\]
equipped with the inner product
$ \langle f, g  \rangle_\mu\defeq\int_\mathsf{X} f(x)
g(x)\mu(\ud x)$. We denote the corresponding norm by $\|f\|_\mu\defeq
 \langle f, f  \rangle_\mu^{1/2}$ and the
operator norm for $A\dvtx L_0^2(\mathsf{X},\mu)\to L_0^2(\mathsf
{X},\mu)$ as
$\|A\| \defeq\sup\{\|A f\|_\mu\given\|f\|_\mu=1\}$.

%
\begin{lemma}
\label{lem:operator-calculus} 
Let $P_1$ and $P_2$ be two Markov kernels on space $\mathsf{X}$
reversible with respect to $\mu$, and define the family of interpolated
kernels $H_\beta\defeq P_1 +
\beta(P_2-P_1)$ for $\beta\in[0,1]$ also reversible with respect to
$\mu$. Then
\[
A_\lambda(\beta) \defeq(I-\lambda H_\beta)^{-1} (I+
\lambda H_\beta) = I + 2 \sum_{k=1}^\infty
\lambda^k H_\beta^k
\]
is a well-defined operator
on $L_0^2(\mathsf{X},\mu)$ for all
$\lambda\in[0,1)$ and $\beta\in[0,1]$ as well as the right-hand
derivatives, with limits taken with respect to the operator norm
\begin{eqnarray*}
A'_\lambda(\beta) &\defeq&\lim_{h\to0+}
h^{-1} \bigl(A_\lambda(\beta+h)-A_\lambda(\beta) \bigr)
\\
&=& 2\lambda(I-\lambda H_\beta)^{-1} (P_2-P_1)
( I - \lambda H_{\beta})^{-1},
\\
A''_\lambda(\beta) &\defeq&\lim
_{h\to0+} h^{-1} \bigl(A'_\lambda(
\beta+h)-A'_\lambda(\beta) \bigr)
\\
&= &2\lambda(I - \lambda H_\beta)^{-1} (P_2-P_1)A'_\lambda(
\beta),
\end{eqnarray*}
for all $\lambda\in[0,1)$ and $\beta\in[0,1)$.
\end{lemma}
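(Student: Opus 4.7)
The plan is to treat everything as a resolvent calculation for the self-adjoint contraction $H_\beta$ on $L_0^2(\mathsf{X},\mu)$. The first step is to observe that, since $H_\beta=P_1+\beta(P_2-P_1)$ is a convex combination of Markov kernels reversible with respect to $\mu$, it is itself a $\mu$-reversible Markov kernel, so its action on $L_0^2(\mathsf{X},\mu)$ is self-adjoint with operator norm at most one (invariance of $\mu$ preserves $L_0^2$, and Jensen's inequality gives $\|H_\beta f\|_\mu\le\|f\|_\mu$). Consequently, for $\lambda\in[0,1)$, the Neumann series $R(\beta)\defeq\sum_{k=0}^\infty\lambda^k H_\beta^k$ converges in operator norm to $(I-\lambda H_\beta)^{-1}$, with the uniform bound $\|R(\beta)\|\le(1-\lambda)^{-1}$ independent of $\beta\in[0,1]$.

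Next I would verify the closed form for $A_\lambda(\beta)$. Because $R(\beta)$ commutes with $H_\beta$ and satisfies $R(\beta)-\lambda R(\beta)H_\beta=I$, a direct manipulation gives
\[
A_\lambda(\beta)=R(\beta)(I+\lambda H_\beta)=R(\beta)+\lambda R(\beta)H_\beta=2R(\beta)-I,
\]
and the series representation $I+2\sum_{k=1}^\infty\lambda^k H_\beta^k$ follows by expanding $R(\beta)$. Well-definedness on $L_0^2$ is then automatic from the first step.

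For the derivatives, I would use the standard resolvent identity: since $H_{\beta+h}-H_\beta=h(P_2-P_1)$,
\[
R(\beta+h)-R(\beta)=R(\beta+h)\bigl[(I-\lambda H_\beta)-(I-\lambda H_{\beta+h})\bigr]R(\beta)=\lambda h\,R(\beta+h)(P_2-P_1)R(\beta).
\]
The uniform bound $\|R(\beta)\|\le(1-\lambda)^{-1}$ implies that $\beta\mapsto R(\beta)$ is Lipschitz (hence continuous) in operator norm, so dividing by $h$ and letting $h\to0+$ yields
\[
R'(\beta)=\lambda\,R(\beta)(P_2-P_1)R(\beta)
\]
in operator norm. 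Since $A_\lambda(\beta)=2R(\beta)-I$, this gives the claimed formula for $A'_\lambda(\beta)$. Iterating the same argument, $R'(\beta+h)-R'(\beta)$ can be expressed in terms of $R(\beta+h)-R(\beta)$ via two applications of the resolvent identity, and passing to the limit using operator-norm continuity of $R$ and $R'$ produces
\[
A''_\lambda(\beta)=2R''(\beta)=2\lambda\,R(\beta)(P_2-P_1)A'_\lambda(\beta),
\]
which is the stated expression.

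The argument is essentially bookkeeping on Neumann series and the resolvent identity; the only point requiring a little care is the uniform-in-$\beta$ operator bound $\|R(\beta)\|\le(1-\lambda)^{-1}$, which is what makes the continuity of $R(\beta)$, and therefore the differentiation-under-limit step, legitimate in operator norm. No nontrivial spectral theory is needed beyond the fact that $H_\beta$ is a self-adjoint contraction.
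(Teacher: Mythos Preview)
Your proof is correct, and it takes a cleaner route than the paper's. Both arguments start identically: $H_\beta$ is a reversible Markov kernel, hence a self-adjoint contraction on $L_0^2(\mathsf{X},\mu)$, so the Neumann series converges and gives $\|R(\beta)\|\le(1-\lambda)^{-1}$ uniformly in $\beta$. The divergence is in the derivative computation. The paper does not use the resolvent identity explicitly; instead it writes out $A_\lambda(\beta+h)-A_\lambda(\beta)$ in three pieces, then proves separately that $h^{-1}\big((I-\lambda H_{\beta+h})^{-1}-(I-\lambda H_\beta)^{-1}\big)$ has a limit by an induction on powers showing $\lim_{h\to0+}h^{-1}(H_{\beta+h}^k-H_\beta^k)$ exists for every $k\ge 0$. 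Only after this existence step does the paper extract the formula, by differentiating the algebraic relation $(I-\lambda H_\beta)A_\lambda(\beta)=I+\lambda H_\beta$. Your resolvent identity $R(\beta+h)-R(\beta)=\lambda h\,R(\beta+h)(P_2-P_1)R(\beta)$, combined with the uniform bound, yields existence of the limit and the formula simultaneously in one line; the observation $A_\lambda(\beta)=2R(\beta)-I$ then makes the passage to $A'_\lambda$ and $A''_\lambda$ immediate. Your approach is more economical and avoids the termwise induction; the paper's approach has the minor virtue of never explicitly invoking continuity of $\beta\mapsto R(\beta)$, but that continuity is in any case immediate from the same uniform bound.
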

%

\begin{pf} 
The expression for $A_\lambda(\beta)$ follows by the Neumann series
representation $(I-\lambda H_\beta)^{-1} = \sum_{k=0}^\infty(\lambda
H_\beta)^k$
which is well defined because\break $\| (\lambda H_\beta)^k \| \le\lambda^k$.
Let us check that $\beta\mapsto A_\lambda(\beta)$ is right differentiable
on $[0,1)$.
Write for any $h\in(0,1-\beta)$
\begin{eqnarray*}
A_\lambda(\beta+h)-A_\lambda(\beta) 
&=&
\lambda h (I-\lambda H_{\beta})^{-1}(P_2-P_1)
+ \Delta_{\lambda,\beta,h}(I + \lambda H_{\beta})
\\
&&{} + \lambda h\Delta_{\lambda,\beta,h}(P_2-P_1),
\end{eqnarray*}
where $\Delta_{\lambda,\beta,h} = (I-\lambda
H_{\beta+h})^{-1}-(I-\lambda H_\beta)^{-1}$.
The differentiability follows as soon as we show $\lim_{h\to0+}
h^{-1} (\Delta_{\lambda,\beta,h})$ exists.
By the Neumann series representation, it is sufficient to show that
$\lim_{h\to0+} h^{-1} (H_{\beta+h}^k - H_\beta^k )$ exists for all
$k\ge0$. The claim is trivial with $k=0$,
and the cases $k\ge1$ follow inductively by writing
\begin{eqnarray*}
H_{\beta+h}^k - H_\beta^k
&=& h H_\beta^{k-1} (P_2-P_1)
+ \bigl(H_{\beta+h}^{k-1}-H_\beta^{k-1}\bigr)
H_{\beta}
\\
&&{}+ h \bigl(H_{\beta+h}^{k-1}-H_\beta^{k-1}
\bigr) (P_2-P_1).
\end{eqnarray*}

Because $(I-\lambda H_\beta) A_\lambda(\beta) = I +
\lambda H_\beta$, we may write
\begin{eqnarray*}
\lambda h (P_2-P_1) 
&= (I-\lambda
H_{\beta+h}) \bigl(A_\lambda(\beta+h)-A_\lambda(\beta)
\bigr) - \lambda h (P_2-P_1) A_\lambda(\beta),
\end{eqnarray*}
from which, multiplying with $h^{-1}$ and taking limit as $h\to0+$, we
obtain
%
\begin{equation}
\lambda(P_2-P_1) = (I-\lambda H_\beta)
A'_\lambda(\beta) - \lambda(P_2-P_1)
A_\lambda(\beta). \label{eq:expr-aprime}
\end{equation}
The desired expression for $A'_\lambda(\beta)$ follows by observing that
$I+A_\lambda(\beta) = 2 (I-\lambda H_\beta)^{-1}$.
Consider then $A''_\lambda(\beta)$. From \eqref{eq:expr-aprime}, we obtain
\begin{eqnarray*}
&&(I-\lambda H_\beta)h^{-1} \bigl(A'_\lambda(
\beta+h)-A'_\lambda(\beta) \bigr)
\\
&&\qquad= \lambda(P_2-P_1) A'_\lambda(
\beta+h) + \lambda(P_2-P_1) h^{-1}
\bigl(A_\lambda(\beta+h)-A_\lambda(\beta ) \bigr).
\end{eqnarray*}
We conclude by taking limits as $h\to0+$.
\end{pf}
%

%
\begin{lemma}
\label{lem:asvar-expressions} 
Suppose $\Pi$ is a Markov kernel reversible with respect to $\mu$, and
$(X_n)_{n\ge0}$ is a Markov chain corresponding to the transition $\Pi
$ with
$X_0\sim\mu$. Then, for a function $f\in L_0^2(\mathsf{X},\mu)$
%
\begin{equation}
\var(f,\Pi) =\lim_{n\to\infty} \frac{1}{n} \E \Biggl(\sum
_{i=1}^n f(X_i)
\Biggr)^2 = \int\frac{1+x}{1-x} e_{f,\Pi}(\ud x) \in[0,
\infty], \label{eq:asymptotic-variance}
\end{equation}
where $e_{f,\Pi}$ is a positive measure on
$S\subset[-1,1]$ satisfying $e_{f,\Pi}(S)=\|f\|_\mu^2$.

For any $f\in L_0^2(\mathsf{X},\mu)$, whenever the series below is
convergent, then the following equality holds:
%
\begin{equation}
\var_\mu(f) + 2 \sum_{k=1}^\infty
\E\bigl[f(X_0)f(X_k)\bigr] = \var(f,\Pi) < \infty.
\label{eq:int-autocorr-eq-asvar}
\end{equation}
Moreover,
\[
\var_\lambda(f,\Pi) \defeq \bigl\langle f, (I-\lambda\Pi)^{-1}
(I+\lambda\Pi) f \bigr\rangle_{\mu}\in[0,\infty)
\]
is well defined for all $\lambda\in[0,1)$ and satisfies
$\lim_{\lambda\to1-} \var_\lambda(f,\Pi) = \var(f,\Pi)$ and
$ \langle f, (I-\lambda\Pi)^{-1} f  \rangle \ge0$.
\end{lemma}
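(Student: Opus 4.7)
The plan is to rely on the spectral theorem for the bounded self-adjoint operator $\Pi$ acting on $L_0^2(\mathsf{X},\mu)$. Reversibility makes $\Pi$ self-adjoint with respect to $\inner{\uarg}{\uarg}_\mu$, and since $\Pi$ is a Markov operator, its restriction to the mean-zero subspace has operator norm at most one, so $\sigma(\Pi)\subset [-1,1]$. The spectral theorem (see e.g.\ \cite{reed-simon}) supplies a projection-valued measure $E$ on $[-1,1]$ with $\Pi=\int x\,E(\ud x)$, and the nonnegative measure $e_{f,\Pi}(A)\defeq \inner{f}{E(A)f}_\mu$ has total mass $e_{f,\Pi}(S)=\|f\|_\mu^2$ on $S\defeq \sigma(\Pi)$. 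The Borel functional calculus then yields $\inner{f}{g(\Pi)f}_\mu=\int g(x)\,e_{f,\Pi}(\ud x)$ for every bounded Borel $g$.

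With this machinery I would express each of the three quantities of interest as a spectral integral. Stationarity and reversibility give $\E[f(X_0)f(X_k)]=\inner{f}{\Pi^k f}_\mu=\int x^k e_{f,\Pi}(\ud x)$, and expanding the square in \eqref{eq:def-asvar} produces
\[
\frac{1}{n}\E\bigg(\sum_{i=1}^n f(X_i)\bigg)^2
=\int K_n(x)\,e_{f,\Pi}(\ud x),\qquad K_n(x)\defeq \sum_{|k|<n}\Big(1-\tfrac{|k|}{n}\Big)x^{|k|}.
\]
For $\lambda\in[0,1)$, the Neumann series $(I-\lambda\Pi)^{-1}=\sum_{k\ge 0}\lambda^k\Pi^k$ (cf.~Lemma \ref{lem:operator-calculus}) combined with functional calculus yields $\var_\lambda(f,\Pi)=\int\frac{1+\lambda x}{1-\lambda x}e_{f,\Pi}(\ud x)$ and $\inner{f}{(I-\lambda\Pi)^{-1}f}_\mu=\int(1-\lambda x)^{-1}e_{f,\Pi}(\ud x)$; both integrands are bounded and strictly positive on $[-1,1]$ since $|\lambda x|<1$, so these quantities are finite and nonnegative.

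Next I would pass to the limits. For $\lambda\to 1-$ I would split: on $[0,1]$ the integrand $(1+\lambda x)/(1-\lambda x)$ is nondecreasing in $\lambda$, so monotone convergence gives convergence to $\int_{[0,1]}\frac{1+x}{1-x}e_{f,\Pi}(\ud x)\in[0,\infty]$; on $[-1,0)$ it lies in $[0,1]$ and converges pointwise to $(1+x)/(1-x)$, so dominated convergence completes the passage, yielding $\lim_{\lambda\to 1-}\var_\lambda(f,\Pi)=\int\frac{1+x}{1-x}e_{f,\Pi}(\ud x)$. For $\int K_n\,\ud e_{f,\Pi}$ I would argue analogously: on $[0,1]$ a direct computation gives $K_{n+1}(x)-K_n(x)=\frac{2}{n(n+1)}\sum_{k=1}^{n-1}kx^k+\frac{2}{n+1}x^n\ge 0$, and $K_n(x)\to(1+x)/(1-x)$ by Ces\`aro summability of the geometric series, so monotone convergence applies (with $K_n(1)=n\uparrow\infty$ matching the divergent limit); on $[-1,0]$, summation by parts using the bounded variation of the weights $1-|k|/n$ against the partial sums $\sum_{k=1}^j x^k$ (uniformly bounded on $x\in[-1,0]$) yields a constant $C$ with $|K_n(x)|\le C$, and pointwise convergence plus dominated convergence finish the job. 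This simultaneously establishes the two equalities in \eqref{eq:asymptotic-variance} and the claimed limit $\var_\lambda\to\var$.

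Finally, for \eqref{eq:int-autocorr-eq-asvar} assume $\sum_{k=1}^\infty a_k$ converges, with $a_k\defeq\E[f(X_0)f(X_k)]=\inner{f}{\Pi^k f}_\mu$. Writing
\[
\frac{1}{n}\E\bigg(\sum_{i=1}^n f(X_i)\bigg)^2
=a_0+2\sum_{k=1}^{n-1}a_k-\frac{2}{n}\sum_{k=1}^{n-1}k\,a_k,
\]
summation by parts applied to $\sum_{k=1}^{n-1}k\,a_k=(n-1)S_{n-1}-\sum_{k=1}^{n-2}S_k$ with $S_j\defeq\sum_{k=1}^j a_k$ convergent gives $n^{-1}\sum_{k=1}^{n-1}k\,a_k\to 0$, so the right-hand side tends to $a_0+2\sum_{k\ge 1}a_k$, which thus equals $\var(f,\Pi)<\infty$. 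The principal obstacle is the uniform-in-$n$ bound on $K_n(x)$ for $x$ near $-1$: monotonicity on $[0,1]$ is effortless, but on $[-1,0]$ one must exploit the bounded variation of the weights $(1-|k|/n)$ together with the boundedness of the alternating partial sums via Abel summation to secure a dominating constant, which then unlocks dominated convergence.
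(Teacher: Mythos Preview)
Your proof is correct and follows essentially the same route as the paper: both rely on the spectral theorem for the self-adjoint contraction $\Pi$ on $L_0^2(\mathsf{X},\mu)$, express all quantities as integrals against the spectral measure $e_{f,\Pi}$, and pass to the limit by splitting $[-1,1]$ into the positive and negative halves. The paper is terser, invoking Kronecker's lemma where you work out the monotonicity of $K_n$ on $[0,1]$ and the Abel-summation bound on $[-1,0]$ explicitly, but the underlying mechanism is the same.
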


%
The results in Lemma~\ref{lem:asvar-expressions} are well known;
a full proof is given in
\cite{andrieu-vihola-pseudo-arxiv}.


\section{Lemmas for Section \texorpdfstring{\protect\ref{13131313131131311}}{3}}
\label{13131313131131311-lemmas} 

We include the statement of
\cite{caracciolo-pelissetto-sokal}, Theorem A.2,
for the sake of
self-  containedness.

%
\begin{lemma}
\label{lem:operator-order} 
Let $A$ and $B$ be self-adjoint operators on a Hilbert space
$\mathcal{H}$ satisfying $0 \le \langle f, A f  \rangle
\le \langle f, B f  \rangle$ for
all $f\in\mathcal{H}$, and the inverses $A^{-1}$ and $B^{-1}$ exist.
Then $0 \le \langle f, B^{-1}f  \rangle \le \langle
f, A^{-1} f  \rangle$ for all
$f\in\mathcal{H}$.
\end{lemma}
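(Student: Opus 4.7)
The plan is to reduce the claim to the elementary operator-theoretic fact that, for a self-adjoint operator $T$, the inequality $T\ge I$ implies $T^{-1}\le I$, via a conjugation by $A^{-1/2}$. This is the standard argument for the operator anti-monotonicity of the function $t\mapsto t^{-1}$ on positive self-adjoint operators.

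First I would establish that $A$ and $B$ are bounded positive self-adjoint operators whose spectra are bounded away from $0$. Positivity comes from the hypothesis $0\le \inner{f}{Af}$, and the distance from $0$ comes from the existence of bounded inverses $A^{-1},B^{-1}$. The continuous functional calculus for bounded self-adjoint operators then yields bounded positive square roots $A^{\pm 1/2}$ satisfying $A^{-1/2}AA^{-1/2}=I$ and $A^{1/2}A^{-1/2}=I$, and likewise for $B$.

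Next I would conjugate the assumed inequality $A\le B$ by $A^{-1/2}$. For any $g\in\mathcal{H}$, plugging $f = A^{-1/2}g$ into the hypothesis gives
\[
  \inner{g}{g} = \inner{A^{-1/2}g}{AA^{-1/2}g} \le \inner{A^{-1/2}g}{BA^{-1/2}g} = \inner{g}{Tg},
\]
where $T\defeq A^{-1/2}BA^{-1/2}$. Hence $T\ge I$. Since $T$ is bounded self-adjoint with bounded inverse $T^{-1} = A^{1/2}B^{-1}A^{1/2}$, the spectral theorem gives $\sigma(T)\subset [1,\infty)$, so $\sigma(T^{-1})\subset (0,1]$, and therefore $0\le T^{-1}\le I$.

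Finally I would undo the conjugation. For arbitrary $h\in\mathcal{H}$, substituting $g = A^{-1/2}h$ into the inequality $\inner{g}{T^{-1}g}\le \inner{g}{g}$ and using self-adjointness of $A^{-1/2}$ yields
\[
  \inner{h}{B^{-1}h} \le \inner{h}{A^{-1}h},
\]
with non-negativity of $\inner{h}{B^{-1}h}$ following either from $T^{-1}\ge 0$ via the same substitution or directly from $B\ge 0$ invertible. I do not anticipate a substantive obstacle; the only delicate point is the well-definedness of the square roots as bounded operators, which is guaranteed by the spectra being bounded away from $0$ thanks to the existence of bounded inverses.
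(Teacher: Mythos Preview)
Your argument is correct and is the standard proof of operator anti-monotonicity of $t\mapsto t^{-1}$. The paper, however, does not prove this lemma at all: it merely restates \cite[Theorem A.2]{caracciolo-pelissetto-sokal} for self-containedness, with no accompanying proof. So there is nothing to compare against; you have supplied what the paper omits.

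One small point worth making explicit: you assert that $A$ and $B$ are bounded. This is justified because the hypothesis $\inner{f}{Af}$ is finite for \emph{all} $f\in\mathcal{H}$ forces the domain of the self-adjoint operator $A$ to be the whole space, and then the Hellinger--Toeplitz theorem (or closed graph) gives boundedness; similarly for $B$. With that in hand, the positive square roots $A^{\pm 1/2}$ exist as bounded operators because the spectrum of $A$ lies in $[\|A^{-1}\|^{-1},\|A\|]$, and the rest of your conjugation argument goes through exactly as written.
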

%

%
\begin{lemma}
\label{lemma:gap-vs-accprob} 
Suppose $P$ is a Metropolis--Hastings kernel given in
\eqref{eq:marginal-kernel}, and $\rho(x)$ is
given in \eqref{eq:r-and-rho}. Then the spectral gap
of $P$ defined in \eqref{eq:spectral-gap} satisfies:
%
%
\begin{longlist}[(ii)]
\item[(i)]
for any set $A\in\mathcal{B}(\mathsf{X})$ with $\pi(A)\in(0,1)$,
\[
\Gap(P) \le \bigl(1-\pi(A) \bigr)^{-1} \Bigl(1-\inf
_{x\in A}\rho(x) \Bigr);
\]
\item[(ii)]
if $\pi$ does not have point masses, that is,
$\pi(\{x\})=0$ for all $x\in\mathsf{X}$, then
\[
\Gap(P) \le1- \rho(x)\qquad \mbox{for $\pi$-almost every $x\in\mathsf{X}$.}
\]
\end{longlist}
\end{lemma}
%

\begin{pf} 
We first check (i).
Denote $p=\P(A)\in(0,1)$ and
define $f(x) = a\mathbbm{I} \{x\in A \} - b\mathbbm
{I} \{x\notin A \}$ where
the constants $a,b\in(0,\infty)$ are chosen so that $\pi
(f)=ap-b(1-p)=0$ and
$\pi(f^2)=a^2 p + b^2 (1-p) = 1$.
We may compute
\begin{eqnarray*}
\mathcal{E}_P(f) &=& \frac{1}{2} \int\pi(\ud x) q(x,\ud y)
\min\bigl\{1,r(x,y)\bigr\} \bigl[f(x) - f(y)\bigr]^2
\\
&=& (a+b)^2 \int_A\pi(\ud x) \int
_{A^\complement} q(x,\ud y) \min\bigl\{1,r(x,y)\bigr\}
\\
&\le&(a+b)^2 \int_A \pi(\ud x) \bigl(1-
\rho(x) \bigr) \le(a+b)^2 p \Bigl(1-\inf_{x\in A}
\rho(x) \Bigr).
\end{eqnarray*}
Now, according to our choice of $a$ and $b$,
\[
(a+b)^2 p = \bigl(1-b^2(1-p) \bigr) +
2b^2(1-p) + b^2p = 1 + b^2 =
(1-p)^{-1}. 
\]

Consider then (ii).
The case $\Gap(P)=0$ is trivial, so assume $\Gap(P)>0$ and assume
the claim does not hold. Then there exists an $\varepsilon>0$ and
a set $A\in\mathcal{B}(\mathsf{X})$ with $p\defeq\P(A)\in(0,1)$
such that
$1-\rho(x)\le
\Gap(P)- \varepsilon$ for all $x\in A$. From (i),
$\Gap(P)\le(1-p)^{-1} (\Gap(P)-\varepsilon)$.
Because $\pi$ is not concentrated
on points, we may choose $p$ as small as we want, which leads to a
contradiction.
\end{pf}
%


\section{Lemmas for Sections \texorpdfstring{\protect\ref{141414141414}}{4} and \texorpdfstring{\protect\ref{151515151515}}{5}}
\label{141414141414-lemmas} 

%
\begin{lemma}
\label{lem:coupling-details} 
Suppose $X=(X_1,\ldots,X_n)$ and $Y=(Y_1,\ldots,Y_n)$ are Markov chains
on a common state space $(\mathsf{X},\B(\mathsf{X}))$
with kernels $P$ and $Q$, and initial distributions $\pi$ and
$\varpi$, respectively, which are invariant such that
$\pi P = \pi$ and $\varpi Q = \varpi$.
Then, the distributions of $X$ and $Y$ denoted
as $\mu_X$ and $\mu_Y$ satisfy the following inequality
for any $C\in\B(\mathsf{X})$:
\[
\|\mu_X -\mu_Y \| \le\| \pi- \varpi\| + 2(n-1)\pi
\bigl(C^\complement\bigr) + (n-1) \sup_{x\in C} \bigl\| P(x,\uarg)
- Q(x,\uarg) \bigr\|,
\]
where $\|\mu_X - \mu_Y\| \defeq\sup_{|f|\le1} |\mu_X(f)-\mu_Y(f)|$
denotes the total variation.
\end{lemma}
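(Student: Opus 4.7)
The plan is to bound the total variation by a triangle inequality through an intermediate measure, and then telescope along the time-axis.

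First I would introduce the auxiliary distribution $\mu_{Y'}$ corresponding to a Markov chain with initial distribution $\pi$ (same as $X$) and kernel $Q$ (same as $Y$). Then by triangle inequality
\[
    \|\mu_X - \mu_Y\| \le \|\mu_X - \mu_{Y'}\| + \|\mu_{Y'} - \mu_Y\|.
\]
The second term is easy: both $\mu_{Y'}$ and $\mu_Y$ are generated by the same kernel $Q$ and differ only through the initial distributions $\pi$ and $\varpi$; since applying a common Markov kernel cannot increase total variation (apply it coordinate-wise), we get $\|\mu_{Y'} - \mu_Y\|\le \|\pi-\varpi\|$.

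To handle $\|\mu_X - \mu_{Y'}\|$ I would telescope along the $n-1$ transitions. For $k=0,1,\ldots,n-1$, let $\mu_k$ denote the law of the chain started at $\pi$ that uses $P$ for its first $k$ transitions and $Q$ for its remaining $n-1-k$ transitions, so that $\mu_0 = \mu_{Y'}$ and $\mu_{n-1}=\mu_X$. Then
\[
    \|\mu_X - \mu_{Y'}\| \le \sum_{k=0}^{n-2} \|\mu_{k+1} - \mu_k\|.
\]
Since $\mu_{k+1}$ and $\mu_k$ agree on the first $k+1$ coordinates and on the conditional law of the last $n-k-2$ coordinates given $X_{k+2}$, for any $|f|\le 1$ I can condition on $X_{k+1}$ and use that the contribution of the remaining coordinates is bounded by the one-step kernel discrepancy at $X_{k+1}$. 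Combined with invariance $\pi P^k = \pi$, this gives
\[
    \|\mu_{k+1}-\mu_k\| \le \int \pi(\ud x)\, \|P(x,\uarg) - Q(x,\uarg)\|.
\]

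Splitting the integral on $C$ and $C^\complement$ and using the trivial bound $\|P(x,\uarg) - Q(x,\uarg)\|\le 2$ on the latter yields
\[
   \|\mu_{k+1}-\mu_k\| \le 2\pi(C^\complement) + \sup_{x\in C} \|P(x,\uarg) - Q(x,\uarg)\|,
\]
and summing over the $n-1$ telescoping terms gives the stated bound. The only mildly delicate step is the conditional argument that bounds $\|\mu_{k+1}-\mu_k\|$ by the one-step kernel discrepancy; this essentially amounts to writing out the joint density as a product and pulling the $|f|\le 1$ sup inside, which is routine but must be done carefully with the right normalisation of total variation.
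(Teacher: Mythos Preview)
Your proof is correct and follows essentially the same route as the paper: both first separate off the $\|\pi-\varpi\|$ contribution and then telescope along the time axis, switching one transition at a time from $Q$ to $P$ and bounding each increment by the one-step kernel discrepancy at the current state, which is $\pi$-distributed by invariance. The paper phrases the telescoping via partially integrated test functions $g_P^{(1:k)}$ and $g_Q^{(1:k)}$ rather than your hybrid-chain laws $\mu_k$, but unwinding the definitions one sees that $\E[(g_P^{(1:k)}-g_Q^{(1:k)})(X_{1:k})]=\mu_X(A)-\mu_{k-1}(A)$, so the two telescopings are the same.
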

%

\begin{pf} 
Let $A\in\B(\mathsf{X})$. We shall use the shorthand notation
$x=x_{1:n} = (x_1,\ldots,x_n)$
and denote $g_P^{(1:n)}(x) \defeq\mathbbm{I} \{x\in A \}$,
\[
g_P^{(1:k)}(x_{1:k}) \defeq 
\int
P(x_k,\ud x_{k+1}) \cdots\int P(x_{n-1},\ud
x_n) \mathbbm{I} \{x\in A \},\qquad 2\le k\le n-1,
\]
and $g_P^{(1:1)} \defeq g_P^{(1)}$, and define $g_Q^{(\uarg)}$ similarly
using the kernel $Q$.

Note that $g_P^{(\uarg)}$ and $g_Q^{(\uarg)}$
take values between zero and one
and the total variation satisfies
$\| \pi- \varpi\| = 2 \sup_{0\le f\le1} |\pi(f)-\varpi(f)| = 2
\sup_{A\in\B(\mathsf{X})} |\pi(A)-\varpi(A)|$.
\begin{eqnarray*}
\bigl| \mu_X(A)-\mu_Y(A)\bigr | &= &\bigl|\pi\bigl(g_P^{(1)}
\bigr) - \varpi\bigl(g_Q^{(1)}\bigr)\bigr|
\\
&\le&\bigl|\pi\bigl(g_Q^{(1)}\bigr)-\varpi\bigl(g_Q^{(1)}
\bigr)\bigr| + \bigl|\pi\bigl(g_P^{(1)} - g_Q^{(1)}
\bigr)\bigr|
\\
&\le&\frac{1}{2}\| \pi- \varpi\| +\bigl |\pi\bigl(g_P^{(1)}
- g_Q^{(1)}\bigr)\bigr|,
\end{eqnarray*}
showing the claim for $n=1$. Assume then $n\ge2$ and observe that we
can write
$|\pi(g_P^{(1)} - g_Q^{(1)})| = |\E[g_P^{(1)}(X_1) - g_Q^{(1)}(X_1)]|$.
We may continue inductively
\begin{eqnarray*}
&& \bigl|\E\bigl[ \bigl(g_P^{(1:n-1)} - g_Q^{(1:n-1)}
\bigr) (X_{1:n-1})\bigr] \bigr|
\\
&&\qquad\le \bigl| \E\bigl[ \bigl(g_P^{(1:n)} - g_Q^{(1:n)}
\bigr) (X_{1:n})\bigr] \bigr| + \biggl|\E \biggl[ \int \Delta(X_{n-1},\ud
x_n) g_Q^{(1:n)}(X_{1:n-1},x_n)
\biggr] \biggr|,
\end{eqnarray*}
where $\Delta(x,\ud y) \defeq P(x,\ud y) - Q(x,\ud y)$, and observe that
\begin{eqnarray*}
&&\biggl|\E \biggl[ \int\Delta(X_{n-1},\ud x_n)
g_Q^{(1:n)}(X_{1:n-1},x_n) \biggr] \biggr|
\\
&&\qquad \le\P(X_{n-1}\notin C) + \sup_{x_{1:n-2}\in\mathcal{X}^{n-2}} \sup
_{x_{n-1}\in C} \biggl| \int\Delta(x_{n-1},\ud x_n)
g_Q^{(1:n)}(x_{1:n})\biggr |
\\
&&\qquad\le\pi\bigl(C^\complement\bigr) + \frac{1}{2} \sup
_{x\in C} \bigl\| P(x,\uarg)-Q(x,\uarg) \bigr\|,
\end{eqnarray*}
because $|\int\Delta(X_{n-1},\ud x_n)
g_Q^{(1:n)}(X_{1:n-1},x_n)|\le1$ and $0\le g_Q^{(1:n)}\le1$.
\end{pf}
%

%
\begin{lemma}
\label{lem:imh-subgeom-generic} 
Assume $q\gg\pi$ and denote $\mu(x)\defeq\pi(\ud x)/ q(\ud x)$.
Suppose that there exists a strictly increasing
$\phi\dvtx(0,\infty)\to[1,\infty)$ with
$\liminf_{t\to\infty} \phi(t)/\break t>0$, such that
%
\begin{equation}
\int\pi(\ud x) \phi \bigl(\mu(x) \bigr) <\infty. \label{eq:imh-bound-generic}
\end{equation}
Then, there exist constants $M,c,\varepsilon\in(0,\infty)$
and a probability measure $\nu$ on
$ (\mathsf{X},\mathcal{B}(\mathsf{X}) )$
such that for the independent Metropolis--Hastings $P$,
%
\begin{eqnarray}
P V(x) &\le& V(x) -c V(x)/\phi^{-1} \bigl(V(x) \bigr)\qquad \mbox{if }\mu(x)>
M,\label{eq:imh-drift-generic}
\\
P(x; \uarg) &\ge&\varepsilon\nu(\uarg)\qquad \mbox{if } \mu(x) \le M, \label{eq:imh-mino-generic}
\end{eqnarray}
and $\nu(V)<\infty$,
where $V(x)\defeq\phi (\mu(x) )$.
\end{lemma}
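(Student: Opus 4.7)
The plan is to exploit two features specific to the IMH: the acceptance ratio reduces to $r(x,y) = \mu(y)/\mu(x)$, and the identity $\mu(y) q(\ud y) = \pi(\ud y)$ lets me trade $q$-integrals weighted by $\mu$ for $\pi$-integrals at the cost of a factor $1/\mu(x)$. I would start from
\[
    PV(x) - V(x) = \int \min\{1,\mu(y)/\mu(x)\}\bigl[\phi(\mu(y))-\phi(\mu(x))\bigr] q(\ud y),
\]
and split the integration domain into $A_x \defeq \{y:\mu(y) < \mu(x)\}$ and $A_x^c$. On $A_x$ the minimum equals $\mu(y)/\mu(x)$, so after applying $\mu(y) q(\ud y) = \pi(\ud y)$ the contribution becomes $-\mu(x)^{-1}\int_{A_x}[\phi(\mu(x))-\phi(\mu(y))]\pi(\ud y)$. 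On $A_x^c$ the minimum is one and $\mu(y) \ge \mu(x)$, so the contribution is nonnegative and bounded by
\[
    \int_{A_x^c} \phi(\mu(y)) q(\ud y) \le \frac{1}{\mu(x)} \int_{A_x^c} \phi(\mu(y)) \pi(\ud y) \le \frac{C}{\mu(x)},
\]
where $C \defeq \int \phi(\mu(y))\pi(\ud y) <\infty$ by \eqref{eq:imh-bound-generic}.

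To lower bound the negative contribution I would first pick $M_0<\infty$ with $\pi(\{\mu\le M_0\}) \ge 3/4$; this is possible because $\mu$ is $\pi$-a.s.\ finite. For $x$ with $\mu(x)$ large enough that $\mu(x) > M_0$ and $\phi(\mu(x)) \ge 2\phi(M_0)$ (available since $\phi\to\infty$ by the $\liminf \phi(t)/t>0$ assumption), the inclusion $\{\mu\le M_0\} \subset A_x$ together with $\phi(\mu(x))-\phi(\mu(y)) \ge \phi(\mu(x))/2$ on $\{\mu\le M_0\}$ give
\[
    \frac{1}{\mu(x)} \int_{A_x} [\phi(\mu(x))-\phi(\mu(y))]\pi(\ud y) \ge \frac{3\,\phi(\mu(x))}{8\,\mu(x)}.
\]
Combining this with $C/\mu(x)$ on the positive term and enlarging $M$ further so that $\phi(\mu(x)) \ge 16C/3$ whenever $\mu(x)>M$ yields \eqref{eq:imh-drift-generic} with $c = 3/16$, after rewriting $\phi(\mu(x))/\mu(x) = V(x)/\phi^{-1}(V(x))$.

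For the minorization, observe that if $\mu(x) \le M$ then $\min\{1,\mu(y)/\mu(x)\} \ge M^{-1}\min\{M,\mu(y)\}$, so setting
\[
    \nu(\ud y) \defeq Z^{-1}\min\{M,\mu(y)\} q(\ud y),\qquad Z \defeq \int \min\{M,\mu(y)\}q(\ud y)\in(0,1],
\]
(with $Z>0$ because $\int\mu\,q = 1$ and $Z\le 1$ because $\min\{M,\mu\}\le\mu$) gives $P(x,\uarg)\ge (Z/M)\nu(\uarg)$, i.e.~\eqref{eq:imh-mino-generic} with $\epsilon=Z/M$. Finiteness of $\nu(V)$ follows by splitting: on $\{\mu\le 1\}$ one has $\phi(\mu)\min\{M,\mu\}\le M\phi(1)$ by monotonicity of $\phi$, and on $\{\mu>1\}$ one has $\min\{M,\mu\}/\mu \le 1$, so $\phi(\mu)\min\{M,\mu\}\,q(\ud y) \le \phi(\mu)\,\pi(\ud y)$, giving $\nu(V) \le Z^{-1}[M\phi(1) + C]$.

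The main obstacle is the bookkeeping in the drift step: the ``good'' negative term is of order $\phi(\mu(x))/\mu(x)$ while the ``bad'' positive term is only of order $1/\mu(x)$, so dominance hinges on $\phi\to\infty$ and one must choose $M$ large enough to simultaneously force $\{\mu\le M_0\}\subset A_x$, make $\phi(M_0)$ small relative to $\phi(\mu(x))$, and absorb $C$. Once those three thresholds are synchronized, everything else is routine.
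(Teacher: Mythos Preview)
Your proof is correct and follows essentially the same route as the paper's: both decompose according to whether $\mu(y)$ lies above or below $\mu(x)$, convert $q$-integrals to $\pi$-integrals via $\mu(y)\,q(\ud y)=\pi(\ud y)$, and observe that the ``bad'' contribution from the acceptance region is $O(1/\mu(x))$ by \eqref{eq:imh-bound-generic} while the ``good'' drift is of order $V(x)/\mu(x)$. Your minorization measure $\min\{M,\mu(y)\}\,q(\ud y)$ coincides (after a change of variables) with the paper's $\min\{1/M,1/\mu(y)\}\,\pi(\ud y)$; the only cosmetic difference is that you work with $PV-V$ and track explicit constants, whereas the paper expands $PV$ and argues asymptotically via $\pi(R_x)\to 1$.
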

%

\begin{pf} 
Denote $A_{x} \defeq \{y \in\mathsf{X}\given
\frac{\mu(y)}{\mu(x)}\ge1 \}$ and $R_{x}\defeq
A_{x}^\complement$ and write
\begin{eqnarray*}
P V(x) & =& \int_{A_{x}}\frac{V(y)}{\mu(y)} \pi(\ud y) +\int
_{R_{x}}\frac{V(y)}{\mu(x)}\pi(\ud y) +V(x,w)\int
_{R_{x}} \biggl(1- \frac{\mu(y)}{\mu(x)} \biggr) q(\ud y)
\\
&\le&\frac{1}{\mu(x)} \int\pi(\ud y) V(y) +V(x) \biggl(1-\frac{\pi(R_{x})}{\phi^{-1} (V(x) )}
\biggr),
\end{eqnarray*}
because $\mu(y)\ge\mu(x)$ on $A_{x,w}$.
The first term on the right
vanishes and $\pi(R_{x})\to1$
as $\mu(x)\to\infty$, and $\liminf_{u\to\infty} u/\phi^{-1}(u) > 0$,
implying \eqref{eq:imh-drift-generic}. For \eqref{eq:imh-mino-generic},
observe that for $\mu(x)\le M$,
\[
P(x, B) \ge\int_B \min \biggl\{\frac{1}{M},
\frac{1}{\mu(y)} \biggr\} \pi(\ud y) \eqdef\tilde{\nu}(B),
\]
and we can take $\varepsilon= \tilde{\nu}(\mathsf{X})$
and $\nu= \varepsilon^{-1} \tilde{\nu}$,
for which \eqref{eq:imh-bound-generic} implies $\nu(V)<\infty$.
\end{pf}
%


\section{Lemmas for Section \texorpdfstring{\protect\ref{171717171717}}{7}}
\label{171717171717-lemmas} 

We denote by $n(x) \defeq x/|x|$ the unit vector pointing in the
direction of $x\neq0$ and by $B(x,r)\defeq\{y\in\R^d\dvtx|x-y|\le
r\}$
the (closed) Euclidean ball.

%
\begin{lemma}
\label{lem:annulus-containment} 
Assume $\pi$ satisfies Condition \ref{a:super-exp-regular}, and that
$c\dvtx\mathsf{X}\to[1,\infty)$ satisfies $\limsup_{|x|\to\infty}
c(x)e^{-|x|}<\infty$. Then, there exist constants $M,b\in[1,\infty)$ such
that for all $|x|\ge M$,
\[
D_x \defeq \biggl\{y\in\R^d\dvtx\frac{1}{c(x)} \le
\frac{\pi(y)}{\pi(x)} \le c(x) \biggr\} \subset B\bigl(0,b|x|\bigr)\setminus B
\bigl(0,b^{-1}|x|\bigr).
\]
\end{lemma}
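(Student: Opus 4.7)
The plan is to exploit the interplay of super-exponential decay and regular contours, together with the bound $c(x) = O(e^{|x|})$. First, super-exponential decay yields, for any $K > 1$, a radius $R_K$ with $n(z) \cdot \nabla \log \pi(z) \le -K$ for $|z| \ge R_K$; radial integration then gives $\log \pi(su) - \log \pi(ru) \ge K(r-s)$ for any $u \in S^{d-1}$ and $r \ge s \ge R_K$, so the map $r \mapsto \pi(ru)$ is strictly decreasing on $[R_K, \infty)$. Regular contours give $\epsilon \in (0,1)$ and $R_\epsilon$ such that $|n(z) \cdot \nabla \pi(z)| \ge \epsilon |\nabla \pi(z)|$ for $|z| \ge R_\epsilon$. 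Set $R \defeq R_K \vee R_\epsilon$. For $|x|$ sufficiently large that $\pi(x) < \min_{|z|=R}\pi(z)$, each ray $\{ru : r \ge R\}$ meets the level set $\{\pi = \pi(x)\}$ at a unique radius $r^*(u)$, and $r^*(n(x)) = |x|$; similarly, $\{\pi = c(x)\pi(x)\}$ is parametrised by $u \mapsto r^{**}(u) u$, well-defined for $|x|$ large since $c(x)\pi(x) \to 0$ (super-exponential decay dominating $e^{|x|}$).

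Next, I establish an aspect-ratio bound for $r^*$. Differentiating $\pi(r^*(u) u) = \pi(x)$ in a unit tangent direction $\dot u \in T_u S^{d-1}$ gives $(u \cdot \nabla \pi)\, \dot r^* + r^*\, (\dot u \cdot \nabla \pi) = 0$, hence
\[
   \frac{|\dot r^*(u)|}{r^*(u)}
   \le \frac{|(\nabla\pi)_{\mathrm{tan}}|}{|u \cdot \nabla \pi|}
   \le \frac{\sqrt{1-\epsilon^2}}{\epsilon} \eqdef C_\epsilon,
\]
by regular contours. Integrating $|d\log r^*/du| \le C_\epsilon$ along any geodesic on $S^{d-1}$ (of length at most $\pi$) yields $r^*(u_1)/r^*(u_2) \le e^{\pi C_\epsilon} \eqdef B$ for all $u_1, u_2 \in S^{d-1}$; in particular, $r^*(u) \in [|x|/B, B|x|]$. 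The same bound applies to $r^{**}$.

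Finally, I combine these bounds. Any $y \in D_x$ satisfies $\log \pi(x) - |x| - C_0 \le \log \pi(y) \le \log \pi(x) + |x| + C_0$ for a constant $C_0$ arising from $c(x) = O(e^{|x|})$. For the outer bound, if $|y| \ge r^*(n(y))$, radial integration along the ray $n(y)$ (using $\log \pi(r^*(n(y)) n(y)) = \log \pi(x)$) gives $K(|y| - r^*(n(y))) \le |x| + C_0$, whence $|y| \le B|x| + (|x|+C_0)/K$; otherwise $|y| < r^*(n(y)) \le B|x|$. For the inner bound, $\pi(y) \le c(x)\pi(x)$ and the radial monotonicity on $[R,\infty)$ force $|y| \ge r^{**}(n(y))$; integrating inward from $x$ along $n(x)$ gives $r^{**}(n(x)) \ge (1-1/K)|x| - C_0/K$, so $|y| \ge B^{-1}\big((1-1/K)|x| - C_0/K\big)$. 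The case $|y| < R$ is ruled out for $|x|$ large, since $c(x)\pi(x) < \min_{|z|\le R}\pi(z)$ eventually. Choosing $K$ and $M$ sufficiently large yields the lemma with, say, $b = 2B + 2$. The main obstacle is the aspect-ratio bound, where regular contours is essential: super-exponential decay alone (e.g.\ $e^{-x_1^2 - x_2^4}$) permits unbounded level-set eccentricity, under which the containment would fail.
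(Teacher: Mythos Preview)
Your argument is correct and follows the same two-step skeleton as the paper: (i) a radial estimate from super-exponential decay, and (ii) an aspect-ratio bound on level sets from regular contours, then combined. The substantive difference is that the paper outsources step (ii) to \cite[Lemma 22]{saksman-vihola}, which delivers $\{y:\pi(y)=\pi(x)\}\subset B(0,a|x|)\setminus B(0,a^{-1}|x|)$ with $a=\exp(2\pi\tan(\arccos(\beta_\pi)))$, whereas you derive it directly via the differential inequality $|d\log r^*/du|\le \sqrt{1-\epsilon^2}/\epsilon$ on the radial parametrisation, giving $B=\exp(\pi\tan(\arccos(\epsilon)))$. Your derivation is self-contained and yields a slightly sharper constant; the paper's version is shorter on the page but relies on an external reference. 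Your combination step is also organised differently: rather than locating, for each $y\in D_x$, a point $\lambda x$ on the same level set with $|\lambda-1|<1/2$ and then applying the aspect-ratio bound at $\lambda x$ (the paper's route), you parametrise the two bounding level sets $\{\pi=\pi(x)\}$ and $\{\pi=c(x)\pi(x)\}$ separately by $r^*$ and $r^{**}$ and sandwich $|y|$ between them. Both arrangements are equivalent. Two minor points worth tightening: you should write $\min_{|z|\le R}\pi(z)$ rather than $\min_{|z|=R}\pi(z)$ to exclude interior points, and the differentiability of $r^*$ used in your ODE step follows from the implicit function theorem since regular contours force $u\cdot\nabla\pi(r^*(u)u)\neq 0$; this is implicit in your setup but deserves a word.
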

%

\begin{pf} 
Let $c'> \limsup_{|x|\to\infty} c(x)e^{-|x|}$.
Choose any $C\in(4c',\infty)$ and let
$M_0\in[1\vee\log c',\infty)$ be sufficiently large so that
there exists
a $\beta_\pi\in(0,1]$ such that for all $|x|\ge M_0$,
\[
c(x) \le c' e^{|x|},\qquad n(x) \cdot\nabla\log \pi(x) \le-C
\quad\mbox{and}\quad n(x) \cdot n\bigl(\nabla\pi(x)\bigr) < -\beta_\pi.
\]
Let $\delta\in(0,1)$. Then for any $|x|\ge M_0(1 - \delta)^{-1}$ and all
$z = t n(x)$ with
$|t|\le\delta$, we have
%
\begin{equation}
\biggl\llvert \log\frac{\pi(x+z)}{\pi(x)}\biggr\rrvert = |t|\int_0^{1}
\bigl|n(x+\lambda z) \cdot\nabla\log\pi (x + \lambda z)\bigr| \,\ud\lambda
\ge C |t|. \label{eq:gradient-stuff}
\end{equation}
Now, if $|x|> aM_0$ where $a \defeq\exp(2\pi\tan(\arccos(\beta
_\pi)))$,
then \cite{saksman-vihola}, Lemma~22, implies
%
\begin{equation}
\bigl\{y\in\R^d\dvtx\pi(y)=\pi(x)\bigr\} \subset B\bigl(0,a|x|\bigr)\setminus
B\bigl(0,a^{-1}|x|\bigr). \label{eq:contour-containment}
\end{equation}
Take any $M > 4a M_0$, and choose $|x|\ge M$. Then,  condition
\eqref{eq:gradient-stuff} implies that any $z = \lambda x\in D_x$,
where $\lambda>0$ satisfies
\[
\bigl|(\lambda- 1)|x| \bigr|\le C^{-1}\log c(x) \le C^{-1} \bigl(\log
\bigl(c'\bigr) + |x|\bigr) \le2 C^{-1} |x|.
\]
We deduce that $|\lambda-1|< 1/2$. Again, using
\eqref{eq:contour-containment}, we deduce that the claim holds with
$b=2a$.
\end{pf}
%

%
\begin{lemma}
\label{lem:containment-and-delta-acc} 
Assume $\pi$ satisfies Condition \ref{a:super-exp-regular}.
\begin{longlist}[(iii)]
\item[(i)]
Then, for any constant $\nu\in(0,\infty)$, there exists
a constant $b_\nu\in[1,\infty)$ such that for all $x\in\mathsf{X}$,
$ \{x+z\given\frac{\pi(x+z)}{\pi(x)}\ge\nu \}
\subset B (0,b_\nu(|x|\vee1) )$.
Assume also that $q$ satisfies Condition \ref{a:super-exp-regular}.
\item[(ii)]
There exists a constant $\nu\in(0,\infty)$ such that $\inf_{x\in
\mathsf{X}}
q ( \{z\given\frac{\pi(x+z)}{\pi(x)}\ge\nu \} )>0$.
\item[(iii)]
For any constant $\nu\in(0,\infty)$,
there exists a constant $M=M(\nu)\in[1,\infty)$ such that
$\inf_{|x|\ge M} q ( \{z\given\frac{\pi(x+z)}{\pi(x)}\ge
\nu
\} )>0$.
\end{longlist}
\end{lemma}
%

\begin{pf} 
Consider first (i).
The existence of such a finite
constant follows for $x$ in compact sets by the continuity of $\pi$
and in the tails by Lemma~\ref{lem:annulus-containment}.

The claim (ii) follows on compact sets by the
continuity of $\log\pi$, and in the tails
as in \cite{jarner-hansen}, proof of Theorem~4.3; the last claim
(iii) follows similarly.
\end{pf}
%

When the target and the proposal distributions satisfy also
Condition \ref{cond:strong-super-exp-regular}, we have
a decay rate for $q(D_x)$.

\begin{lemma}
\label{lem:q-d-x} 
Assume Condition \ref{cond:strong-super-exp-regular}, and assume
$\limsup_{|x|\to\infty} c(x) e^{-|x|} < \infty$. Then, for any
$\varepsilon'>0$ there exists a constant $M_0\in[M,\infty)$ such
that for
all $|x|\ge M_0$,
\[
q (D_x ) \le\varepsilon' \frac{\log(c(x))}{|x|^{\rho-1}}\qquad
\mbox{where } D_x\defeq \biggl\{z\in\R^d\given
\frac{1}{c(x)} \le\frac{\pi
(x+z)}{\pi
(x)} \le c(x) \biggr\}.
\]
\end{lemma}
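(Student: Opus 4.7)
The approach is to estimate $q(D_x)$ via a polar decomposition centered at the origin, using the ``thin annulus'' picture of $D_x$ that emerges from the strong super-exponential decay of $\pi$ together with the containment in Lemma \ref{lem:annulus-containment}. First, the hypothesis $\limsup_{|x|\to\infty} c(x) e^{-|x|} < \infty$ allows one to apply Lemma \ref{lem:annulus-containment}, yielding a constant $b \ge 1$ and $M_1 \ge M$ such that $x + D_x \subset B(0,b|x|)\setminus B(0,b^{-1}|x|)$ for all $|x| \ge M_1$. On this annulus, Condition \ref{cond:strong-super-exp-regular} provides the directional gradient bound $|(y/|y|)\cdot \nabla \log \pi(y)| \ge A|y|^{\rho-1}$ for any prescribed $A>0$, once $|x|$ is sufficiently large. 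For each $\theta \in S^{d-1}$, the map $R \mapsto \log \pi(R\theta)$ is therefore strictly monotone on $[b^{-1}|x|, b|x|]$, so the set $\{R > 0 : R\theta \in x + D_x\}$ is a (possibly empty) interval $[R_+^*(\theta), R_-^*(\theta)]$; by the mean value theorem and the directional gradient bound,
\[
    R_-^*(\theta) - R_+^*(\theta) \le T := \frac{2 b^{\rho-1} \log c(x)}{A |x|^{\rho-1}}.
\]

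Next, writing the integral in polar coordinates and exploiting the monotonicity of $\bar{q}$,
\[
    q(D_x) \le \int_{S^{d-1}}\int_{R_+^*(\theta)}^{R_-^*(\theta)} \bar{q}(|R\theta - x|)\,R^{d-1}\,dR\,d\sigma(\theta)
    \le (b|x|)^{d-1}\,T\int_{S^{d-1}} \bar{q}(d(\theta))\,d\sigma(\theta),
\]
where $d(\theta) := \min\{|R\theta - x| : R \in [b^{-1}|x|, b|x|]\}$. Minimising the quadratic $R^2 - 2R|x|\cos\phi + |x|^2$, with $\phi := \angle(\theta, x/|x|)$, separately in the cases $\cos\phi \ge b^{-1}$ and $\cos\phi < b^{-1}$, yields the geometric lower bound $d(\theta) \ge c_0|x|\sin\phi$ with $c_0 := \sqrt{1 - b^{-2}} > 0$.

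It remains to bound $\int_{S^{d-1}} \bar{q}(c_0|x|\sin\phi)\,d\sigma(\theta) \le C/|x|^{d-1}$. Parametrising $S^{d-1}$ by the polar angle $\phi$ from the pole $x/|x|$ and using the substitution $u = c_0|x|\sin\phi$, the spherical Jacobian $\sin^{d-2}\phi$ produces a factor $(u/(c_0|x|))^{d-2}$ together with an integrable singularity $1/\sqrt{1 - (u/(c_0|x|))^2}$ at $\phi = \pi/2$. Splitting the $\phi$-range into a bulk part $[0,\pi/2-\epsilon]$ (controlled by $\int_0^\infty \bar{q}(u)\,u^{d-2}\,du$, which is finite since $\bar{q}$ is bounded near the origin by differentiability, while $\bar{q}(u) u^{d-2} \le \bar{q}(u) u^{d-1}$ for $u \ge 1$) and a boundary part $[\pi/2-\epsilon, \pi/2]$ (controlled by the decay $\bar{q}(c|x|) = o(|x|^{-d})$, itself a consequence of monotonicity and $\int \bar{q}(|z|)\,dz < \infty$) delivers the desired $O(|x|^{-(d-1)})$ bound. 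Combining all estimates, $q(D_x) \le (C''/A)\,\log c(x)/|x|^{\rho-1}$, and choosing $A$ large enough so that $C''/A \le \epsilon'$, at the cost of increasing the threshold $M_0$, completes the proof. The main technical hurdle is this last spherical-integral estimate: one must reconcile the merely monotone and integrable nature of $\bar{q}$ with the vanishing spherical volume element near the pole while keeping the constant uniform in $x$.
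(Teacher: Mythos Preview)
Your proposal is correct but takes a genuinely different route from the paper's proof. The paper establishes the same radial-thinness estimate $\delta_x \asymp \log c(x)/|x|^{\rho-1}$, but then encloses $x+D_x$ in a \emph{normal} tube $C_{\pi(x)}(\delta_x) = \{y + t\,n(y) : y\in S_{\pi(x)},\, |t|\le \delta_x\}$ around the level set $S_{\pi(x)}$, rather than working with the radial-shell picture you use. It then rewrites $q(D_x)$ via a layer-cake identity involving $|\bar{q}'|$,
\[
    q(D_x) \le \int_0^\infty \mathcal{L}^d\big(B(x,u)\cap C_{\pi(x)}(\delta_x)\big)\,|\bar{q}'(u)|\,\ud u,
\]
and invokes a volume estimate from \cite[proof of Theorem~4.1]{jarner-hansen} for $\mathcal{L}^d\big(B(x,u)\cap C_{\pi(x)}(\delta_x)\big)$ to conclude.

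Your approach replaces this combination of normal-tube geometry plus the Jarner--Hansen volume bound by a direct polar integration centered at the origin, reducing everything to the spherical integral $\int_{S^{d-1}} \bar{q}(c_0|x|\sin\phi)\,\ud\sigma(\theta)$. This is more self-contained (no external volume estimate needed) and uses only boundedness, monotonicity and radial integrability of $\bar{q}$, whereas the paper's layer-cake step genuinely uses differentiability through $|\bar{q}'|$. On the other hand, the paper's contour-tube description is arguably cleaner geometrically and avoids the somewhat delicate splitting of your spherical integral near $\phi=\pi/2$. Both arguments are of comparable length; yours has the minor advantage of being fully self-contained.
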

%

\begin{pf} 
Lemma~\ref{lem:annulus-containment}
implies
$b\in[1,\infty)$ and $M'\in[1,\infty)$ such that for all $|x|\ge M'$
the annulus $D_x \subset B(0,b|x|)\setminus B(0,b^{-1}|x|)$.
This implies that for any constant $c_\ell\in[1,\infty)$ one can choose
$M_\ell\in[M',\infty)$ such that
\[
n(x+z) \cdot\nabla \log\pi(x+z) \le-c_\ell|x+z|^{\rho-1}
\qquad\mbox{for all } |x|\ge M_\ell, z\in D_x.
\]
Denoting $\ell(x)\defeq\log\pi(x)$, we write
\[
D_x = \bigl\{z\in\R^d\given\bigl|\ell(x+z)-\ell(x)\bigr|\le\log
c(x)\bigr\}.
\]
Define the contour surface set
$S_{\pi(x)} \defeq\{y\in\R^d\given\pi(y)=\pi(x)\}$ and
\[
C_{\pi(x)}(\delta)\defeq \bigl\{y+t n(y)\given y\in S_{\pi(x)},
|t|\le\delta \bigr\}.
\]
We will now check that with our conditions, for
$|x|\ge M_\ell b$,
%
\begin{equation}
D_x+x \subset C_{\pi(x)}(\delta_x) \qquad\mbox{where }
\delta_x \defeq \frac{b^{\rho-1}}{c_\ell} \cdot\frac{\log
c(x)}{|x|^{\rho-1}}.
\label{eq:annulus-containment}
\end{equation}
Because $D_x+x=D_y+y$ whenever $\pi(x)=\pi(y)$,
it is sufficient to consider $z\in D_x$ such that $z = t n(x)$
As in the proof of Lemma~\ref{lem:annulus-containment},
\begin{eqnarray*}
\bigl|\ell (x+z )-\ell(x)\bigr| 
&=& |t|\int
_0^1 \bigl\llvert n(x + \lambda z) \cdot \nabla
\ell (x+\lambda z ) \bigr\rrvert \,\ud\lambda
\\
&\ge&|t| c_\ell|x|^{\rho-1}\int_0^1
\biggl\llvert 1 + \frac{t}{|x|}\biggr\rrvert ^{\rho-1} \,\ud t \ge
c_\ell b^{-(\rho-1)} |x|^{\rho-1} |t|.
\end{eqnarray*}
Now $|\ell (x+z )-\ell(x)|\le\log c(x)$ implies
\eqref{eq:annulus-containment}.

Write then, by Fubini's theorem,
\begin{eqnarray*}
q(D_x) 
&\le&\int_{C_{\pi(x)}(\delta_x)-x} \bar{q}(z) \,\ud
z
\\
&=& \int_0^{\bar{q}(0)} \mathcal{L}^d
\bigl(z\in\R^d\given\bar{q}\bigl(|z|\bigr)\ge t, z\in C_{\pi(x)}(
\delta_x)-x \bigr) \,\ud t
\\
&=& \int_0^\infty \mathcal{L}^d
\bigl(z\in\R^d\given|z|\le u, z\in C_{\pi(x)}(
\delta_x)-x \bigr) \bigl|\bar{q}'(u)\bigr|\,\ud u.
\end{eqnarray*}
Now, \cite{jarner-hansen}, proof of Theorem~4.1, shows that
for $u\le|x|/2$,
\begin{eqnarray*}
\mathcal{L}^d \bigl(C_{\pi(x)}(\delta_x)\cap
B(x,u) \bigr) \le\delta_x \biggl(\frac{|x|+u}{|x|-u}
\biggr)^{d-1} \frac{\mathcal{L}^d (B(x,3u) )}{u} \le3^{2d-1}c_d
\delta_x u^{d-1}, 
\end{eqnarray*}
where $c_d = \mathcal{L}^d(B(0,1))$.
By polar integration,
\begin{eqnarray*}
\mathcal{L}^d \bigl(C_{\pi(x)}(\delta_x) \bigr)
&\le& c_d \sup_{y\in S_{\pi(x)}} \int_{|y|-\delta_x}^{|y|+\delta_x}
r^{d-1} \,\ud r \\
&\le& 2 c_d b^{d-1}
\delta_x |x|^{d-1} \le4 c_d b^{d-1}
\delta_x u^{d-1},
\end{eqnarray*}
where the latter inequality holds for $u\ge|x|/2$.
We obtain
\[
q(D_x) \le c'\delta_x \biggl(1 + \int
_0^\infty u^d \bigl|\hat{q}'(u)\bigr|
\,\ud u \biggr),
\]
and because $\hat{q}$ is monotone decreasing,
integration by substitution yields
\[
\int_0^M u^d \bigl|
\hat{q}'(u)\bigr| \,\ud u = d \int_0^M
u^{d-1} \hat{q}(u) \,\ud u - M^d \hat{q}(M) \le d
c_d^{-1} \int\hat{q}(x) \,\ud x <\infty.
\]
We deduce
$q(D_x)\le c'' \delta_x$ and conclude by choosing $c_\ell$
sufficiently large.
\end{pf}
%
\end{appendix}

\section*{Acknowledgement} 
We thank Anthony Lee for fruitful discussions.



%



\printaddresses
\end{document}